\newtheorem{theorem}{Theorem}[section]
\newtheorem{proposition}[theorem]{Proposition}
\newtheorem{corollary}[theorem]{Corollary}
\newtheorem{lemma}[theorem]{Lemma}
\newtheorem*{theorem*}{Theorem}
\newtheorem*{proposition*}{Proposition}
\newtheorem*{corollary*}{Corollary}
\newtheorem*{lemma*}{Lemma}
\theoremstyle{definition}
\newtheorem{definition}[theorem]{Definition}
\newtheorem{example}[theorem]{Example}
\newtheorem{remark}[theorem]{Remark}
\newtheorem*{remark*}{Remark}
\newtheorem*{definition*}{Definition}
\newcommand{\tensor}[1]{\otimes_{#1}}
\renewcommand{\hom}[3]{\mathrm{Hom}_{#1}(#2,\,#3)}
\newcommand{\Endo}[2]{\mathrm{End}{}_{#1}(#2)}
\newcommand{\bara}[1]{\overline{#1}}
\newcommand{\Aut}[2]{\mathbf{Aut}_{#1}(#2)}
\newcommand{\aut}[1]{\mathbf{Aut}(#1)}
\newcommand{\fk}[1]{\mathfrak{#1}}
\newcommand{\Sf}[1]{\mathsf{#1}}
\newcommand{\lr}[1]{\left(\underset{}{} #1 \right)}
\newcommand{\Scr}[1]{\mathscr{#1}}
\newcommand{\Inv}[2]{\mathbf{Inv}_{#1}(#2)}
\newcommand{\Picar}[1]{\mathbf{Pic}(#1)}
\newcommand{\RPicar}[2]{\mathbf{Pic}_{#1}(#2)}
\newcommand{\objeto}[3]{\xymatrix@C=35pt{ #1
\ar@2{->}|-{\,[#2]\,}[r] & #3}}
\newcommand{\Z}{\mathcal{Z}}
\newcommand{\G}{\mathcal{G}}
\newcommand{\F}{\mathcal{F}}
\newcommand{\U}{\mathcal{U}}
\begin{document}
\allowdisplaybreaks

\title{Invertible unital bimodules over rings with local units, and
related exact sequences of groups, II
\footnote{Research supported by the grant MTM2010-20940-C02-01 from the
Ministerio de Ciencia e Innovaci\'on and from FEDER}}
\author{L. El Kaoutit \\
\normalsize Departamento de \'{A}lgebra \\ \normalsize Facultad de
Educaci\'{o}n y Humanidades \\
\normalsize Universidad de Granada \\ \normalsize El Greco N${}^{0}$
10, E-51002 Ceuta, Espa\~{n}a \\ \normalsize
e-mail:\textsf{kaoutit@ugr.es} \and J. G\'omez-Torrecillas \\
\normalsize Departamento de \'{A}lgebra \\ \normalsize Facultad de
Ciencias \\
\normalsize Universidad
de Granada\\ \normalsize E18071 Granada, Espa\~{n}a \\
\normalsize e-mail: \textsf{gomezj@ugr.es} }

\date{}

\maketitle

\begin{abstract}
Let $R$ be a ring with a  set of local
units, 
and a homomorphism of groups $\underline{\Theta} : \G \to \Picar{R}$ to the
Picard group of $R$. We study under which conditions  $\underline{\Theta}$ is determined by a factor map, and, henceforth, it defines a generalized crossed product with a same set of local units.  Given a ring extension $R \subseteq S$ with the same set of local units and assuming that $\underline{\Theta}$ is induced by a homomorphism of groups $\G \to \Inv{R}{S}$ to the group of all invertible $R$-sub-bimodules of $S$, then we construct an
analogue of the Chase-Harrison-Rosenberg
seven terms exact sequence of groups attached to the triple $(R
\subseteq S, \underline{\Theta})$, which involves the first,
the second and the third cohomology groups of $\G$ with coefficients
in the group of all $R$-bilinear
automorphisms of $R$. Our approach generalizes the works by Kanzaki and Miyashita in the unital case. 
\end{abstract}

\section*{Introduction}

For a  Galois extension $R/\Bbbk$ of commutative rings with identity and  finite Galois group $G$,
Chase, Harrison and Rosenberg,
gave in \cite[Corollary 5.5]{Chase et al:1965} (see also
\cite{Chase/Rosenberg:1965})
the following  seven terms exact sequence of groups
\begin{multline*}
1 \to H^1(G,\U(R)) \to \RPicar{\Bbbk}{\Bbbk} \to \RPicar{R}{R}^G \to
H^2(G, \U(R)) \to B(R/\Bbbk) \\ \to H^1(G, \RPicar{R}{R}) \to
H^3(G,\U(R)),
\end{multline*}
where for a ring $A$ with identity, $\RPicar{A}{A}\,=\,\{ [P] \in
\Picar{A}|\,\,
ap=pa, \forall \, a \in A, p \in P\}$ is a subgroup of the Picard
group $\Picar{A}$, and $\U(A)$ the group of units.
The group $B(R/\Bbbk)$ is the Brauer group of Azumaya
$\Bbbk$-algebras split by $R$,
and the other terms are the usual cohomology groups of $G$ with
coefficients in abelian groups.
As one can realize, the case of finite Galois extension
of fields
reduces to fundamental theorems in Galois cohomology of fields extensions, 
namely, the
Hilbert's 90 Theorem, and the isomorphism of the Brauer group of
a field with the second cohomology group.

The construction of this sequence of groups, as was given in
\cite{Chase et al:1965,Chase/Rosenberg:1965},
is ultimately based on the consideration of a certain spectral 
sequence already introduced by Grothendieck, and the generalized
Amitsur cohomology.
The same sequence was also constructed after that by Kanzaki
\cite[Theorem, p.187]{Kanzaki:1968},
using only elementary methods that employ the novel notion of
generalized crossed products.
This notion has been the key success which allowed  
Miyashita to extend in \cite[Theorem 2.12]{Miyashita:1973} the above
framework to the context of a noncommutative unital
ring extension $R\subseteq S$ with a homomorphism of groups to the
group of
all invertible sub-$R$-bimodules of $S$. In this setting the seven
terms exact
sequence takes of course a slightly different form,
although the first, the fourth and the last terms remain the
cohomology groups of the acting group with
coefficients in the group of units of the base ring.
The commutative Galois case is then recovered by first considering a
tower $\Bbbk \subseteq R \subseteq S$, where
$\Bbbk \subseteq R$ is a finite Galois extension and $S$ is a fixed
crossed product constructed
from the Galois group; secondly by considering the homomorphism of
groups which sends
any element in the Galois group to it corresponding homogeneous
component of $S$
(components which belong to the group of all invertible
sub-$R$-bimodules of $S$).

In this paper we construct an analogue of the above seven terms
exact sequence in the context
of rings with local units (see the definition below). The most
common class of examples are the rings
with enough orthogonal idempotents or Gabriel's rings which are
defined from small additive categories.
It is noteworthy that up to our knowledge
there is no satisfactory notion in the literature of morphism of
rings with local units that
could be, for instance,  extracted from a given additive functor
between additive small categories.
Here we restrict ourselves to the naive case of extensions of the
form $R \subseteq S$ which
have the same set of local units. This at least includes the
situation where two small additive categories
have the same set of objects and differ only in the size of the sets
of morphisms, as it frequently happens
in the theory of localization in abelian categories. Our methods are
inspired from Miyashita's
work \cite{Miyashita:1973} and use the results of our earlier work
\cite{ElKaoutit/Gomez:2010}. The present
paper is in fact a continuation of \cite{ElKaoutit/Gomez:2010}. It
is worth noticing that,
although the steps for constructing the seven terms sequence in our
context are slightly parallel to \cite{Miyashita:1973}, 
this construction is not an easy task since it has its own
challenges and difficulties.

The paper is organized as follows. In Section \ref{sec:1} we give
some preliminary results on similar and
invertible unital bimodules. Most of results on similar unital
bimodules are in fact the non unital version
of \cite{Hirata:1968, Hirata:1969}, except perhaps the construction
of the twisting natural transformations
and its weak associativity (Proposition \ref{prop:1} and
Lemma \ref{lema:3}).
The generalized crossed product is introduced in Section
\ref{sec:2}, where we also give useful
constructions on the normalized $2$ and $3$-cocycles with
coefficient in the unit group of the base ring. 
Section \ref{sec:3} is devoted to the construction of an abelian
group whose elements consist of
isomorphic classes of generalized crossed products. We show
that it contains a subgroup which is
isomorphic to the $2$-cohomology group (Proposition \ref{prop:5}).
Our main results is contained in Section
\ref{sec:4}, which contains a number of exact sequences of groups that culminate in the seven terms exact sequence that generalizes Chase-Harrison-Rosenberg's one (Theorem \ref{them} ).

\medskip
{\textbf{Notations and basic notions}:}
\smallskip

In this paper ring means an associative and not necessarily unital
ring. We denote by $\Scr{Z}(R)$ (resp. $\Scr{Z}(G)$)
the center of a ring $R$ (resp. of a group $G$), and if $R$ is
unital we will denote by $\U(R)$ the unit group of $R$,
that is, the set of all invertible elements of $R$.

\medskip

Let $R$ be a ring and $\Sf{E} \subset R$ a set of idempotent elements;  $R$ is said to be a \emph{ring with set of local units $\Sf{E}$}, provided that  for every finite subset
$\{r_1,\cdots,r_n\} \subset R$,
there exists $ e \in\Sf{E}$ such that
$$
e\,r_i\,\,=\,\, r_i\, e\,\,=\,\, r_i, \quad \text{ for every } \,
i=1,\cdots,n,
$$
In this way, given
a finite set $\{r_1,\cdots,r_n\}$ of elements in $R$,
we denote $$Unit\{r_1,\cdots,r_n\}\,\,=\,\, \left\{\underset{}{} e
\in\Sf{E} \, |
\,\, er_i= r_ie= r_i, \, \text{ for every } \, i=1,2,\cdots,n
\right\}.$$
Observe that our definition differs from that in  \cite[Definition 1.1]{Abrams:1983}, since we do not assume that the idempotents of $\mathsf{E}$ commute. In fact, our rings generalize those of \cite{Abrams:1983} since, when the idempotents are commuting, it is enough to require that
 for each element $r \in R$ there exists $e \in \Sf{E}$ such that $er\,=\, re\,=\, r$  (see  \cite[Lemma 1.2]{Abrams:1983}).  The Rees matrix rings considered in   \cite{Anh/Marki:1983}, are for instance, rings with a set of local units with noncommuting idempotents.

Let $R$ be a ring with a set of local units $\Sf{E}$. A right $R$-module $X$ is said to be \emph{unital} provided one of
the following equivalent conditions holds
\begin{enumerate}[(i)]
\item $X\tensor{R}R \cong X$ via the right $R$-action of $X$,
\item $XR:=\{\sum_{finite} x_ir_i|\, r_i \in R,\, x_i \in X\}=X$,
\item for every element $x \in X$, there exists an element $e \in
\Sf{E}$ such that $xe=x$.
\end{enumerate}
Left unital $R$-modules are analogously defined. 
Obviously any right $R$-module $X$ contains $XR$ as the largest
right unital $R$-submodule.

Let $R$ and $S$ be rings with a fixed set of local units,
respectively, $\Sf{E}$ and $\Sf{E}'$.
In the sequel, an \emph{extension of rings $\phi: (R, \Sf{E}) \to (S,\Sf{E}')$ with the same
set of local units},
stand for an additive and multiplicative map $\phi$ which satisfies
$\phi(\Sf{E})\,=\, \Sf{E}'$.
Note that since $R$ and $S$ have the same set of local units, any
right (resp. left) unital $S$-module
can be considered as right (resp. left) unital $R$-module by
restricting scalars.
Furthermore, for any right $S$-module $X$, we have the equality
$XR=XS$.

\section{Similar and invertible unital bimodules.}\label{sec:1}

Let $R$ be a ring with a set of local units $\mathsf{E}$. 
In the first part of this section we give some preliminary results
on  unital $R$-bimodules,
which we will use frequently in the sequel. Most of them
were already formulated by K. Hirata in \cite{Hirata:1968} and
\cite{Hirata:1969} when $R$ is unital.
For the convenience of the reader we will give in our case complete
proofs.
The second part provides a relation between the automorphism group
of an invertible unital
$R$--bimodule and the automorphisms of the base ring $R$. This result will be also used in the forthcoming
sections.

\smallskip

A \emph{unital $R$-bimodule} is an $R$-bimodule which is left and
right unital.
In the sequel, the expression \emph{$R$-bilinear map} means 
homomorphism of $R$-bimodules. Note that if $M$ is a unital
$R$-bimodule, then for every element $m \in M$, there exists a
\emph{two sided unit} $e$ for $m$. That is,
there exists $e \in  \Sf{E}$ such that  $m\,=\,me\,=em$, see \cite[p. 733]{ElKaoutit:2006}.
If several $R$-bimodules are handled, say $M, N, P$ with a finite
number of elements $\{ m_i, n_i, p_i\}_i$,
$m_i \in M, n_i \in N, p_i \in P$, then we denote by 
$Unit\{m_i, n_i, p_i\}$ the set of common two sided units of the
$m_i$'s, $n_i$'s and the $p_i$'s.
This means, that $e \in Unit\{m_i, n_i, p_i\} \subseteq \Sf{E}$ if
and only if
$$ em_i\,=\,m_i\,=\,m_i e,\,\, en_i\,=\,n_i\,=\,n_i e,\,\,
ep_i\,=\,p_i\,=\,p_i e,
\quad \text{ for every } i=1,\cdots,n.$$

Given a unital $R$-bimodule $M$, we denote its invariant
sub-bimodule,
that is, the  set of all $R$-bilinear maps  from $R$ to $M$, by 
$$ M^R\,\,:=\,\, \hom{R-R}{R}{M}.$$
It is canonically a module over the commutative ring of all
$R$-bimodule endomorphisms $\Z:=\Endo{R-R}{R}$. The $\Z$-action is
given as follows:
for every $z \in \Z$ and $f \in M^R$, we have 
$$ z.f: {}_RR_R \to {}_RM_R, \lr{r \mapsto
f(z(r))=z(e)f(r)=f(r)z(e),\,\, \text{where } e \in Unit\{r\} }.$$

\begin{lemma}\label{lema:1}
Let $R$ be a ring with local units and $M$ a unital $R$-bimodule
such that
$ \xymatrix{  M \ar@ ^{(->}^-{\oplus}[r] & R^{(n)}}$,
that is, $M$ is isomorphic as an $R$-bimodule to a direct summand of
direct sum of $n$ copies of $R$. Then
\begin{enumerate}[(i)]
 \item $M^R$ is a finitely generated and projective $\Z$-module;
\item there   is an isomorphism of $R$-bimodules
$$ M \,\, \cong\,\, R\tensor{\Z} M^R,$$ 
where the $R$-bimodule structure of the right hand term is induced
by that
of $R$. That is, 
$$ r(s\tensor{\Z}f) t \,\,=\,\, (rst)\tensor{\Z} f, \quad
(s\tensor{\Z}f  \in R\tensor{\Z}M^R, \,\, r,t \in R).$$
\end{enumerate}
\end{lemma}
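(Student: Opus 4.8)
The plan is to deduce both statements from the behaviour of a single natural transformation under finite direct sums and direct summands. Set $\Z=\Endo{R-R}{R}$, a commutative \emph{unital} ring whose identity is $\mathrm{id}_R$, and regard $(-)^R=\hom{R-R}{R}{-}$ as an additive functor from unital $R$-bimodules to $\Z$-modules. Since $R^R=\Z$ and $\hom{R-R}{R}{-}$ carries finite direct sums to finite direct sums, we get $(R^{(n)})^R\cong \Z^{(n)}$ as $\Z$-modules. I would then compare $R\tensor{\Z}M^R$ with $M$ by a map that is manifestly an isomorphism on the $R^{(n)}$, and transport this isomorphism along a splitting $M\hookrightarrow R^{(n)}\twoheadrightarrow M$.

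Concretely, I would define for each unital $R$-bimodule $M$ the evaluation map
$$ \mu_M\colon R\tensor{\Z}M^R \longrightarrow M, \qquad r\tensor{\Z}f\longmapsto f(r),$$
where $R$ is viewed as a (unital) $\Z$-module via $r\cdot z=z(r)$. Well-definedness over $\Z$ is the balancing identity $\mu_M(z(r)\tensor{\Z}f)=f(z(r))=(z.f)(r)=\mu_M(r\tensor{\Z}z.f)$, which is exactly the defining formula of the $\Z$-action on $M^R$ recalled before the statement. The map $\mu_M$ is $R$-bilinear because each $f$ is, namely $\mu_M(a(r\tensor{\Z}f)b)=f(arb)=a\,f(r)\,b$, and it is natural in $M$ since $(g\circ f)(r)=g(f(r))$ for any bimodule map $g$; in particular the target of $\mu_M$ carries exactly the $R$-bimodule structure stated in (ii).

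I would first check the base case $M=R$: here $M^R=\Z$ and $\mu_R$ is the canonical map $R\tensor{\Z}\Z\to R$, $r\tensor{\Z}z\mapsto z(r)=r\cdot z$, which is an isomorphism because $R$ is a unital right $\Z$-module (the identity $\mathrm{id}_R\in\Z$ acts as the identity). By additivity of both $R\tensor{\Z}(-)^R$ and the identity functor, $\mu_{R^{(n)}}$ is then an isomorphism for every $n$. For a general $M$ with $R$-bilinear maps $\iota\colon M\to R^{(n)}$ and $\pi\colon R^{(n)}\to M$ satisfying $\pi\iota=\mathrm{id}_M$, applying $(-)^R$ produces $\iota^R\colon M^R\to\Z^{(n)}$ and $\pi^R$ with $\pi^R\iota^R=\mathrm{id}$, exhibiting $M^R$ as a direct summand of the free module $\Z^{(n)}$; this proves (i). For (ii), the two naturality squares attached to $\iota$ and $\pi$ exhibit $\mu_M$ as a retract, in the category of arrows, of the isomorphism $\mu_{R^{(n)}}$, and a retract of an isomorphism is again an isomorphism, so $\mu_M$ is the required isomorphism $R\tensor{\Z}M^R\cong M$.

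The only genuinely delicate point is the interplay with the local units. It enters already in the $\Z$-action used for the balancing identity: for $z\in\Z$ and $e\in Unit\{r\}$ one has $z(r)=z(e)\,r=r\,z(e)$ (right and left $R$-linearity of $z$ applied to $r=er=re$), which is what makes $f(z(r))=z(e)f(r)=f(r)z(e)$ meaningful and independent of the chosen $e$. Everything after the base case is then formal, relying only on additivity of the functors and on the stability of isomorphisms under retracts.
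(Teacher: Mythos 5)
Your proof is correct and follows essentially the same route as the paper's: both reduce to the free case $R^{(n)}$ via the chosen splitting and use the evaluation map $r\tensor{\Z}f\mapsto f(r)$. The only difference is presentational — the paper exhibits the explicit dual basis $\{(\psi_i,\psi_i^*)\}_i$ for (i) and the explicit inverse $m\mapsto\sum_i\varphi_i(m)\tensor{\Z}\psi_i$ for (ii), whereas you obtain the same conclusions formally from additivity of the two functors and the fact that a retract of an isomorphism is an isomorphism.
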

\begin{proof}
Let us consider the following canonical $R$-bimodule homomorphisms:$$ \xymatrix{ \varphi_i: M \ar@ ^{(->}^-{\iota}[r] & R^{(n)} \ar@
{->>}^-{\pi_i}[r] & R, }
\quad \xymatrix{ \psi_i: R \ar@ ^{(->}^-{\iota_i}[r] & R^{(n)} \ar@
{->>}^-{\pi}[r] & M, } \quad i=1,\cdots, n,
$$ where $\pi_i$ and $\iota_i$ are the canonical projections and
injections.

$(i)$ It is clear that $\{(\psi_i,\, \psi_i^*)\}_i \in M^R \times
\hom{\Z}{M^R}{\Z}$ is a finite dual $\Z$-basis for $M^R$, where
each $\psi_i^*$ is defined by the right composition with
$\varphi_i$, that is, $\psi_i^*(f)\,=\, \varphi_i \circ f$,
for every $f \in M^R$.

$(ii)$ We know that there is a well defined $R$-bilinear map 
$$ \xymatrix@R=0pt{ R\tensor{\Z} M^R \ar@{->}^-{\eta}[rr] & & M \\
r\tensor{\Z} f \ar@{|->}[rr] & & f(r). }$$
We can easily show that  
$$ \xymatrix@R=0pt{ M \ar@{->}^-{\eta^{-1}}[rr] & & R\tensor{\Z}M^R
\\ m \ar@{|->}[rr] & & \sum_i^n \varphi_i(m)\tensor{\Z} \psi_i, }$$
is actually the inverse map of $\eta$.
\end{proof}

We will follow Miyashita's notation concerning this class of
$R$-bimodules.
That is, if we have an $R$-bimodule $M$
which is a direct summand as a bimodule of finitely many copies of
another $R$-bimodule $N$, we shall denote this situation by $M | N$.
It is clear that this relation is reflexive and transitive.
Furthermore, it is compatible with the tensor product
over $R$, in the sense that we have $M \tensor{R} Q | N\tensor{R} Q$
and $Q \tensor{R} M | Q\tensor{R} N$,
for every unital $R$-bimodule $Q$, whenever $M | N$.
In this way, we set 
\begin{equation}\label{Eq:similar}
M \,\,\sim \,\, N, \,\text{ if and only if, }\, M | N \text{ and } N
| M,
\end{equation}
and call $M$ \emph{is similar to} $N$. This in fact defines an
equivalence relation which is,
in the above sense, compatible with the tensor product over $R$.

\begin{lemma}\label{lema:2}
Let $R$ be a ring with local units and $M$, $N$ are unital
$R$-bimodules such that $M | R$ and $N|R$ as bimodules.
Then 
\begin{enumerate}[(i)]
\item for every finitely generated and projective $\Z$-module $P$,
we have an isomorphism of $\Z$-modules
$$ (R\tensor{\Z}P)^R \,\, \cong \,\, P;$$
\item there is a natural (on both components) $\Z$-linear
isomorphism
$$ (M\tensor{R}N)^R \,\, \cong \,\, M^R \tensor{\Z} N^R.$$
\end{enumerate}
\end{lemma}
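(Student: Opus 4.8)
The plan is to treat part (i) as the backbone of the whole statement and to derive part (ii) from it by means of Lemma \ref{lema:1}. The functors $P \mapsto R\tensor{\Z}P$ (from finitely generated projective $\Z$-modules to unital $R$-bimodules) and $(-)^R = \hom{R-R}{R}{-}$ should be mutually inverse equivalences on the relevant subcategories, and part (i) is precisely the assertion that the unit of this adjunction is an isomorphism. Accordingly, I would first exhibit an explicit natural candidate for the isomorphism and then check it on a single generator.

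First I would define, for each finitely generated projective $\Z$-module $P$, the $\Z$-linear map $\theta_P : P \to (R\tensor{\Z}P)^R$ sending $p$ to the $R$-bilinear map $r \mapsto r\tensor{\Z}p$; using the $R$-bimodule structure on $R\tensor{\Z}P$ described in Lemma \ref{lema:1}(ii) one checks at once that $\theta_P(p)$ is $R$-bilinear and that $\theta_P$ is natural in $P$. The base case $P = \Z$ reduces to the definitional identity $R^R = \hom{R-R}{R}{R} = \Z$: indeed $R\tensor{\Z}\Z \cong R$ canonically, and under this identification $\theta_{\Z}$ becomes the identity of $\Z$. Since both $P \mapsto P$ and $P \mapsto (R\tensor{\Z}P)^R$ are additive functors (the tensor product and $\hom{R-R}{R}{-}$ being additive), $\theta$ is an isomorphism on every finite free module $\Z^{(n)}$; and because a finitely generated projective $P$ is a retract of some $\Z^{(n)}$, naturality of $\theta$ forces $\theta_P$ to be a retract of the isomorphism $\theta_{\Z^{(n)}}$, hence itself an isomorphism. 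This proves (i).

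For part (ii) I would use Lemma \ref{lema:1}(ii) to replace $M$ and $N$ by $R\tensor{\Z}M^R$ and $R\tensor{\Z}N^R$, and then reshuffle the tensor factors. Since the $\Z$-action is central and $R\tensor{R}R \cong R$ on unital bimodules, one obtains a natural isomorphism of $R$-bimodules
\[
M\tensor{R}N \;\cong\; (R\tensor{\Z}M^R)\tensor{R}(R\tensor{\Z}N^R) \;\cong\; R\tensor{\Z}\big(M^R\tensor{\Z}N^R\big).
\]
By Lemma \ref{lema:1}(i) the modules $M^R$ and $N^R$ are finitely generated projective over the commutative ring $\Z$, hence so is their tensor product $M^R\tensor{\Z}N^R$. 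Applying part (i) with $P = M^R\tensor{\Z}N^R$ then yields
\[
(M\tensor{R}N)^R \;\cong\; \big(R\tensor{\Z}(M^R\tensor{\Z}N^R)\big)^R \;\cong\; M^R\tensor{\Z}N^R,
\]
and naturality on both components follows because every isomorphism used is natural in $M$ and in $N$ (those of Lemma \ref{lema:1} and the reshuffle, together with $\theta$).

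The routine part is the verification that $\theta_P(p)$ lands in $(R\tensor{\Z}P)^R$ and the formal retract argument in (i). The main obstacle is the middle isomorphism in (ii), namely the tensor reshuffle $M\tensor{R}N \cong R\tensor{\Z}(M^R\tensor{\Z}N^R)$: without a global identity in $R$ one must check the well-definedness and mutual inverseness of the rearrangement maps using two-sided local units, taking care that the $R$-balancing over the inner copy of $R$ and the centrality of the $\Z$-action interact correctly. Once this bookkeeping is settled, the reduction to part (i) is immediate.
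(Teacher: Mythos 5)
Your proposal is correct and follows essentially the same route as the paper: the candidate map $p \mapsto [r \mapsto r\tensor{\Z}p]$ in (i) is exactly the one used there, and (ii) is obtained identically by combining Lemma \ref{lema:1} with the reshuffle $M\tensor{R}N \cong R\tensor{\Z}\lr{M^R\tensor{\Z}N^R}$ and then invoking (i). The only cosmetic difference is in (i), where the paper writes the inverse explicitly via a dual $\Z$-basis $\{(p_i,p_i^*)\}$ as $f \mapsto \sum_i \lr{(R\tensor{\Z}p_i^*)\circ f}\, p_i$, which is precisely your retract-of-a-free-module argument made concrete.
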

\begin{proof}
$(i)$ Let $\{(p_i,p_i^*)\}_{1 \leq i \leq n}$ be a dual $\Z$-basis
for $P$. Given an element $f \in (R\tensor{\Z}P)^R$,
we define a finite family of elements
in $\Z$:
$$ z_{f,\, i} \,\,:=\,\, (R\tensor{\Z}p_i^*) \circ f: R\to
R\tensor{\Z}P \to R\tensor{\Z}\Z \cong R. $$
Now, an easy computation shows that the following maps are mutually
inverse
$$\xymatrix@R=0pt{ P \ar@{->}[rr] & & (R\tensor{\Z}P)^R \\ p
\ar@{|->}[rr] & & \left[ \underset{}{}
r\longmapsto r\tensor{\Z} p\right], } \qquad 
\xymatrix@R=0pt{ (R\tensor{\Z}P)^R \ar@{->}[rr] & & P \\ f
\ar@{|->}[rr] & & \sum_i^n z_{f,\, i} \, p_i, }$$
which gives the stated isomorphism. 

$(ii)$ By Lemma \ref{lema:1}(ii) we know that 
$$ M \,\, \cong\,\, R\tensor{\Z} M^R \text{ and } N \,\, \cong\,\,
R\tensor{\Z} N^R.$$ Therefore, we have
$$ M\tensor{R}N \,\, \cong R\tensor{\Z}\lr{M^R \,\tensor{\Z}\,
N^R}.$$
Since by Lemma \ref{lema:1}(i), $M^R \,\tensor{\Z}\, N^R$ is a
finitely generated and projective $\Z$-module,
we obtain the desired isomorphism by applying item $(i)$ just proved
above.
\end{proof}

Keep the notations of the first paragraph in the proof of Lemma
\ref{lema:1}.

\begin{proposition}\label{prop:1}
Let $R$ be a ring with local units and $M$, $N$ are unital
$R$-bimodules such that $M | R$ and $N|R$ as bimodules. Then there
is a natural
$R$-bilinear isomorphism $M\tensor{R}N \cong N\tensor{R} M$ given
explicitly by
$$\xymatrix@R=0pt{ \mathsf{T}_{M,N}\,: M \tensor{R}N
\ar@{->}^-{\cong}[rr] & & N\tensor{R} M
\\ x\tensor{R}y \ar@{|->}[rr] & & \sum_i^n \varphi_i(x)\, y
\tensor{R} \psi_i(e), }$$
where $e \in Unit\{x,y\}$. 
\end{proposition}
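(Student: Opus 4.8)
The plan is to realise $\mathsf{T}_{M,N}$ not by checking the displayed formula by brute force, but as the composite of natural isomorphisms already at our disposal, and only afterwards to recognise that this composite is computed by that very formula. Concretely, since $M\,|\,R$ and $N\,|\,R$ force $M\tensor{R}N\,|\,R$ (by the compatibility of $|$ with $\tensor{R}$ recorded above), Lemma~\ref{lema:1}(ii) and Lemma~\ref{lema:2}(ii) supply natural $R$-bilinear isomorphisms $M\tensor{R}N \cong R\tensor{\Z}(M\tensor{R}N)^R \cong R\tensor{\Z}\lr{M^R\tensor{\Z}N^R}$, and likewise $N\tensor{R}M \cong R\tensor{\Z}\lr{N^R\tensor{\Z}M^R}$. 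Because $\Z=\Endo{R-R}{R}$ is commutative, the flip $f\tensor{\Z}g\mapsto g\tensor{\Z}f$ is a $\Z$-linear isomorphism $M^R\tensor{\Z}N^R \cong N^R\tensor{\Z}M^R$; applying $R\tensor{\Z}-$ to it and sandwiching it between the previous isomorphisms yields a natural $R$-bilinear isomorphism $M\tensor{R}N \cong N\tensor{R}M$. I would define $\mathsf{T}_{M,N}$ to be this composite. Naturality in both arguments, and the fact that it is an isomorphism (with inverse the analogous composite $\mathsf{T}_{N,M}$ built from the data for $N\,|\,R$), are then immediate, since every constituent is natural and invertible.

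The substantive step is to confirm that this intrinsic map is given by the stated closed form. Here I would trace a generator $x\tensor{R}y$ through the composite. Writing $\eta_M^{-1}(x)=\sum_i \varphi_i(x)\tensor{\Z}\psi_i$ and $\eta_N^{-1}(y)=\sum_j \tilde{\varphi}_j(y)\tensor{\Z}\tilde{\psi}_j$ as in Lemma~\ref{lema:1}(ii) (the tilded maps being the analogues for $N$), collapsing the inner $\tensor{R}$ identifies $x\tensor{R}y$ with $\sum_{i,j}\varphi_i(x)\tilde{\varphi}_j(y)\tensor{\Z}\psi_i\tensor{\Z}\tilde{\psi}_j$; the flip turns this into $\sum_{i,j}\varphi_i(x)\tilde{\varphi}_j(y)\tensor{\Z}\tilde{\psi}_j\tensor{\Z}\psi_i$; and evaluating back into $N\tensor{R}M$ sends it to $\sum_{i,j}\tilde{\psi}_j\lr{\varphi_i(x)\tilde{\varphi}_j(y)}\tensor{R}\psi_i(e)$, where $e$ is a two-sided unit for the scalar $\varphi_i(x)\tilde{\varphi}_j(y)$. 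Using that each $\tilde{\psi}_j$ is left $R$-linear together with the dual-basis relation $\sum_j \tilde{\psi}_j\circ\tilde{\varphi}_j=\mathrm{id}_N$ (which comes from $\pi\iota=\mathrm{id}$ for $N\,|\,R$), this collapses to $\sum_i \varphi_i(x)\,y\tensor{R}\psi_i(e)$, exactly the displayed formula. A single $e\in Unit\{x,y\}$ may be used throughout, because $e\varphi_i(x)=\varphi_i(ex)=\varphi_i(x)$ and $\tilde{\varphi}_j(y)e=\tilde{\varphi}_j(ye)=\tilde{\varphi}_j(y)$ show that such an $e$ is a two-sided unit for every $\varphi_i(x)\tilde{\varphi}_j(y)$; and independence of the formula from the particular choice of $e$ is automatic, since the composite it computes was defined with no reference to $e$ at all.

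The main obstacle I anticipate is the careful bookkeeping in the middle identification $\lr{R\tensor{\Z}M^R}\tensor{R}\lr{R\tensor{\Z}N^R}\cong R\tensor{\Z}M^R\tensor{\Z}N^R$: because $R$ has no global identity, moving a scalar of $R$ across the central $\tensor{R}$ requires inserting local units, and at each stage one must verify that the maps respect both the $\Z$-balancing and the $R$-balancing. A fully elementary alternative, which I would keep in reserve, is to define $\mathsf{T}_{M,N}$ directly by the formula and to check by hand that $(x,y)\mapsto \sum_i\varphi_i(x)\,y\tensor{R}\psi_i(e)$ is independent of $e\in Unit\{x,y\}$, biadditive, and $R$-balanced and $R$-bilinear, and then that $\mathsf{T}_{N,M}$ is a two-sided inverse. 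Both routes rely on exactly the same ingredients, namely the $R$-bilinearity of the $\varphi_i,\psi_i$ and the relations $\sum_i\psi_i\circ\varphi_i=\mathrm{id}_M$ and $\sum_j\tilde{\psi}_j\circ\tilde{\varphi}_j=\mathrm{id}_N$; accordingly I would present the conceptual composite as the main argument and invoke the direct computation only to pin down the explicit closed form.
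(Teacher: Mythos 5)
Your proposal follows exactly the paper's route: the paper also defines $\mathsf{T}_{M,N}$ as the composite $M\tensor{R}N \cong R\tensor{\Z}M^R\tensor{\Z}N^R \cong R\tensor{\Z}N^R\tensor{\Z}M^R \cong N\tensor{R}M$ obtained from Lemmata \ref{lema:1} and \ref{lema:2} and the flip over the commutative ring $\Z$, and then notes that writing out the composite yields the displayed formula. Your explicit tracing of a generator (using $\sum_j\tilde{\psi}_j\circ\tilde{\varphi}_j=\mathrm{id}_N$ and the left $R$-linearity of the $\tilde{\psi}_j$) correctly supplies the verification the paper leaves implicit, so the argument is sound and essentially identical.
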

\begin{proof}
By Lemmata \ref{lema:1} and \ref{lema:2}, we have the following
chain of $R$-bilinear isomorphisms
$$\xymatrix{ M\tensor{R}N \ar@{->}^-{\cong}[r] & R\tensor{\Z}
M^R\tensor{\Z} N^R
\ar@{->}^-{\cong}[r] & R\tensor{\Z} N^R\tensor{\Z} M^R
\ar@{->}^-{\cong}[r] &
N\tensor{R} (R\tensor{\Z} M^R) \ar@{->}^-{\cong}[r] & N \tensor{R}M.
}$$
Writing down explicitly the composition of these isomorphisms, we
obtain the stated one.
\end{proof}

The natural transformation $\Sf{T}_{-,-}$ is associative in the
following sense.

\begin{lemma}\label{lema:3}
Let $R$ be ring with a local units. Consider $X, Y, P, Q, U, V$
unital
$R$-bimodules which satisfy $Q | R$, $(Y\tensor{R}V)|R$,
$(X\tensor{R}U) | R$ and $(P\tensor{R} Y)|R$, all as bimodules. Then
the following diagram
is commutative:
$$
\xymatrix@C=97pt@R=40pt{
X\tensor{R}U\tensor{R}P\tensor{R}Y\tensor{R}V\tensor{R}Q
\ar@{->}^-{X\tensor{}U\tensor{}P\tensor{}\Sf{T}_{Y\tensor{}V,\,Q}}[r]
\ar@{->}|-{\Sf{T}_{X\tensor{}U,\,P\tensor{}Y}\tensor{}V\tensor{}Q}[d]
& X\tensor{R}U\tensor{R}P\tensor{R}Q\tensor{R}Y\tensor{R}V
\ar@{->}|-{\Sf{T}_{X\tensor{}U,\,P\tensor{}Q\tensor{}Y}\tensor{}V}[d]
\\
P\tensor{R}Y\tensor{R}X\tensor{R}U\tensor{R}V\tensor{R}Q 
\ar@{->}_-{P\tensor{}\Sf{T}_{Y\tensor{}X\tensor{}U\tensor{}V,\,Q}}[r]
& P\tensor{R}Q\tensor{R}Y\tensor{R}X\tensor{R}U\tensor{R}V.}
$$  
\end{lemma}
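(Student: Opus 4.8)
The plan is to read the square as a \emph{coherence identity} for the twist $\Sf{T}_{-,-}$, viewing the latter as the braiding transported from the symmetry (the flip) of the symmetric monoidal category of finitely generated projective $\Z$-modules through the correspondence $M\mapsto M^R$ furnished by Lemma \ref{lema:1} and Lemma \ref{lema:2}; indeed, by the proof of Proposition \ref{prop:1}, $\Sf{T}_{M,N}$ is the image under $R\tensor{\Z}(-)$ of the flip $M^R\tensor{\Z}N^R\to N^R\tensor{\Z}M^R$. Under that optic the asserted diagram is an instance of the commutation of two independent braidings, which holds in any braided category. My first step, however, would be to check that the four arrows are even defined, i.e.\ that each twist is applied to a pair of bimodules similar to $R$. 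The only non-obvious instances are $\Sf{T}_{Y\tensor{R}X\tensor{R}U\tensor{R}V,\,Q}$ and $\Sf{T}_{X\tensor{R}U,\,P\tensor{R}Q\tensor{R}Y}$: from $(X\tensor{R}U)|R$ and compatibility of $|$ with $\tensor{R}$ one gets $Y\tensor{R}(X\tensor{R}U)\tensor{R}V\,|\,Y\tensor{R}V$, whence $(Y\tensor{R}X\tensor{R}U\tensor{R}V)|R$ by transitivity and $(Y\tensor{R}V)|R$; dually, $Q|R$ gives $P\tensor{R}Q\tensor{R}Y\,|\,P\tensor{R}Y$, so $(P\tensor{R}Q\tensor{R}Y)|R$. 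Both composites are then $R$-bilinear isomorphisms $X\tensor{R}U\tensor{R}P\tensor{R}Y\tensor{R}V\tensor{R}Q\to P\tensor{R}Q\tensor{R}Y\tensor{R}X\tensor{R}U\tensor{R}V$ inducing the same permutation of factors, so only the agreement of the attached scalar data is at stake.

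The tempting shortcut --- transport the whole square to $\Z$-modules and invoke coherence of the flip --- is precisely what is \emph{not} available, and this is the heart of the difficulty. Under the present hypotheses only the four blocks $X\tensor{R}U$, $P\tensor{R}Y$, $Y\tensor{R}V$ and $Q$ are assumed similar to $R$; the individual factors $P$, $Y$, $V$ (and pairings such as $Q\tensor{R}Y$) are not. Hence the vertices need not be similar to $R$ (for instance $X\tensor{R}U\tensor{R}P\tensor{R}Y\tensor{R}V\tensor{R}Q$ is only seen to be similar to $P$), so Lemma \ref{lema:1}(ii) does not apply to them and $(-)^R$ is not an equivalence on the objects in sight. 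Moreover the intermediate regroupings $P\tensor{R}(Q\tensor{R}Y)$ and $Y\tensor{R}(X\tensor{R}U)\tensor{R}V$ break the four given blocks apart --- the factor $Y$ is \emph{shared} between $P\tensor{R}Y$ and $Y\tensor{R}V$ --- so the square cannot be split into hexagon identities whose intermediate objects are again pairs similar to $R$.

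I would therefore establish the identity at the level of elements, using the explicit description of $\Sf{T}$ in Proposition \ref{prop:1}. As both composites are additive and $R$-bilinear, it suffices to evaluate them on a simple tensor $x\tensor{R}u\tensor{R}p\tensor{R}y\tensor{R}v\tensor{R}q$, and by the local-units axiom one may fix a single two-sided unit $e\in Unit\{x,u,p,y,v,q\}$ to serve every twist simultaneously. For each block $B$ with the canonical data $\{(\varphi_i^{B},\psi_i^{B})\}$ from the proof of Lemma \ref{lema:1}, I would repeatedly apply $\Sf{T}_{B,N}(b\tensor{R}n)=\sum_i \varphi_i^{B}(b)\,n\tensor{R}\psi_i^{B}(e)$ together with the completeness relation $\sum_i \psi_i^{B}\circ\varphi_i^{B}=\mathrm{id}_{B}$ (which follows from $\sum_i\psi_i\varphi_i=\pi\circ(\sum_i\iota_i\pi_i)\circ\iota=\pi\circ\iota=\mathrm{id}$ in the notation of that proof). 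Expanding $b\circ l$ and $r\circ t$ this way, the two sides should reduce, after the completeness relations for $X\tensor{R}U$ and $Y\tensor{R}V$ are invoked, to one and the same expression; naturality of $\Sf{T}$ from Proposition \ref{prop:1} (valid whenever the modules involved are similar to $R$) can be used along the way to slide the spectator factors past the twists and so cut down the number of block data one must carry.

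The main obstacle is exactly this bookkeeping. Because the completed blocks $Y\tensor{R}X\tensor{R}U\tensor{R}V$ and $P\tensor{R}Q\tensor{R}Y$ interleave the given ones through the shared factor $Y$, their canonical bases do not factor through those of $X\tensor{R}U$, $P\tensor{R}Y$ and $Y\tensor{R}V$; consequently one cannot substitute a clean symmetry-coherence argument for the comparison, and matching the two expansions term by term --- showing that the scalar contributions coming from the dual bases of the four blocks recombine identically on both routes --- is the step that carries the real weight of the proof.
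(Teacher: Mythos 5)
Your preliminary analysis is correct and in fact supplies more than the paper's one-word proof (``Straightforward'') does: the verification that all four twists are defined is exactly right (the reductions $(P\tensor{R}Q\tensor{R}Y)\,|\,(P\tensor{R}Y)\,|\,R$ and $(Y\tensor{R}X\tensor{R}U\tensor{R}V)\,|\,(Y\tensor{R}V)\,|\,R$ via compatibility of $|$ with $\tensor{R}$ are the right ones), and your diagnosis of why the square cannot simply be transported to $\Z$-modules and settled by coherence of the flip is sound --- the vertices are only similar to direct summands of copies of $P$, the factor $Y$ is shared between the blocks $P\tensor{R}Y$ and $Y\tensor{R}V$, and the individual factors $P$, $Y$, $V$ need not be $\Z$-central, so scalars of the form $z(e)$ with $z \in \Z$ cannot be slid past them. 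Reducing to an element-level computation with the explicit formula of Proposition \ref{prop:1} is the right, and essentially the only available, strategy.

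The gap is that the decisive step is announced but never carried out. You write that the two expansions ``should reduce \dots to one and the same expression'' and then yourself identify the term-by-term matching as ``the step that carries the real weight of the proof''; that matching is precisely the content of the lemma, and nothing in the proposal establishes it. Concretely, two things are missing. First, the formula $\Sf{T}_{M,N}(m\tensor{R}n)=\sum_i\varphi_i^{M}(m)\,n\tensor{R}\psi_i^{M}(e)$ for the composite blocks $E=Y\tensor{R}X\tensor{R}U\tensor{R}V$ and $F=P\tensor{R}Q\tensor{R}Y$ is expressed in dual-basis data for $E$ and $F$ that are a priori unrelated to the data of the four given blocks; before any completeness relation can be invoked one must either prove that $\Sf{T}_{M,N}$ is independent of the chosen splitting $M\,|\,R^{(n)}$ (true, since it is a composite of canonical isomorphisms) and then fix splittings of $E$ and $F$ assembled from those of $X\tensor{R}U$, $Y\tensor{R}V$, $P\tensor{R}Y$ and $Q$, or argue around this in some other way. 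Second, even with such compatible choices the two expansions do not visibly coincide: route one produces coefficients $\varphi_{i'}(\psi_i(e))$ lodged between tensor factors that are not similar to $R$, and showing that these recombine (using $\sum_i\psi_i\circ\varphi_i=\mathrm{id}$, the $R$-bilinearity of the $\varphi$'s and $\psi$'s, and the fact that each $\varphi_{i'}\circ\psi_i$ lies in $\Z$ and hence commutes with the $R$-actions) into the expression produced by route two is a genuine, if finite, calculation. Until that calculation is exhibited the proof is a plan, not a proof.
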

\begin{proof}
Straightforward. 
\end{proof}

Recall form \cite[p. 227]{ElKaoutit/Gomez:2010} that a unital
$R$-bimodule $X$ is said to be \emph{invertible}
if there exists another unital $R$-bimodule $Y$ with two
$R$-bilinear isomorphisms
$$
\xymatrix{ X\tensor{R} Y \ar@{->}^-{\fk{l}}_-{\cong}[rr] & & R & &
Y\tensor{R} X. \ar@{->}_-{\fk{r}}^-{\cong}[ll] }
$$
As was shown in \cite{ElKaoutit/Gomez:2010}, one can choose the
isomorphisms $\fk{l}, \fk{r}$ such that
$$ X\tensor{R}\fk{r}\,\,=\,\, \fk{l}\tensor{R} X,\quad
Y\tensor{R}\fk{l}\,\,=\,\, \fk{r}\tensor{R} Y.$$
In others words, such that $(\fk{l}, \fk{r}^{-1})$ is a Morita
context from $R$ to $R$.
In this way, $Y$ is isomorphic as an $R$-bimodule 
to the right unital part $X^*R$ of the right dual $R$-module
$X^*\,=\, \hom{-R}{X}{R}$ of $X$. This isomorphism
is explicitly given by 
$$
Y \overset{\cong}{\longrightarrow} X^*R, \quad \lr{y \longmapsto [x
\mapsto \fk{r}(y\tensor{}x)]}.
$$
It is worth mentioning that $X$ is not necessarily finitely
generated as right unital $R$-module;
but a direct limit of finitely generated and projective right unital
$R$-modules.

Given $e \in \Sf{E}$ consider its decomposition with respect to the
invertible unital $R$-bimodule $X$, that is,
$$ e \,\,=\,\, \sum_{(e)} x_e y_e,\quad (\text{i.e.}\;\;  e \,\,=\,\, \sum_{(e)}  \fk{l}(x_e\tensor{R} y_e) ). $$
It is clear that the $x_e$'s and the $y_e$'s can be chosen such that$$ ex_e\,\,=\,\,x_e, \text{ and } y_e\,\,=\,\, y_e e,$$ which means that  $e$
is a right unit for the $y_e$'s and
left unit for the $x_e$'s.

\begin{remark}
It is noteworthy that a two sided unit of the set of elements $\{
x_e, y_e\}$ do not need
in general to coincide with $e$. 
However, any two sided unit $e_1 \in Unit\{ x_e, y_e\}$ is also a
unit for $e$. In other words,
the elements $e$ and $\sum_{(e)} x_e \,e \, y_e$ belong to the same
unital ring $eRe$, but they are not necessarily
equal. This makes a difference in technical difficulties
compared with the case of unital rings.
\end{remark}

The following lemma gives explicitly  the  relation between the automorphisms group of unital invertible $R$-bimodule and the unit group of the commutative unital ring $\Z=\Endo{R-R}{R}$.

\begin{lemma}\label{lema:4}
Let $R$ be a ring with local units and $X$ an invertible unital
$R$-bimodule. Then there is an isomorphism of groups, from the group of
$R$-bilinear
automorphisms of $X$, $\Aut{R-R}{X}$ to the unit group of $\Z$,
defined by
\begin{equation}\label{Eq:tilda}
\xymatrix@R=0pt{ \Aut{R-R}{X} \ar@{->}^-{\widetilde{(-)}}[rr] & &
\Aut{R-R}{R}=\mathcal{U}(\Z) \\ \sigma \ar@{|->}[rr] & &
\widetilde{\sigma} }
\end{equation}
where for each $r \in R$ with unit $e \in Unit\{r\}$, we have 
$$\widetilde{\sigma}(r) \,\,=\,\, \sum_{(e)} \sigma(x_e)y_e r
\,\,=\,\, \sum_{(e)} r\sigma(x_e)y_e.$$
In particular, for every element $\fk{t} \in X$ and $\sigma \in
\Aut{R-R}{X}$, we have
$$ \sigma(\fk{t})\,\,=\,\, \widetilde{\sigma}(e) \fk{t}, \quad \text{whenever
} e \in Unit\{\fk{t}\}.$$
\end{lemma}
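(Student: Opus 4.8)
The plan is to give $\widetilde{(-)}$ a choice-free description, from which well-definedness is automatic, and then to exhibit an explicit inverse. Throughout I fix an inverse bimodule $Y$ and isomorphisms $\fk{l}\colon X\tensor{R}Y\to R$, $\fk{r}\colon Y\tensor{R}X\to R$ forming a Morita context as in the excerpt, so that $\fk{l}(x\tensor{}y)\,x'=x\,\fk{r}(y\tensor{}x')$ for all $x,x'\in X$ and $y\in Y$.

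First I would \emph{define} $\widetilde{\sigma}:=\fk{l}\circ(\sigma\tensor{R}Y)\circ\fk{l}^{-1}$, which visibly belongs to $\Z=\Endo{R-R}{R}$ as a composite of $R$-bilinear maps, and then check that it agrees with the stated formula. Since $\fk{l}^{-1}(e)=\sum_{(e)}x_e\tensor{R}y_e$ and $\fk{l}^{-1}$ is right $R$-linear, for $e\in Unit\{r\}$ one has $\fk{l}^{-1}(r)=\fk{l}^{-1}(e)r=\sum_{(e)}x_e\tensor{R}y_er$; applying $\sigma\tensor{R}Y$ and then $\fk{l}$ gives $\widetilde{\sigma}(r)=\sum_{(e)}\sigma(x_e)y_er$, the first expression. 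This composite description makes independence of the choices of $e$ and of the decomposition of $e$ evident. For the second expression I would record the elementary identity that every $z\in\Z$ satisfies $z(r)=z(e)r=rz(e)$ when $e\in Unit\{r\}$ (immediate from $R$-bilinearity and $r=er=re$); applied to $z=\widetilde{\sigma}$, with $\widetilde{\sigma}(e)=\sum_{(e)}\sigma(x_e)y_e$ (put $r=e$ and use $y_ee=y_e$), this yields $\widetilde{\sigma}(r)=r\,\widetilde{\sigma}(e)=\sum_{(e)}r\sigma(x_e)y_e$.

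Next I would verify that $\widetilde{(-)}$ is a homomorphism into $\mathcal{U}(\Z)$. As $\sigma$ is invertible so is $\sigma\tensor{R}Y$, hence $\widetilde{\sigma}\in\mathcal{U}(\Z)=\Aut{R-R}{R}$ with $(\widetilde{\sigma})^{-1}=\widetilde{\sigma^{-1}}$; and from $(\sigma\circ\tau)\tensor{R}Y=(\sigma\tensor{R}Y)\circ(\tau\tensor{R}Y)$ together with $\fk{l}^{-1}\circ\fk{l}=\mathrm{id}$ one gets $\widetilde{\sigma\circ\tau}=\widetilde{\sigma}\circ\widetilde{\tau}$, recalling that multiplication in $\Z$ is composition. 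The ``in particular'' clause is then a short Morita computation: for $e\in Unit\{\fk{t}\}$, using $\fk{l}(x\tensor{}y)x'=x\,\fk{r}(y\tensor{}x')$ twice and the right $R$-linearity of $\sigma$,
\[
\widetilde{\sigma}(e)\fk{t}=\sum_{(e)}\sigma(x_e)\,\fk{r}(y_e\tensor{}\fk{t})=\sigma\Big(\sum_{(e)}x_e\,\fk{r}(y_e\tensor{}\fk{t})\Big)=\sigma\Big(\sum_{(e)}\fk{l}(x_e\tensor{}y_e)\,\fk{t}\Big)=\sigma(e\fk{t})=\sigma(\fk{t}).
\]

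Finally I would construct the inverse map. For $z\in\mathcal{U}(\Z)$ set $\sigma_z$ to be the composite $X\cong R\tensor{R}X\xrightarrow{z\tensor{R}X}R\tensor{R}X\cong X$; this is a well-defined $R$-bilinear automorphism (inverse $\sigma_{z^{-1}}$, since $(z\circ z')\tensor{R}X=(z\tensor{R}X)\circ(z'\tensor{R}X)$), and because the class $e\tensor{R}\fk{t}$ in $R\tensor{R}X$ does not depend on the chosen left unit $e$ of $\fk{t}$, one has $\sigma_z(\fk{t})=z(e)\fk{t}$ for \emph{every} left unit $e$ of $\fk{t}$. The ``in particular'' formula gives $\sigma_{\widetilde{\sigma}}=\sigma$, while $\sigma_z(x_e)=z(e)x_e$ (here $e$ is a left unit of $x_e$) leads to $\widetilde{\sigma_z}(r)=\sum_{(e)}z(e)\fk{l}(x_e\tensor{}y_e)r=z(e)\,e\,r=z(e)\,r=z(r)$, so the two assignments are mutually inverse; being a homomorphism, $\widetilde{(-)}$ is then an isomorphism of groups. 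I expect the only real difficulty to be the non-unital bookkeeping --- in particular that the chosen $x_e$ admit $e$ merely as a one-sided unit --- which is precisely why the choice-free descriptions of $\widetilde{\sigma}$ and $\sigma_z$, and the observation that $e\tensor{R}\fk{t}$ is canonical in $R\tensor{R}X$, are worth isolating at the outset.
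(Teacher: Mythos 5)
Your proof is correct, and it is worth noting that it is considerably more detailed than what the paper actually provides: the paper's entire proof of this lemma is the single sentence ``Follows directly from [Beattie/Del R\'{\i}o, Lemma 2.1]'', so the burden of the argument is outsourced to a citation rather than carried out. Your route --- defining $\widetilde{\sigma}$ choice-freely as $\fk{l}\circ(\sigma\tensor{R}Y)\circ\fk{l}^{-1}$, recovering the stated element-level formula from it, and inverting via $z\mapsto\bigl(X\cong R\tensor{R}X\xrightarrow{z\tensor{R}X}R\tensor{R}X\cong X\bigr)$ --- is the natural self-contained argument, and it buys two things the bare formula does not: well-definedness (independence of the unit $e$ and of the decomposition $e=\sum_{(e)}x_ey_e$) is automatic rather than a computation, and the multiplicativity of $\widetilde{(-)}$ and of $z\mapsto\sigma_z$ reduces to functoriality of $-\tensor{R}Y$ and $-\tensor{R}X$. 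Your isolation of the one-sided-unit issue (that the chosen $x_e$ admit $e$ only as a left unit) is exactly the non-unital subtlety the paper flags in the remark preceding the lemma, and your verification $\widetilde{\sigma_z}(r)=z(e)er=z(r)$ handles it correctly. The ``in particular'' computation via the Morita compatibility $\fk{l}(x\tensor{}y)x'=x\,\fk{r}(y\tensor{}x')$ is also exactly what is needed, and this identity is available because the paper has arranged $(\fk{l},\fk{r}^{-1})$ to be a Morita context. In short: same statement, but you supply the proof the paper delegates to the literature.
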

\begin{proof}
Follows directly from \cite[Lemma 2.1]{Beattie/Del Rio:1999}.
\end{proof}

The Picard group of $R$ is denoted by $\Picar{R}$. It consists   of all isomorphisms classes of  invertible unital $R$-bimodules, with multiplication induced by the tensor product.  As in the unital case \cite[Theorem 2.(i)]{Froehlich:1973}, there is a canonical homomorphism of groups ${ \boldsymbol{\alpha}}: \Picar{R} \to \aut{\U(\Z)}$. This homomorphism is  explicitly given as follows.  Given $[X] \in \Picar{R}$ and  $u \in \U(\Z)$, we consider the $R$-bilinear automorphism of $X$  defined by $$\sigma_u:  X \longrightarrow X,\qquad \lr{\fk{t} \longmapsto \fk{t}\, u(e), \,\, \fk{t} \in X, \text{ and } e \in Unit\{\fk{t}\} }.$$ We clearly have $\sigma_{uv}\,=\, \sigma_{u} \circ \sigma_v$, and so $\widetilde{\sigma_{uv}}\,=\, \widetilde{\sigma_{u}} \circ \widetilde{\sigma_v}$, for $u,v \in \U(\Z)$
It follows from Lemma \ref{lema:4} that 
\begin{equation}\label{Eq:permutar}
\sigma_u(r\fk{t}) = \widetilde{\sigma_u}(r)\fk{t},
\text{ for every }\fk{t} \in X, r \in R.
\end{equation}
This equation implies that the element $\widetilde{\sigma_u} \in \U(\Z)$ is independent from the choice of the representative $X$ of the class $[X] \in \Picar{R}$.
We have thus a well defined map
\[
\boldsymbol{\alpha} : \Picar{R} \to \aut{\U(\Z)}, \qquad [X] \mapsto \boldsymbol{\alpha}_{[X]}(u)\,=\,\widetilde{\sigma_u}
 \] 
 which is a homomorphism of groups by \eqref{Eq:permutar}. This is exactly the homomorphism  induced by the map $\alpha$ of \cite[p. 135]{Beattie/Del Rio:1999}.  We have  the formula:
\begin{equation}\label{Eq:formula}
\boldsymbol{\alpha}_{[X]}(u)(r)\,\,=\,\,\widetilde{\sigma_u}(r)\,\,=\,\,\sum_{(e)} rx_e u(e_1) y_e, \quad (\text{i.e.}  \sum_{(e)} r\fk{l}(x_e\tensor{R} u(e_1) y_e)), 
\end{equation}
where $e \in Unit\{r\}$, $e_1 \in Unit\{x_e, y_e \}$, and as before $e\,=\, \sum_{(e)}x_ey_e$.
\color{black}

\section{Generalized crossed products with local units.}\label{sec:2}

Let $\G$ be any group with neutral element $1$. Consider a ring $R$
with a set of local units $\mathsf{E}$, and a group homomorphism
$\underline{\Theta} : \G \to \Picar{R}$. This is equivalent to give
sets of invertible $R$-bimodules and isomorphisms of bimodules
\[
\{ \Theta_x : x \in \G \}, \qquad \{ \F^{\Theta}_{x,y} : \Theta_x
\tensor{R} \Theta_y \overset{\cong}{\longrightarrow} \Theta_{xy},\,\,
x,y \in \G \},
\]
where $\underline{\Theta}(x) = [\Theta_x]$ for every $x \in \G$.
Consider the product on the direct sum $\bigoplus_{x \in \G}\Theta_x$ determined by
the maps $\F^{\Theta}_{x,y}$. There is no reason to expect that this
product, after the choice of the representatives $\Theta_x$, will be
associative. In fact, it becomes an associative multiplication if
and only if the diagram
\begin{equation}\label{asociativa}
\xymatrix@R=30pt{\Theta_x \tensor{R} \Theta_y \tensor{R} \Theta_z
\ar^-{\F^{\Theta}_{xy} \tensor{} \Theta_z}[rr] \ar_-{\Theta_x
\tensor{} \F^{\Theta}_{yz}}[d]& & \Theta_{xy} \tensor{R} \Theta_z
\ar^-{\F^{\Theta}_{xy,z}}[d]\\
\Theta_{x} \tensor{R} \Theta_{yz} \ar^-{\F^{\Theta}_{x,yz}}[rr] & &
\Theta_{xyz}}
\end{equation}
commutes for every $x,y,z \in \G$. Even in this case, it is not clear wether
this multiplication has a set of local units. On the other hand, we
know that there is an isomorphism of $R$-bimodules $\iota : R \to
\Theta_1$, so a natural candidate for such a set should be $\Sf{E}'
= \iota(\Sf{E})$. In fact, this is a set of local units if and only if, for every $x \in \G$, the following diagrams are commutative
\begin{equation}\label{Eq:unitario}
\xymatrix{\Theta_x \tensor{R} R \ar^-{\simeq}[rr] \ar_-{\Theta_x
\tensor{} \iota}[dr]& & \Theta_x \\
& \Theta_x \tensor{R} \Theta_1 \ar_-{\F^{\Theta}_{x,1}}[ur]& },
\qquad
\xymatrix{R \tensor{R} \Theta_x \ar^-{\simeq}[rr] \ar_-{\iota
\tensor{} \Theta_x}[dr]& & \Theta_x \\
& \Theta_1 \tensor{R} \Theta_x \ar_-{\F^{\Theta}_{1,x}}[ur]& }  
\end{equation}

The previous discussion suggest then  the following definition.

\begin{definition}\label{factormap}
A \emph{factor map} for the morphism $\underline{\Theta}: \G \to \Picar{R}$ consists of  sets of invertible $R$-bimodules and isomorphisms of bimodules
\[
\{ \Theta_x : x \in \G \}, \qquad \{ \F^{\Theta}_{x,y} : \Theta_x
\tensor{R} \Theta_y \overset{\cong}{\longrightarrow} \Theta_{xy},\,\,
x,y \in \G \},
\]
where $\underline{\Theta}(x) = [\Theta_x]$ for every $x \in \G$, and one isomorphism of $R$--bimodules $\iota : R \rightarrow \Theta_1$ such that the diagrams \eqref{asociativa} and \eqref{Eq:unitario} are commutative. 
\end{definition}

Now we come back to our initial homomorphism of groups $\underline{\Theta}: \G \to \Picar{R}$ and its associated family of isomorphisms $\{\F^{\Theta}_{x,y}\}_{x, y, \in \, \G}$. Our next aim is to find conditions under which we can extract from these data a factor map for $R$ and $\G$. First of all we should assume that the diagrams of  \eqref{Eq:unitario}  are commutative. We can define using the homomorphism of groups $\boldsymbol{\alpha}: \Picar{R} \to \aut{\U(\Z)}$ given by equation \eqref{Eq:formula}, a left $\G$-action on the group of units $\U(\Z)$, where $\Z={\rm End}_{R-R}(R)$.  Written down explicitly the composition $\boldsymbol{\alpha} \circ  \underline{\Theta}: \G \to \Picar{R} \to \aut{\U(\Z)}$ the outcome action is described as follows. Given a unit $e \in \Sf{E}$, its  decomposition  with respect to the invertible
$R$-bimodule $\Theta_x$ is written as 
$e\,=\, \sum_{(e)} x_e \bara{x}_e \in \iota^{-1}(\Theta_{1}) =R$. Using formula \eqref{Eq:formula},  the left $\G$-action on $\mathcal{U}(\Z)$ is then defined  by 
\begin{equation}\label{Eq:action}
\xymatrix@R=0pt{ \G \times \mathcal{U}(\Z) \ar@{->}[rr] &
&\mathcal{U}(\Z)
\\ (x, u) \ar@{->}[rr] & & {}^xu\,=\,\left[ r \mapsto \sum_{(e)} r
x_e u(e_1) \bara{x}_e\right], }
\end{equation}
where $e \in Unit\{r\}$ and $e_1 \in Unit\{x_e, \bara{x}_e \}$. 
Recall that in this case $e_1 \in Unit\{ e\}$, and so $e_1 \in
Unit\{r\}$.

The abelian group of all $n$-cocycles with respect to this action is
denoted by $Z_{\Theta}^n(\G,\mathcal{U}(\Z))$; while by
$B_{\Theta}^n(\G,\mathcal{U}(\Z))$ we denote the $n$-coboundary subgroup. The
$n$-cohomology group, is then denoted by
$H_{\Theta}^n(\G, \U(\Z))$.

\begin{remark}\label{rem:H}
To each homomorphism of groups $\underline{\Theta}:\G \to \Picar{R}$, it
corresponds as before a
$\G$-group structure on $\U(\Z)$, and so it leads to the cohomology
groups $H^*_{\Theta}(\G, \U(\Z))$.
The nature of this correspondence stills, up to our knowledge,  unexplored, even in the case of unital rings. However, as we will see, it is in fact the starting key point of any theory of generalized crossed products. Following the commutative and unital case, see Remark \ref{rem:com} below, here  
we will also extract our results from one fixed $\G$-action, this is to say 
fixed cohomology groups. For instance, one could start with the cohomology which corresponds to  the trivial homomorphism of groups, that is, $\mathsf{I}: \G \to \Picar{R}$ sending $x \mapsto [R]$, of course here the cohomology is not at all trivial.
\end{remark}

Here is a non trivial example.
\begin{example}\label{Example:B-Rio}(\cite{Beattie/Del Rio:1999})
Let $R$ be a ring with $\Sf{E}$ as a set of local units. Assume that there is a homomorphism of groups $\Phi : \G \to \aut{R}$ to the group of ring automorphisms of $R$ (we assume that $\Phi(x)(\Sf{E})\,=\, \Sf{E}$, for every $x \in \G$). Composing with the canonical homomorphism $\aut{R} \to \Picar{R}$ which sends $x \mapsto [R^x]$ (i.e. the $R$-bimodule whose underlying left $R$-module is ${}_{R}R$ and its right module structure is $r.s\,=\, rx(s)$), we obtain a homomorphism $\underline{\Theta}: \G \to \Picar{R}$. The corresponding family of maps $\{\F_{x,y}^{\Theta}\}_{x,\, y \in \G}$ is given by 
$$
\F_{x,y}^{\Theta}: R^x\tensor{R}R^y \longrightarrow R^{xy}, \left( r\tensor{R}s \longmapsto rx(s)\right).
$$
Here of course the associativity  and unitary  properties of the multiplication of $R$, both imply  that  $\{\F_{x,y}^{\Theta}\}_{x,\, y \in \G}$ is actually a factor map for $R$ and $\G$.  A routine computation shows now that the cohomology  $H^*_{\Theta}(\G,\U(\Z))$ which corresponds to this $\underline{\Theta}$ is exactly the one considerd in  \cite[p. 135]{Beattie/Del Rio:1999}, since the $\G$-action  \eqref{Eq:action} coincides for this case  with that given in  \cite{Beattie/Del Rio:1999}.
\end{example}

\begin{lemma}\label{lema:5}
Keep the above $\G$-action on $\mathcal{U}(\Z)$. Fix an element $x
\in \G$. 
\begin{enumerate}[(1)]
\item For every $\fk{t} \in \Theta_x$ and $u \in \mathcal{U}(\Z)$,
we have
\begin{equation}\label{Eq:t}
\fk{t} \, u(e) \,\,=\,\, {}^xu(e)\, \fk{t}, \quad \text{ where } e
\in Unit\{\fk{t}\}.
\end{equation}
\item Given similar bimodules $ M \sim \Theta_x$;
for every $\fk{m} \in M$ and $u \in \mathcal{U}(\Z)$, we have 
\begin{equation}\label{Eq:t1}
\fk{m} \, u(e) \,\,=\,\, {}^xu(e)\, \fk{m}, 
\quad \text{ with } e \in Unit\{\fk{m}\}. 
\end{equation} 
\end{enumerate}
\end{lemma}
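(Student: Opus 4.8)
The statement to prove is Lemma~\ref{lema:5}, with two parts: \eqref{Eq:t} for elements of $\Theta_x$ itself, and \eqref{Eq:t1} for elements of any bimodule $M$ similar to $\Theta_x$. The plan is to reduce everything to the already-established behaviour of the homomorphism $\boldsymbol{\alpha}$ and the explicit action \eqref{Eq:action}. The key observation is that the $\G$-action on $\U(\Z)$ was \emph{defined} precisely as $\boldsymbol{\alpha} \circ \underline{\Theta}$, so ${}^xu = \boldsymbol{\alpha}_{[\Theta_x]}(u)$, and the formula \eqref{Eq:formula} together with \eqref{Eq:permutar} should carry essentially all the weight.

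\textbf{Part (1).} First I would apply the defining relation \eqref{Eq:permutar} to the invertible bimodule $X = \Theta_x$ and the automorphism $\sigma_u$: for any $\fk{t} \in \Theta_x$ and any $r \in R$ we have $\sigma_u(r\fk{t}) = \widetilde{\sigma_u}(r)\,\fk{t} = {}^xu(r)\,\fk{t}$, using that $\widetilde{\sigma_u} = \boldsymbol{\alpha}_{[\Theta_x]}(u) = {}^xu$. Next I would compute $\sigma_u(\fk{t})$ directly from its definition $\sigma_u(\fk{t}) = \fk{t}\,u(e)$ with $e \in Unit\{\fk{t}\}$. Taking $r = e$ (a two-sided unit for $\fk{t}$, so $e\fk{t} = \fk{t}$) and combining the two expressions gives
\[
\fk{t}\,u(e) \,=\, \sigma_u(\fk{t}) \,=\, \sigma_u(e\fk{t}) \,=\, {}^xu(e)\,\fk{t},
\]
which is exactly \eqref{Eq:t}. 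The only point requiring a little care is checking that the unit $e$ appearing in the definition of $\sigma_u(\fk{t})$ may be taken to coincide with the $r = e$ used in \eqref{Eq:permutar}; since both are drawn from $Unit\{\fk{t}\}$ and the value $\sigma_u(\fk{t})$ is independent of that choice (this independence is implicit in Lemma~\ref{lema:4}), this causes no trouble.

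\textbf{Part (2).} For $M \sim \Theta_x$ the relation $M \mid \Theta_x$ gives an $R$-bilinear embedding $\lambda : M \hookrightarrow \Theta_x^{(k)}$ that is split as a bimodule map, say with retraction $\rho$. Given $\fk{m} \in M$ with $e \in Unit\{\fk{m}\}$, write $\lambda(\fk{m}) = (\fk{t}_1,\dots,\fk{t}_k)$; since $\lambda$ is $R$-bilinear and $e$ is a two-sided unit for $\fk{m}$, the same $e$ is a two-sided unit for each $\fk{t}_j$. Applying Part~(1) componentwise yields $\fk{t}_j\,u(e) = {}^xu(e)\,\fk{t}_j$ for every $j$, i.e. $\lambda(\fk{m})\,u(e) = {}^xu(e)\,\lambda(\fk{m})$ in $\Theta_x^{(k)}$. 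Because $u(e)$ acts on the right and ${}^xu(e)$ on the left, and $\lambda$ is a bimodule map, this reads $\lambda(\fk{m}\,u(e)) = \lambda({}^xu(e)\,\fk{m})$; applying $\rho$ and using $\rho\lambda = \mathrm{id}_M$ gives \eqref{Eq:t1}. I expect the main obstacle to be purely bookkeeping: verifying that multiplication by the central-type elements $u(e)$ and ${}^xu(e)$ is genuinely $R$-bilinear (so that it commutes with $\lambda$ and $\rho$) and that a single common unit $e$ may be used for $\fk{m}$ and all its components $\fk{t}_j$. Both follow from the $R$-bilinearity of the splitting maps and the local-unit formalism, so no new idea beyond Part~(1) is needed.
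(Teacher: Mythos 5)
Your proof is correct and follows essentially the same route as the paper: part (1) is exactly the content of equation \eqref{Eq:permutar} specialized to $X=\Theta_x$ (where ${}^xu=\widetilde{\sigma_u}$ by the definition of the action), and part (2) is obtained, as in the paper, by lifting \eqref{Eq:t} to the finite direct sum $\Theta_x^{(k)}$ and restricting along the split $R$-bilinear embedding $M\mid\Theta_x^{(k)}$. The extra care you take with the choice of units is sound and merely makes explicit what the paper leaves implicit.
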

\begin{proof}
$(1)$ This is equation \eqref{Eq:permutar}.\\
$(2)$  The identity \eqref{Eq:t} clearly lifts to finite direct sums $\Theta_x^{(n)}$, and, hence, for subbimodules $M$ of $\Theta_x^{(n)}$. \color{black} 
\end{proof}

\begin{proposition}\label{prop:3}
Let $R$ be a ring with a set of local units $\Sf{E}$ and $\G$ any group. Assume that
there is a  homomorphism of groups $\underline{\Theta}: \G \to \Picar{R}$ 
whose  family of maps $\{\F^{\Theta}_{x,y}\}_{x, y\,\in \G}$ satisfy the commutativity of diagrams \eqref{Eq:unitario}.  Denote by $\alpha_{x,y,z}: \Theta_{xyz} \to \Theta_{xyz}$ the isomorphisms that satisfy
\begin{equation}\label{Eq:ass1}
\alpha_{x,y,z} \circ \F^{\Theta}_{x, yz} \circ
\lr{\Theta_x\tensor{R}\F^{\Theta}_{y,z}} \,\,=\,\, \F^{\Theta}_{xy,z} \circ
\lr{\F^{\Theta}_{x,y}\tensor{R}\Theta_z}.
\end{equation}
Then $\widetilde{\alpha_{x,y,z}}$ defines a normalized element of $Z_{\Theta }^3(\G,\,
\mathcal{U}(\Z))$,
where $\widetilde{\alpha_{x,y,z}}$ is the image of $\alpha_{x,y,z}$
under the isomorphism
of groups stated in Lemma \ref{lema:4}. Furthermore, if $\widetilde{\alpha_{-,-,-}}$ is cohomologically trivial,   that is, there is a normalized map
$\sigma_{-,-} : \G\times\G \to \mathcal{U}(\Z)$
such that
$$\widetilde{\alpha _{x,y,z}}\,\,=\,\, \sigma_{x,yz}\, {}^x\sigma_{y,z}\,
\sigma_{x,y}^{-1}\, \sigma_{xy,z}^{-1},$$
for every $x, y,z \in\G$, then there is a factor map  $\{\bara{\F}^{\Theta}_{x,y}\}_{x.y \,\in\G}$ for $\underline{\Theta}$, defined by 
$$
\xymatrix@R=0pt{ \bara{\F}^{\Theta}_{x,y}: \Theta_x \tensor{R}\Theta_y
\ar@{->}[rr] & & \Theta_{xy} \\ \fk{u}_x\tensor{R}\fk{u}_y
\ar@{|->}[rr] & &
\sigma_{x,y}(e) \F^{\Theta}_{x,y}(\fk{u}_x\tensor{R}\fk{u}_y), }
$$
where $e \in Unit\{\fk{u}_x,\,\fk{u}_y\}$.
\end{proposition}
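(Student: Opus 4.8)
The plan is to read $\alpha_{x,y,z}$ as the comparison automorphism between the two natural isomorphisms $\Theta_x\tensor{R}\Theta_y\tensor{R}\Theta_z\to\Theta_{xyz}$. Setting $L_{x,y,z}:=\F^{\Theta}_{xy,z}\circ(\F^{\Theta}_{x,y}\tensor{R}\Theta_z)$ and $R_{x,y,z}:=\F^{\Theta}_{x,yz}\circ(\Theta_x\tensor{R}\F^{\Theta}_{y,z})$, equation \eqref{Eq:ass1} just says $\alpha_{x,y,z}=L_{x,y,z}\circ R_{x,y,z}^{-1}$, which is a well defined element of $\Aut{R-R}{\Theta_{xyz}}$ because both maps are $R$-bilinear isomorphisms; hence $\widetilde{\alpha_{x,y,z}}\in\U(\Z)$ is defined by Lemma \ref{lema:4}. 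Everything hinges on one translation principle extracted from Lemma \ref{lema:5}(1): for an automorphism $\gamma$ of $\Theta_v$, the conjugate $\F^{\Theta}_{x,v}\circ(\Theta_x\tensor{R}\gamma)\circ(\F^{\Theta}_{x,v})^{-1}$ of $\Theta_{xv}$ has $\widetilde{(-)}$-class equal to ${}^x\widetilde{\gamma}$ (the $\G$-action \eqref{Eq:action} enters precisely because left multiplication by a scalar must be commuted past an element of $\Theta_x$), whereas right tensoring, $\F^{\Theta}_{v,w}\circ(\gamma\tensor{R}\Theta_w)\circ(\F^{\Theta}_{v,w})^{-1}$, produces $\widetilde{\gamma}$ with no twist. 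Recall also that $\widetilde{(-)}$ is a group isomorphism onto the \emph{commutative} group $\U(\Z)$.

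To prove the cocycle identity I would fix $x,y,z,w$ and consider the five $R$-bilinear isomorphisms $m_1,\dots,m_5$ from $\Theta_x\tensor{R}\Theta_y\tensor{R}\Theta_z\tensor{R}\Theta_w$ to $\Theta_{xyzw}$ coming from the bracketings $((xy)z)w$, $(x(yz))w$, $x((yz)w)$, $x(y(zw))$, $(xy)(zw)$. Since all five share source and target, the ratios $m_i\circ m_j^{-1}$ telescope automatically; the content is to identify each ratio through the definition of $\alpha$ and the translation principle. Reassociating the first three factors (with $\Theta_w$ on the right, no twist) gives $\widetilde{m_1\circ m_2^{-1}}=\widetilde{\alpha_{x,y,z}}$; the top-level reassociations give $\widetilde{m_2\circ m_3^{-1}}=\widetilde{\alpha_{x,yz,w}}$, $\widetilde{m_1\circ m_5^{-1}}=\widetilde{\alpha_{xy,z,w}}$ and $\widetilde{m_5\circ m_4^{-1}}=\widetilde{\alpha_{x,y,zw}}$; and the reassociation of the last three factors, now with $\Theta_x$ tensored on the left, carries the twist: $\widetilde{m_3\circ m_4^{-1}}={}^x\widetilde{\alpha_{y,z,w}}$. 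Applying $\widetilde{(-)}$ to the telescoping identity $m_1\circ m_4^{-1}=(m_1 m_2^{-1})(m_2 m_3^{-1})(m_3 m_4^{-1})=(m_1 m_5^{-1})(m_5 m_4^{-1})$ and using commutativity of $\U(\Z)$ yields exactly ${}^x\widetilde{\alpha_{y,z,w}}\,\widetilde{\alpha_{x,yz,w}}\,\widetilde{\alpha_{x,y,z}}=\widetilde{\alpha_{xy,z,w}}\,\widetilde{\alpha_{x,y,zw}}$, i.e. the $3$-cocycle condition. I expect the careful identification of the single twisted edge (and hence the appearance of ${}^x$) to be the main obstacle. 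Normalization $\widetilde{\alpha_{1,y,z}}=\widetilde{\alpha_{x,1,z}}=\widetilde{\alpha_{x,y,1}}=1$ then follows from \eqref{Eq:unitario}: precomposing the defining equation of $\alpha_{x,y,1}$ with $\Theta_x\tensor{R}\Theta_y\tensor{R}\iota$ and invoking the right-hand diagram of \eqref{Eq:unitario} together with naturality of the canonical isomorphism $M\tensor{R}R\cong M$ collapses both sides to the same map, forcing $\alpha_{x,y,1}=\mathrm{id}$; the positions $1,y,z$ and $x,1,z$ are symmetric, and injectivity of $\widetilde{(-)}$ gives the three identities.

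For the final assertion, given the normalized $\sigma_{-,-}$ with $\widetilde{\alpha_{x,y,z}}=\sigma_{x,yz}\,{}^x\sigma_{y,z}\,\sigma_{x,y}^{-1}\,\sigma_{xy,z}^{-1}$, I would first realize the scalar twist as an automorphism. By Lemma \ref{lema:5}(1), the assignment $\fk{t}\mapsto\sigma_{x,y}(e)\fk{t}$ (for $e\in Unit\{\fk{t}\}$) coincides with $\fk{t}\mapsto\fk{t}\,w(e)$ where $w:={}^{(xy)^{-1}}\sigma_{x,y}$, and the latter is the well defined $R$-bilinear automorphism $\tau_{x,y}:=\sigma_{w}$ of $\Theta_{xy}$ produced by the $\boldsymbol{\alpha}$-construction; one checks $\widetilde{\tau_{x,y}}=\boldsymbol{\alpha}_{[\Theta_{xy}]}(w)={}^{xy}w=\sigma_{x,y}$. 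Thus $\bara{\F}^{\Theta}_{x,y}=\tau_{x,y}\circ\F^{\Theta}_{x,y}$ is at once an $R$-bilinear isomorphism $\Theta_x\tensor{R}\Theta_y\to\Theta_{xy}$, which settles well definedness (the genuine technical point, since it bypasses the choice of $e$).

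It remains to verify the factor-map axioms for $\bara{\F}^{\Theta}$. Writing $\bara{L}_{x,y,z}=\tau_{xy,z}\circ A\circ L_{x,y,z}$ and $\bara{R}_{x,y,z}=\tau_{x,yz}\circ B\circ R_{x,y,z}$, where $A=\F^{\Theta}_{xy,z}\circ(\tau_{x,y}\tensor{R}\Theta_z)\circ(\F^{\Theta}_{xy,z})^{-1}$ and $B=\F^{\Theta}_{x,yz}\circ(\Theta_x\tensor{R}\tau_{y,z})\circ(\F^{\Theta}_{x,yz})^{-1}$, the new associator factors as $\bara{\alpha}_{x,y,z}=\tau_{xy,z}\circ A\circ\alpha_{x,y,z}\circ B^{-1}\circ\tau_{x,yz}^{-1}$. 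By the translation principle $\widetilde{A}=\sigma_{x,y}$ and $\widetilde{B}={}^x\sigma_{y,z}$, so applying $\widetilde{(-)}$ gives $\widetilde{\bara{\alpha}_{x,y,z}}=\sigma_{xy,z}\,\sigma_{x,y}\,\widetilde{\alpha_{x,y,z}}\,({}^x\sigma_{y,z})^{-1}\,\sigma_{x,yz}^{-1}$; substituting the hypothesis, every factor cancels in the commutative group $\U(\Z)$, whence $\widetilde{\bara{\alpha}_{x,y,z}}=1$, i.e. $\bara{\alpha}_{x,y,z}=\mathrm{id}$ and \eqref{asociativa} holds for $\bara{\F}^{\Theta}$. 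Finally, normalization of $\sigma$ gives $\sigma_{x,1}=\sigma_{1,x}=1$, hence $\tau_{x,1}=\tau_{1,x}=\mathrm{id}$ and $\bara{\F}^{\Theta}_{x,1}=\F^{\Theta}_{x,1}$, $\bara{\F}^{\Theta}_{1,x}=\F^{\Theta}_{1,x}$, so the diagrams \eqref{Eq:unitario} persist for $\bara{\F}^{\Theta}$ and the same $\iota$. Together with \eqref{asociativa} this exhibits $\{\bara{\F}^{\Theta}_{x,y}\}$ as a factor map for $\underline{\Theta}$.
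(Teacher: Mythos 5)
Your argument is correct and arrives at exactly the identity the paper derives, but it is organized rather differently. The paper's proof is element-wise: it converts each $\alpha_{x,y,z}$ into left multiplication by $\widetilde{\alpha_{x,y,z}}(e)$ via Lemma \ref{lema:4}, pushes these scalars through a fourfold product $\lr{(\fk{u}_x\fk{u}_y)\fk{u}_z}\fk{u}_t$ using \eqref{Eq:t}, and then needs a chain of nested local-unit decompositions $h,h_1,\dots,h_4$ together with a ``multiply by $\Theta_{t^{-1}}$ and sum'' device to convert the resulting element-wise identity in $\Theta_{xyzt}$ into an identity in $\U(\Z)$. You instead work at the level of automorphisms of $\Theta_{xyzw}$: the five bracketings give five isomorphisms with common source and target, their ratios telescope, and the group isomorphism $\widetilde{(-)}$ of Lemma \ref{lema:4} transports the telescoping identity into the commutative group $\U(\Z)$, with the single twisted edge $\widetilde{m_3\circ m_4^{-1}}={}^x\widetilde{\alpha_{y,z,w}}$ correctly identified. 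This is cleaner and dispenses with the unit bookkeeping, at the price of having to justify your ``translation principle'' (conjugating $\Theta_x\tensor{R}\gamma$ by $\F^{\Theta}_{x,v}$ yields class ${}^x\widetilde{\gamma}$, conjugating $\gamma\tensor{R}\Theta_w$ yields class $\widetilde{\gamma}$), which is itself precisely the content of the paper's manipulation with \eqref{Eq:t} plus the same local-unit verification; so the two proofs are at bottom the same computation, differently packaged. In your favour, you actually verify normalization from \eqref{Eq:unitario}, which the paper only asserts, and you supply the ``routine computation'' the paper omits for the last statement: reducing associativity of $\bara{\F}^{\Theta}$ to the cancellation $\widetilde{\bara{\alpha}_{x,y,z}}=\sigma_{xy,z}\,\sigma_{x,y}\,\widetilde{\alpha_{x,y,z}}\,({}^x\sigma_{y,z})^{-1}\,\sigma_{x,yz}^{-1}=1$ in $\U(\Z)$, and settling well-definedness of the twist $\fk{t}\mapsto\sigma_{x,y}(e)\fk{t}$ by realizing it as $\sigma_w$ for $w={}^{(xy)^{-1}}\sigma_{x,y}$.
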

\begin{proof}
Consider $x, y, z, t \in\G$, and  
$\fk{u}_x \in \Theta_x$, $\fk{u}_y \in \Theta_y$, $\fk{u}_z \in
\Theta_z$ and $\fk{u}_t \in \Theta_t$, denote by
$\fk{u}_x\fk{u}_y$ the image of $\fk{u}_x\tensor{R}\fk{u}_y$ under
$\F^{\Theta}_{x,y}$.
There are several 
isomorphisms from
$\Theta_x\tensor{R}\Theta_y\tensor{R}\Theta_z\tensor{R}\Theta_t$ to
$\Theta_{xyzt}$
which are defined by the family $\F^{\Theta}_{-,-}$. So it will be convenient
to adopt some notations.
In order to do this, we rewrite the stated equation \eqref{Eq:ass1}
satisfied by the $\alpha_{-,-,-}$'s, as follows
$$
\alpha_{x,y,z}\lr{\fk{u}_x(\fk{u}_y\fk{u}_z)}\,\,=\,\,
\lr{(\fk{u}_x\fk{u}_y)\fk{u}_z}.
$$
Using Lemma \ref{lema:4}, this is equivalent to 
$$
\beta_{x,y,z}(e) \lr{\fk{u}_x(\fk{u}_y\fk{u}_z)}\,\,=\,\,
\lr{(\fk{u}_x\fk{u}_y)\fk{u}_z},
$$
where $e \in Unit\{\fk{u}_x, \fk{u}_y, \fk{u}_z\}$ and
$\beta_{-,-,-}:= \widetilde{\alpha_{-,-,-}}$
is the image of $\alpha_{x,y,z}$ under the homomorphism of groups
defined in Lemma \ref{lema:4}.
In this way, if we fix a unit $e \in Unit\{\fk{u}_x, \fk{u}_y,
\fk{u}_z, \fk{u}_t\}$, then we have
\begin{eqnarray*}
\lr{(\fk{u}_x\fk{u}_y)\fk{u}_z }\fk{u}_t &=& \beta_{x,y,z}(e)
\lr{\fk{u}_x(\fk{u}_y\fk{u}_z) }\fk{u}_t \\
&=& \beta_{x,y,z}(e)\beta_{x,yz,t}(e)
\lr{\fk{u}_x\lr{(\fk{u}_y\fk{u}_z) \fk{u}_t}} \\
&=& \beta_{x,y,z}(e)\beta_{x,yz,t}(e)
\lr{\fk{u}_x\lr{\beta_{y,z,t}(e)\lr{\fk{u}_y(\fk{u}_z \fk{u}_t)}}}
\\
&=& \beta_{x,y,z}(e)\beta_{x,yz,t}(e) \lr{\fk{u}_x
\beta_{y,z,t}(e)\lr{\fk{u}_y(\fk{u}_z \fk{u}_t)}}\\
&\overset{\eqref{Eq:t}}{=}& \beta_{x,y,z}(e)\beta_{x,yz,t}(e)
\lr{{}^x\beta_{y,z,t}(e)\fk{u}_x \lr{\fk{u}_y(\fk{u}_z \fk{u}_t)}}\\
&=& \beta_{x,y,z}(e)\beta_{x,yz,t}(e)\, {}^x\beta_{y,z,t}(e)
\lr{\fk{u}_x \lr{\fk{u}_y(\fk{u}_z \fk{u}_t)}}\\
&=& \beta_{x,y,z}(e)\beta_{x,yz,t}(e)\, {}^x\beta_{y,z,t}(e)
\beta^{-1}_{x,y,zt}(e)\lr{(\fk{u}_x \fk{u}_y)(\fk{u}_z \fk{u}_t)}\\
&=& \beta_{x,y,z}(e)\beta_{x,yz,t}(e) \,{}^x\beta_{y,z,t}(e)
\beta^{-1}_{x,y,zt}(e) \beta^{-1}_{xy,z,t}(e)
\lr{(\fk{u}_x \fk{u}_y)\fk{u}_z} \fk{u}_t.
\end{eqnarray*}
Given $h \in\Sf{E}$, using the above notations, we can consider the following units
\begin{eqnarray*}
h \,=\, \sum_{(h)}x_{h}\bara{x}_{h}, & & h_1 \in Unit\{x_{h}, \bara{x}_{h}\};  \\
h_1 \,=\, \sum_{(h_1)}y_{h_1}\bara{y}_{h_1}, & & h_2 \in Unit\{y_{h_1},\bara{y}_{h_1}\};  \\
h_2 \,=\, \sum_{(h_2)}z_{h_2}\bara{z}_{h_2}, & & h_3 \in Unit\{z_{h_2},\bara{z}_{h_2}\};  \\
h_3 \,=\, \sum_{(h_3)}t_{h_3}\bara{t}_{h_3}, & & h_4 \in Unit\{t_{h_3},\bara{t}_{h_3}\} . 
\end{eqnarray*}
It is clear that $h_4$ is a  unit for all the handled elements, in particular  $h_4 \in Unit\{x_h, y_{h_1}, z_{h_2}, t_{h_3}\}$. According to the equality showed previously, for every set $\{x_h, y_{h_1}, z_{h_2}, t_{h_3}\}$, we have
$$
\lr{(x_{h}y_{h_1})z_{h_2} }t_{h_3}\,\,=\,\,
\beta_{x,y,z}(h_4)\beta_{x,yz,t}(h_4) \,{}^x\beta_{y,z,t}(h_4)
\beta^{-1}_{x,y,zt}(h_4) \beta^{-1}_{xy,z,t}(h_4)
\lr{(x_{h} y_{h_1})z_{h_2}} t_{h_3}.
$$
Using the map $\F^{\Theta}_{t,t^{-1}}$, and summing up, we get 
$$
\sum_{(h_3)}\lr{(x_{h}y_{h_1})z_{h_2} }t_{h_3}\bara{t}_{h_3}\,=\,
\beta_{x,y,z}(h_4)\beta_{x,yz,t}(h_4) \,{}^x\beta_{y,z,t}(h_4)
\beta^{-1}_{x,y,zt}(h_4) \beta^{-1}_{xy,z,t}(h_4)
\lr{(x_{h} y_{h_1})z_{h_2}} t_{h_3}\bara{t}_{h_3},
$$
which means that 
$$
(x_{h}y_{h_1})z_{h_2} \,\,=\,\,
\beta_{x,y,z}(h_4)\beta_{x,yz,t}(h_4) \,{}^x\beta_{y,z,t}(h_4)
\beta^{-1}_{x,y,zt}(h_4) \beta^{-1}_{xy,z,t}(h_4)
(x_{h} y_{h_1})z_{h_2}, \,\,\, \text{equality in } \Theta_{xyz}.
$$
Repeating thrice the same process, we end up with
\begin{eqnarray*}
h &=&
\beta_{x,y,z}(h_4)\beta_{x,yz,t}(h_4) \,{}^x\beta_{y,z,t}(h_4)
\beta^{-1}_{x,y,zt}(h_4) \beta^{-1}_{xy,z,t}(h_4) h \\
&=& \beta_{x,y,z}(h)\beta_{x,yz,t}(h) \,{}^x\beta_{y,z,t}(h)
\beta^{-1}_{x,y,zt}(h) \beta^{-1}_{xy,z,t}(h),
\end{eqnarray*}
since $h_4h=hh_4=h$ and the involved maps are $R$-bilinear. Therefore,  for any unit $h \in \Sf{E}$, we have
$$
\beta_{xy,z,t} \circ \beta_{x,y,zt}(h)\,\,=\,\,{}^x\beta_{y,z,t}
\circ \beta_{x,yz,t} \circ \beta_{x,y,z}(h).
$$
Since the $\beta_{-,-,-}$'s are $R$-bilinear, this equality implies
the $3$-cocycle condition, that is,
$$
\beta_{xy,z,t} \circ \beta_{x,y,zt}\,\,=\,\,{}^x\beta_{y,z,t} \circ
\beta_{x,yz,t} \circ \beta_{x,y,z}, \quad \text{for every } x,y,z,t
\in\G.
$$
A routine computation, using equation \eqref{Eq:t}, shows the last statement.
\end{proof}

\begin{corollary}\label{coro:11}
Let $R$ be a ring with local units and $\G$ any group, and fix a factor map $\{\Theta_x\}_{x\, \in \G}$ for $R$ and $\G$ with its associated cohomology groups. Assume that there is a family of invertible unital $R$-bimodules
$\{\Omega_x\}_{x\, \in\, \G}$ such that $\Omega_x \sim\Theta_x$
(they are similar), for every $x \in \G$,
with a family of $R$-bilinear isomorphisms
$$\F_{x,y}: \Omega_x \tensor{R} \Omega_y \longrightarrow
\Omega_{xy}.$$
For $x, y, z \in \G$, we denote by $\alpha_{x,y,z}: \Omega_{xyz} \to
\Omega_{xyz}$ the resulting isomorphisms which satisfy
\begin{equation}\label{Eq:ass}
\alpha_{x,y,z} \circ \F_{x, yz} \circ
\lr{\Omega_x\tensor{R}\F_{y,z}} \,\,=\,\, \F_{xy,z} \circ
\lr{\F_{x,y}\tensor{R}\Omega_z}.
\end{equation}
Then $\widetilde{\alpha_{x,y,z}}$ defines an element of $Z_{\Theta}^3(\G,\,
\mathcal{U}(\Z))$,
where $\widetilde{\alpha_{x,y,z}}$ is the image of $\alpha_{x,y,z}$
under the isomorphism
of groups stated in Lemma \ref{lema:4}.
\end{corollary}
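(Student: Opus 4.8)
The plan is to adapt, almost verbatim, the first part of the proof of Proposition \ref{prop:3}, the only structural change being to replace the commutation rule \eqref{Eq:t} by its version \eqref{Eq:t1} for similar bimodules. First I would record that, since each $\Omega_x$ is an invertible unital $R$-bimodule, Lemma \ref{lema:4} applies and $\widetilde{\alpha_{x,y,z}}\in\U(\Z)$ is well defined; write $\beta_{x,y,z}:=\widetilde{\alpha_{x,y,z}}$. Via Lemma \ref{lema:4}, the defining relation \eqref{Eq:ass} of $\alpha_{x,y,z}$ translates, on elements $\fk{u}_x\in\Omega_x$, $\fk{u}_y\in\Omega_y$, $\fk{u}_z\in\Omega_z$ (writing $\fk{u}_x\fk{u}_y$ for $\F_{x,y}(\fk{u}_x\tensor{R}\fk{u}_y)$), into the scalar identity $\beta_{x,y,z}(e)\,(\fk{u}_x(\fk{u}_y\fk{u}_z))=(\fk{u}_x\fk{u}_y)\fk{u}_z$ for $e\in Unit\{\fk{u}_x,\fk{u}_y,\fk{u}_z\}$, exactly as in Proposition \ref{prop:3}.

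Next I would run the same fourfold associativity computation on $((\fk{u}_x\fk{u}_y)\fk{u}_z)\fk{u}_t$ with $\fk{u}_t\in\Omega_t$: reassociating in the two possible ways yields
\[
((\fk{u}_x\fk{u}_y)\fk{u}_z)\fk{u}_t \,=\, \beta_{x,y,z}(e)\,\beta_{x,yz,t}(e)\,{}^x\beta_{y,z,t}(e)\,\beta^{-1}_{x,y,zt}(e)\,\beta^{-1}_{xy,z,t}(e)\,((\fk{u}_x\fk{u}_y)\fk{u}_z)\fk{u}_t .
\]
The single step in this chain where the action appears is the passage of $\beta_{y,z,t}(e)$ past $\fk{u}_x$; here I invoke \eqref{Eq:t1} rather than \eqref{Eq:t}. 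This is exactly where similarity is used: because $\Omega_x\sim\Theta_x$, Lemma \ref{lema:5}(2) gives $\fk{u}_x\,v(e)={}^xv(e)\,\fk{u}_x$ with ${}^x$ the $\G$-action \eqref{Eq:action} attached to $\underline{\Theta}$. Consequently the action produced is the $\Theta$-action, and the resulting cocycle will live in $Z^3_\Theta$, not in any cohomology associated to the $\Omega$'s.

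To upgrade the element-wise identity to an identity of units of $\Z$, I would reproduce the decomposition-and-peel argument of Proposition \ref{prop:3}, now carried out with respect to the invertible bimodules $\Omega_x$: decompose a local unit $h$ successively through $\Omega_x,\Omega_y,\Omega_z,\Omega_t$, choose $h_4\in Unit\{x_h,y_{h_1},z_{h_2},t_{h_3}\}$, and peel off the tensor factors one at a time using the isomorphisms $\F_{xyzt,t^{-1}},\F_{xyz,z^{-1}},\dots$. Here one uses that $\Omega_1\cong R$ (an invertible bimodule with $\Omega_1\tensor{R}\Omega_1\cong\Omega_1$, via $\F_{1,1}$, is necessarily $\cong R$), so that $\F_{t,t^{-1}}$ realizes $\Omega_{t^{-1}}$ as the inverse of $\Omega_t$ and the partial sums $\sum t_{h_3}\bara{t}_{h_3}$ return the appropriate local unit. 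Peeling four times and using $R$-bilinearity of the $\beta$'s yields $h=\beta_{x,y,z}(h)\beta_{x,yz,t}(h)\,{}^x\beta_{y,z,t}(h)\,\beta^{-1}_{x,y,zt}(h)\beta^{-1}_{xy,z,t}(h)$ for every local unit $h$, which is precisely the $3$-cocycle condition $\beta_{xy,z,t}\circ\beta_{x,y,zt}={}^x\beta_{y,z,t}\circ\beta_{x,yz,t}\circ\beta_{x,y,z}$ with respect to the $\Theta$-action. Hence $\widetilde{\alpha_{x,y,z}}\in Z^3_\Theta(\G,\U(\Z))$.

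The main obstacle --- or rather the one genuinely conceptual point --- is making sure the action is the right one: a priori the $\Omega$'s carry their own twisting, yet they do not form a homomorphism into $\Picar{R}$ (the $\F$'s need not be associative), so there is no ``$\Omega$-cohomology'' to speak of. The content is that the similarity $\Omega_x\sim\Theta_x$ forces, through Lemma \ref{lema:5}(2), exactly the $\Theta$-action, and this is what deposits the cocycle in the fixed group $Z^3_\Theta$. Note also that, unlike in Proposition \ref{prop:3}, we do not assume the unitary constraints \eqref{Eq:unitario} for the $\F_{x,y}$, and correspondingly we do not claim that $\widetilde{\alpha_{x,y,z}}$ is normalized --- only that it is a $3$-cocycle.
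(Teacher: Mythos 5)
Your proposal is correct and follows exactly the route the paper intends: its proof of this corollary is literally ``a direct consequence of Proposition \ref{prop:3} and Lemma \ref{lema:5}(2)'', i.e.\ rerun the associativity/peeling argument of Proposition \ref{prop:3} with \eqref{Eq:t} replaced by \eqref{Eq:t1} so that the similarity $\Omega_x\sim\Theta_x$ forces the $\Theta$-action. Your additional remarks (that $\Omega_1\cong R$, and that normalization is not claimed absent the unitary constraints) are accurate refinements of the same argument rather than a different approach.
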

\begin{proof}
This is a direct consequence of Proposition \ref{prop:3} and Lemma \ref{lema:5}(2).
\end{proof}

We need to clarify the situation when two different classes of representatives are chosen.

\begin{proposition}\label{prop:2}
Let $R$ be a ring with a set of local units $\Sf{E}$ and $\G$ any group. Consider  two factor maps  $\{\F^{\Theta}_{x,y}\}_{x,y\, \in \G}$ and $\{\F^{\Gamma}_{x,y}\}_{x,y\, \in \G}$ such that, for every
$x \in \G$, there exists an $R$-bilinear
isomorphism $\Sf{a}_x: \Gamma_x \overset{\cong}{\rightarrow}\Theta_x
$.
Denote by $\tau_{x,y}$, where $x, y \in \G$, the following
$R$-bilinear isomorphism
$$
\xymatrix{\tau_{x,y}: \Theta_{xy}
\ar@{->}^-{\F^{\Theta}_{x,y}{}^{-1}}[rr] & &
\Theta_x\tensor{R}\Theta_y \ar@{->}^-{\cong}[rr] & &
\Gamma_x\tensor{R}\Gamma_y \ar@{->}^-{\F^{\Gamma}_{x,y}}[rr]
& & \Gamma_{xy} \ar@{->}^-{\cong}[rr] & & \Theta_{xy}, }
$$ 
That is,  
$$\tau_{x,y} \circ \F^{\Theta}_{x,y} \circ
(\Sf{a}_x\tensor{R}\Sf{a}_y)\,\,=\,\,
\Sf{a}_{xy} \circ \F^{\Gamma}_{x,y}.$$ 
Then $\widetilde{\tau_{x,y}}$ defines a normalized element of
$Z_{\Theta}^2(\G,\, \mathcal{U}(\Z))$,
where $\widetilde{\tau_{x,y}}$ is the image of $\tau_{x,y}$ under
the isomorphism of groups
stated in Lemma \ref{lema:4}.
\end{proposition}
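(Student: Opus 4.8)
The plan is to mirror the element-wise computation of Proposition \ref{prop:3}, exploiting that both $\{\F^{\Theta}_{x,y}\}$ and $\{\F^{\Gamma}_{x,y}\}$ are \emph{factor maps}, so the two induced multiplications on the respective direct sums are associative (diagram \eqref{asociativa} with trivial $\alpha$). Write $\mu_{x,y} := \widetilde{\tau_{x,y}} \in \U(\Z)$. First I would translate the defining identity of $\tau_{x,y}$ into elements: for $\fk{u}_x \in \Gamma_x$ and $\fk{u}_y \in \Gamma_y$, the relation $\tau_{x,y}\circ\F^{\Theta}_{x,y}\circ(\Sf{a}_x\tensor{R}\Sf{a}_y) = \Sf{a}_{xy}\circ\F^{\Gamma}_{x,y}$ together with Lemma \ref{lema:4} yields
$$\Sf{a}_{xy}\lr{\F^{\Gamma}_{x,y}(\fk{u}_x\tensor{R}\fk{u}_y)} \,=\, \mu_{x,y}(e)\,\Sf{a}_x(\fk{u}_x)\,\Sf{a}_y(\fk{u}_y),$$
where $e\in Unit\{\Sf{a}_x(\fk{u}_x),\Sf{a}_y(\fk{u}_y)\}$ and juxtaposition denotes the $\F^{\Theta}$-multiplication.

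Next, fix $x,y,z\in\G$, elements $\fk{u}_x,\fk{u}_y,\fk{u}_z$ in $\Gamma_x,\Gamma_y,\Gamma_z$, and a common unit $e$ for all the $\Sf{a}$-images involved. I would compute $\Sf{a}_{xyz}$ applied to the $\F^{\Gamma}$-triple product $\fk{u}_x\fk{u}_y\fk{u}_z$ — unambiguous because $\F^{\Gamma}$ is a factor map — in the two possible ways. Associating as $(\fk{u}_x\fk{u}_y)\fk{u}_z$ and applying the element identity twice gives $(\mu_{xy,z}\cdot\mu_{x,y})(e)\,W$, where $W=\Sf{a}_x(\fk{u}_x)\Sf{a}_y(\fk{u}_y)\Sf{a}_z(\fk{u}_z)$ is the $\F^{\Theta}$-triple product (well defined since $\F^{\Theta}$ is a factor map), and where I use that composition in $\Z$ evaluated at the unit $e$ coincides with multiplication in $R$. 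Associating as $\fk{u}_x(\fk{u}_y\fk{u}_z)$ produces instead $\mu_{x,yz}(e)\,\Sf{a}_x(\fk{u}_x)\,\big[\mu_{y,z}(e)\,\Sf{a}_y(\fk{u}_y)\Sf{a}_z(\fk{u}_z)\big]$; here the crucial move is to commute the scalar $\mu_{y,z}(e)$ to the far left across $\Sf{a}_x(\fk{u}_x)\in\Theta_x$ by \eqref{Eq:t}, which replaces it by its $\G$-twist ${}^x\mu_{y,z}(e)$ and yields $(\mu_{x,yz}\cdot{}^x\mu_{y,z})(e)\,W$. Equating the two expansions gives $(\mu_{xy,z}\cdot\mu_{x,y})(e)\,W=(\mu_{x,yz}\cdot{}^x\mu_{y,z})(e)\,W$ for every such $W$.

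It then remains to cancel $W$ and pass to an identity in $\Z$. This extraction is the technical heart of the argument and is carried out exactly as the ``peeling'' step of Proposition \ref{prop:3}: since elements of the form $W$ additively generate $\Theta_{xyz}$, for an arbitrary unit $h\in\Sf{E}$ I would decompose $h=\sum_{(h)}p_h q_h$ with respect to the invertible bimodule $\Theta_{xyz}$, apply the previous equality to each $p_h$ (with a unit $h_1\in Unit\{p_h,q_h\}$), multiply on the right by $q_h$ and sum. Using $R$-bilinearity and the fact that $h_1$ is then also a unit for $h$, this collapses to $(\mu_{xy,z}\cdot\mu_{x,y})(h)=(\mu_{x,yz}\cdot{}^x\mu_{y,z})(h)$ for all units $h$, whence $\mu_{xy,z}\cdot\mu_{x,y}=\mu_{x,yz}\cdot{}^x\mu_{y,z}$ in $\U(\Z)$. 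As $\U(\Z)$ is abelian, this is precisely the $2$-cocycle condition ${}^x\mu_{y,z}\cdot\mu_{x,yz}\cdot\mu_{xy,z}^{-1}\cdot\mu_{x,y}^{-1}=1$, matching the coboundary convention used for $\widetilde{\alpha_{-,-,-}}$ in Proposition \ref{prop:3}.

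Finally, normalization $\widetilde{\tau_{x,1}}=\widetilde{\tau_{1,x}}=1$ follows from a direct check using the unitality diagrams \eqref{Eq:unitario} for both factor maps, after fixing $\Sf{a}_1$ to be compatible with the structural isomorphisms $R\cong\Theta_1$ and $R\cong\Gamma_1$, so that $\tau_{x,1}$ and $\tau_{1,x}$ become the identity of $\Theta_x$. The main obstacle is the cancellation/peeling step: as in Proposition \ref{prop:3}, keeping rigorous track of the local units — in particular justifying that the two-sided unit $h_1$ arising from the decomposition of $h$ is again a unit for $h$, and that evaluation at a unit turns composition in $\Z$ into multiplication in $R$ — is what makes the passage from the element identity to the equality of cocycles delicate in the non-unital setting.
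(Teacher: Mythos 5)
Your proposal is correct and follows essentially the same route as the paper: the paper derives the unit-evaluated identity ${}^x\widetilde{\tau}_{y,z}(h)\widetilde{\tau}_{x,yz}(h)=\widetilde{\tau}_{x,y}(h)\widetilde{\tau}_{xy,z}(h)$ by exactly the element-wise two-way association and local-unit peeling of Proposition \ref{prop:3} (which it explicitly defers to), and then upgrades it to an identity in $\U(\Z)$ by $R$-bilinearity, just as you do. Your only deviations are cosmetic — a single decomposition with respect to $\Theta_{xyz}$ after extending the identity additively, instead of the paper's iterated decomposition, and a more explicit treatment of the normalization via $\Sf{a}_1$, which the paper dismisses as holding ``by definition.''
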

\begin{proof}
Fix a set of elements $x, y, z \in \G$. Assume that  we have 
\begin{equation}\label{Eq:h}
{}^x\widetilde{\tau_{x, yz}}(h)
\widetilde{\tau_{x,yz}}(h)\,\,=\,\,\widetilde{\tau_{x,y}}(h)
\widetilde{\tau_{xy,z}}(h), \quad \text{for every unit }
 h \in \Sf{E}. 
\end{equation}
So taking any element $r \in R$ with unit $f \in Unit\{r\}$, we
obtain
\begin{eqnarray*}
\widetilde{\tau_{x,yz}} \circ {}^x\widetilde{\tau_{x, yz}}(r) &=&
\widetilde{\tau_{x,yz}} \circ {}^x\widetilde{\tau_{x, yz}}(f r) \\
&=&
{}^x\widetilde{\tau_{x, yz}}(f) \widetilde{\tau_{x,yz}}(r) \\ &=&
{}^x\widetilde{\tau_{x, yz}}(f) \widetilde{\tau_{x,yz}}(f) r \\
&\overset{\eqref{Eq:h}}{=}& \widetilde{\tau_{x,y}}(f)
\widetilde{\tau_{xy,z}}(f) r \\ &=& \widetilde{\tau_{xy,z}}
\lr{\widetilde{\tau_{x,y}}(f)r}
\\ &=& \widetilde{\tau_{xy,z}} \circ \widetilde{\tau_{x,y}}(r),
\end{eqnarray*}
which shows that $\widetilde{\tau_{-,-}}$ is a $2$-cocycle. 
In fact $\widetilde{\tau_{-,-}}$ is by definition a normalized
$2$-cocycle.
Equation \eqref{Eq:h} is fulfilled by following analogue steps 
as in the proof of Proposition \ref{prop:3}. 
\end{proof}

\begin{proposition}\label{prop:31}
The hypothesis are that of Corollary \ref{coro:11}. Assume further that
there are
two families of invertible unital $R$-bimodules 
$\{\Omega_x\}_{x\, \in\, \G}$ and $\{ \Gamma_x\}_{x \, \in\, \G}$ as
in Corollary \ref{coro:11} (i.e. $\Gamma_x \sim \Theta_x \sim \Omega_x$),
together  with families of $R$-bilinear isomorphisms
$$\F^{\Omega}_{x,y}: \Omega_x \tensor{R} \Omega_y \longrightarrow
\Omega_{xy},\quad
\F^{\Gamma}_{x,y}: \Gamma_x \tensor{R} \Gamma_y \longrightarrow
\Gamma_{xy}. $$
Assume also that there are $R$-bilinear isomorphisms $\Sf{a}_x:
\Omega_x \to \Gamma_x$, $x \in \G$.
Consider the associated $3$-cocycles 
$\beta^{\Omega}_{-,-,-},\, \beta^{\Gamma}_{-,-,-} \in Z_{\Theta}^3(\G,\,
\mathcal{U}(\Z))$ given by
Proposition \ref{prop:3}. Then, 
$$
\beta^{\Omega}_{-,-,-} \circ \lr{\beta^{\Gamma}_{-,-,-}}^{-1} \in
B_{\Theta}^2(\G,\, \mathcal{U}(\Z)).
$$
That is, they are cohomologous, or
$[\beta^{\Omega}_{-,-,-}]\,\,=\,\, [\beta^{\Gamma}_{-,-,-}]$ in
$H_{\Theta}^3(\G,\, \mathcal{U}(\Z))$.
\end{proposition}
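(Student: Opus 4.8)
The plan is to measure, by a single $2$-cochain, the failure of the isomorphisms $\Sf{a}_x$ to intertwine the two multiplications, and to show that the coboundary of this cochain is exactly $\beta^{\Omega}_{-,-,-}\circ(\beta^{\Gamma}_{-,-,-})^{-1}$. For $x,y\in\G$, let $\tau_{x,y}$ be the $R$-bilinear automorphism of $\Gamma_{xy}$ determined by
\[
\tau_{x,y}\circ \Sf{a}_{xy}\circ \F^{\Omega}_{x,y}\,=\,\F^{\Gamma}_{x,y}\circ\lr{\Sf{a}_x\tensor{R}\Sf{a}_y},
\]
and put $t_{x,y}:=\widetilde{\tau_{x,y}}\in\U(\Z)$ via Lemma \ref{lema:4} (legitimate since $\Gamma_{xy}\sim\Theta_{xy}$); this is a $2$-cochain for the $\G$-action \eqref{Eq:action}. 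Writing $\fk{u}_x\fk{u}_y:=\F^{\Omega}_{x,y}(\fk{u}_x\tensor{R}\fk{u}_y)$ and $\fk{v}_x\star\fk{v}_y:=\F^{\Gamma}_{x,y}(\fk{v}_x\tensor{R}\fk{v}_y)$, the defining relations translate, through the ``In particular'' clause of Lemma \ref{lema:4}, into the scalar identities
\[
\Sf{a}_{xy}(\fk{u}_x\fk{u}_y)\,=\,t_{x,y}(e)^{-1}\lr{\Sf{a}_x(\fk{u}_x)\star\Sf{a}_y(\fk{u}_y)},
\]
\[
(\fk{u}_x\fk{u}_y)\fk{u}_z\,=\,\beta^{\Omega}_{x,y,z}(e)\,\fk{u}_x(\fk{u}_y\fk{u}_z),\qquad
(\fk{v}_x\star\fk{v}_y)\star\fk{v}_z\,=\,\beta^{\Gamma}_{x,y,z}(e)\,\fk{v}_x\star(\fk{v}_y\star\fk{v}_z),
\]
valid whenever $e$ is a common two sided unit of the elements involved, exactly as in the proofs of Propositions \ref{prop:3} and \ref{prop:2}.

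The core of the argument is to evaluate $\Sf{a}_{xyz}\lr{(\fk{u}_x\fk{u}_y)\fk{u}_z}$ in two ways, for $\fk{u}_x\in\Omega_x$, $\fk{u}_y\in\Omega_y$, $\fk{u}_z\in\Omega_z$ and a common unit $e$. On one side, I would push the $\Sf{a}$'s inward through the two outer products, using the first displayed identity twice to convert the $\Omega$-product into the $\Gamma$-product, then apply $\Gamma$-associativity; this yields
\[
\Sf{a}_{xyz}\lr{(\fk{u}_x\fk{u}_y)\fk{u}_z}\,=\,t_{xy,z}(e)^{-1}t_{x,y}(e)^{-1}\beta^{\Gamma}_{x,y,z}(e)\,\Sf{a}_x(\fk{u}_x)\star\lr{\Sf{a}_y(\fk{u}_y)\star\Sf{a}_z(\fk{u}_z)}.
\]
On the other side, I would first use $\Omega$-associativity to pass to $\fk{u}_x(\fk{u}_y\fk{u}_z)$, then transport by $\Sf{a}$. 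Here the decisive step occurs: after writing $\Sf{a}_{yz}(\fk{u}_y\fk{u}_z)=t_{y,z}(e)^{-1}\lr{\Sf{a}_y(\fk{u}_y)\star\Sf{a}_z(\fk{u}_z)}$, the inner scalar $t_{y,z}(e)^{-1}$ sits to the left of a factor of $\Gamma_{yz}$ and must be moved in front of the whole product of $\Gamma_{xyz}$; crossing the leading factor $\Sf{a}_x(\fk{u}_x)\in\Gamma_x\sim\Theta_x$ invokes Lemma \ref{lema:5}(2), which replaces it by its twist ${}^x t_{y,z}(e)^{-1}$. This produces
\[
\Sf{a}_{xyz}\lr{(\fk{u}_x\fk{u}_y)\fk{u}_z}\,=\,\beta^{\Omega}_{x,y,z}(e)\,t_{x,yz}(e)^{-1}\,{}^x t_{y,z}(e)^{-1}\,\Sf{a}_x(\fk{u}_x)\star\lr{\Sf{a}_y(\fk{u}_y)\star\Sf{a}_z(\fk{u}_z)}.
\]

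Equating the two expressions and cancelling the common invertible element of $\Gamma_{xyz}$, the identity $(uv)(e)=u(e)v(e)$ together with the commutativity of $\U(\Z)$ gives, at the level of the unit $e$,
\[
\beta^{\Omega}_{x,y,z}(e)\,\beta^{\Gamma}_{x,y,z}(e)^{-1}\,=\,{}^x t_{y,z}(e)\,t_{x,yz}(e)\,t_{x,y}(e)^{-1}\,t_{xy,z}(e)^{-1}.
\]
Since all maps in sight are $R$-bilinear, I would then upgrade this unit-level equality to an identity in $\U(\Z)$, exactly as the passage from ``$h=\dots$'' to the cocycle equation at the end of the proof of Proposition \ref{prop:3}. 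The right-hand side is precisely the coboundary of the $2$-cochain $t_{-,-}$ for the action \eqref{Eq:action}, in the form used in Proposition \ref{prop:3}; hence $\beta^{\Omega}_{-,-,-}\circ(\beta^{\Gamma}_{-,-,-})^{-1}\in B^3_{\Theta}(\G,\U(\Z))$, which is the claim (when both $\beta$'s are trivial this recovers that $\widetilde{\tau_{-,-}}$ is a cocycle, as in Proposition \ref{prop:2}). The main obstacle is the bookkeeping of local units --- choosing one common two sided unit that serves all the finitely many elements at every stage --- and, above all, correctly executing the single step where the inner scalar is dragged across the leading tensor factor: it is this application of the $\G$-action in Lemma \ref{lema:5}(2) that creates the twisted term ${}^x t_{y,z}$ and thereby makes the difference a genuine coboundary rather than zero.
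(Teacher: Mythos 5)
Your proof is correct and follows essentially the same route as the paper: the paper introduces the same defect automorphisms $\Sf{b}_{xy}$ (placed on $\Omega_{xy}$ rather than $\Gamma_{xy}$, which yields the same element $\gamma_{x,y}=\widetilde{\Sf{b}_{xy}}$ of $\U(\Z)$ as your $t_{x,y}$) and asserts as a ``routine computation'' exactly the two-way evaluation of $\Sf{a}_{xyz}\lr{(\fk{u}_x\fk{u}_y)\fk{u}_z}$ that you carry out, arriving at the inverse of your identity. The only caveat is that ``cancelling the common invertible element of $\Gamma_{xyz}$'' cannot be done for a single triple of elements and must be replaced by the unit-decomposition-and-summation device from the proof of Proposition \ref{prop:3}, as you yourself indicate in the closing paragraph.
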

\begin{proof}
Let us denote by $\alpha^{\Omega}_{-,-,-}$ and
$\alpha^{\Gamma}_{-,-,-}$ the
$R$-bilinear isomorphisms satisfying equation \eqref{Eq:ass},
respectively, for
$\{\Omega_x\}_{x\, \in\, \G}$ and $\{ \Gamma_x\}_{x \, \in \G}$. 
For any pair $x, y \,\in \G$, we consider the $R$-bilinear
isomorphism
$\Sf{b}_{xy}:\Omega_{xy} \to \Omega_{xy}$ defined by the following
equation
\begin{equation}\label{Eq:ab}
\Sf{a}_{xy} \circ \Sf{b}_{xy} \circ \F^{\Omega}_{x,y} \,\,=\,\,
\F^{\Gamma}_{x,y} \circ
\lr{\Sf{a}_x\tensor{R}\Sf{a}_y}.
\end{equation}
We denote by $\gamma_{x,y}$, $x, y \in\G$, the image of
$\Sf{b}_{xy}$ under the homomorphism
of groups defined in Lemma \ref{lema:4}.

Fix $x, y, z \in \G$, and consider $(\Sf{t}_x, \Sf{t}_y, \Sf{t}_z) 
\in \Omega_x \times \Omega_y \times \Omega_z$ and 
$(\Sf{u}_x, \Sf{u}_y, \Sf{u}_z) 
\in \Gamma_x \times \Gamma_y \times \Gamma_z$ with a common unit $e
\in Unit\{ \Sf{t}_x, \Sf{t}_y, \Sf{t}_z,
\Sf{u}_x, \Sf{u}_y, \Sf{u}_z\}$. Using the notation of the proof of
Proposition \ref{prop:3}, we can write
\begin{eqnarray}
\beta^{\Omega}_{x,y,z}(e)\, \Sf{t}_x(\Sf{t}_y \Sf{t}_z) &=&
(\Sf{t}_x \Sf{t}_y) \Sf{t}_z. \label{Eq:betaT} \\
\beta^{\Gamma}_{x,y,z}(e)\, \Sf{u}_x(\Sf{u}_y \Sf{u}_z) &=&
(\Sf{u}_x \Sf{u}_y) \Sf{u}_z. \label{Eq:betaU} \\
\gamma_{x,y}(e)\, \Sf{a}_{xy}(\Sf{t}_x \Sf{t}_y) &=& \Sf{a}_x
(\Sf{t}_x) \Sf{a}_y(\Sf{t}_y). \label{Eq:gammaA}
\end{eqnarray}
A routine computation as in the proof of Proposition \ref{prop:3}
using equation \eqref{Eq:betaT}, \eqref{Eq:betaU}
and \eqref{Eq:gammaA}, show that, for every unit $h \in \Sf{E}$, we
have
$$
\lr{\beta^{\Omega}_{x,y,z}(h)}^{-1} \beta^{\Gamma}_{x,y,z}(h)
\,\,=\,\, \lr{\gamma_{x,yz}(h)}^{-1}
\lr{{}^x\gamma_{y,z}(h)}^{-1} \gamma_{x,y}(h) \gamma_{xy,z}(h).
$$
This implies that 
$$
\beta^{\Gamma}_{x,y,z}\circ \lr{\beta^{\Omega}_{x,y,z}}^{-1}
\,\,=\,\,
\gamma_{xy,z} \circ \gamma_{x,y} \circ \lr{{}^x\gamma_{y,z}}^{-1}
\circ \lr{\gamma_{x,yz}}^{-1},
\text{ in } \mathcal{U}(\Z),
$$
which finishes the proof.
\end{proof}

Now we are able to give the right definition of generalized crossed product.
We hope it will result cleaner than the one  
considered in \cite{Kanzaki:1968,Miyashita:1973} for  rings with identity.
In the sequel, $R$ denotes a ring with a set of local units $\Sf{E}$.

\begin{definition}
Given a factor map as in Definition \ref{factormap}
\[
\{ \F^{\Theta}_{x,y} : \Theta_x
\tensor{R} \Theta_y \overset{\cong}{\longrightarrow} \Theta_{xy},\,\,
x,y \in \G \},
\]
with an isomorphism of $R$-bimodules $\iota : R \rightarrow \Theta_1$, 
we define its associated \emph{generalized crossed product} $\Delta (\Theta) = \bigoplus_{x \in \G } \Theta_x$ with multiplication $$\theta_x \theta_y = \F^{\Theta}_{x,y} (\theta_x \tensor{} \theta_y), \text{  for } \theta_x \in \Theta_x, \theta_y \in \Theta_y.$$ 
\end{definition}

It follows that $\Theta_1$ is a subring of $\Delta (\Theta)$ and the map $\iota : R \to \Theta_1$ is a ring isomorphism. Therefore, $\iota (\mathsf{E})$ is a set of local units for $\Theta_1$ that serves as well as a set of local units for $\Delta (\Theta)$. Normally, we will identify $R$ with $\Theta_1$, and, thus, $\mathsf{E}$ will be a set of local units for the generalised crossed product $\Delta (\Theta)$.  Obviously, $\Delta (\Theta)$ is a $\G$--graded ring such that $\Theta_x \Theta_y = \Theta_{xy}$ for every $x, y \in \G$ (thus, it could be referred to as a \emph{strongly graded} ring after \cite{Nastasescu/Oystaeyen:82}).
For instance, if we take $\Theta$ as in Example \ref{Example:B-Rio}, then $\Delta(\Theta)$ is the  skew group ring $R*\G$, see \cite{Beattie/Del Rio:1999}.

\begin{remark}
A factor map as in Definition \ref{factormap} determines a
generalized crossed product $\Gamma = \bigoplus_{x \in \G}\Gamma_x$
with ring isomorphism $\iota: R \rightarrow \Gamma_1$ and
multiplication given by $\gamma_x \gamma_y =
\F^{\Gamma}_{x,y}(\gamma_x \tensor{R} \gamma_y)$ for $\gamma_x \in
\Gamma_x, \gamma_y \in \Gamma_y$. In other words, generalized
crossed products and factor maps are just two ways to define the same
mathematical object. A generalized crossed product $\Gamma$ gives clearly a
group homomorphism
\[
\xymatrix{\underline{\Gamma} : \G \ar[r] & \Picar{R}}, \qquad (x
\mapsto [\Gamma_x]).
\] 
Whether a general group homomorphism $\G \rightarrow \Picar{R}$ gives
some generalized crossed product is not so clear. However, as we have seen in Proposition \ref{prop:3}, this  is possible,  if the underlying maps satisfy the commutativity of diagrams \eqref{Eq:unitario} and the associated $3$-cocycle is trivial or at least cohomologically trivial.
\end{remark}

\begin{definition}
A \emph{morphism} of generalized crossed products $(\Delta(\Theta), \nu),
(\Delta(\Gamma), \iota)$ is a graded ring homomorphism $f: \Delta(\Theta) \to
\Delta(\Gamma)$ such that $f \circ \nu = \iota$. Equivalently, it consists
of a set of homomorphisms of $R$--bimodules
\[
\{ f_x : \Theta_x  \rightarrow \Gamma_x : x \in \G \}
\] 
such that all the diagrams
$$
\xymatrix@C=60pt{ \Theta_x\tensor{R}\Theta_y
\ar@{->}_-{f_x\tensor{}f_y}[d] \ar@{->}^-{\F^{\Theta}_{x,y}}[rr] & &
\Theta_{xy} \ar@{->}^-{f_{xy}}[d] \\ \Gamma_x\tensor{R}\Gamma_y
\ar@{->}_-{\F^{\Gamma}_{x,y}}[rr] & & \Gamma_{xy}}
$$
commute, and $f_1 \circ \nu = \iota$.  
\end{definition}

Let $R \subseteq S$ be  an extension of rings with the same set of local units $ \Sf{E}$. We denote as in \cite{ElKaoutit/Gomez:2010} 
by $\Inv{R}{S}$ the group of all invertible unital $R$-sub-bimodules of $S$. An $R$-sub-bimodule $X \subseteq S$ 
belongs to the group $\Inv{R}{S}$ if and only if there exists an $R$-sub-bimodule $Y \subseteq S$ such that 
$$
XY\,\,=\,\, R \,\, = \,\, YX,
$$
where the first and last terms are obviously defined  by the multiplication of $S$. Clearly, there is a homomorphism of groups $\mu: \Inv{R}{S} \to \Picar{R}$.

\begin{remark}\label{rem:triangulo}
A generalized crossed product $\Gamma$ of $R$ with $\G$ gives in particular the
ring extension $\iota : (R,E) \rightarrow (\Gamma,E')$ with local
untis. In this way, $\Gamma_x$ is a unital $R$-subbimodule of
$\Gamma$ for every $x \in \G$, and the isomorphism $\iota : R
\rightarrow \Gamma_1$ becomes $R$--bilinear. We will identify $\Gamma_1$ with $R$ and $\Sf{E}$ with $\Sf{E}'$. Let us denote by
$\F^{\Gamma} : \Gamma \tensor{R} \Gamma \rightarrow \Gamma$ the
multiplication map of the ring $\Gamma$. It follows from \cite[Lemma
1.1]{ElKaoutit/Gomez:2010} that its restriction to $\Gamma_x
\tensor{R} \Gamma_y$ gives an $R$-bilinear isomorphism
\[
\xymatrix{\F^{\Gamma}_{xy} : \Gamma_x \tensor{R} \Gamma_y
\ar^{\simeq}[r] & \Gamma_x\Gamma_y = \Gamma_{xy}}
\]
for every $x, y \in \G$, since, obviously, $\Gamma_x \in
\Inv{R}{\Gamma}$ for every $x \in \G$. Clearly, our generalized crossed product
determines a factor map in the sense of  Definition \ref{factormap}. On the other hand, the associated homomorphisms of groups $\underline{\Gamma}: \G \to \Picar{R}$ actually factors throughout the morphism $\mu$. That is, we have a commutative diagram of groups
\begin{equation}\label{Eq:factoriza}
\xymatrix@R=30pt{ \G \ar@{->}^-{\underline{\Gamma}}[rr]  \ar@{-->}_-{\bara{\Gamma}}[rd] & & \Picar{R} \\ & \Inv{R}{\Gamma}. \ar@{->}_-{\mu}[ru] &  }
\end{equation}
\end{remark}

As we have seen in Remark \ref{rem:triangulo}, one can start with a fixed ring extension $R\subseteq S$ with the same set of local units $\Sf{E}$, and then  work with the $\bara{\Gamma}$'s defined this time by $\bara{\Gamma}: \G \to \Inv{R}{S}$ instead of the $\underline{\Gamma}$'s.
In this case the situation becomes much more manageable, in the sense that  each morphism $\bara{\Gamma}$ induces automatically a factor map and vice versa.

\begin{remark}\label{rem:com}
As was shown in \cite[Remarks 1., 2. and 3.]{Kanzaki:1968}, 
the above notion generalizes the classical notion of crossed
product in the commutative Galois setting.  Specifically, given a commutative Galois extension $\Bbbk \subseteq
R$ with
a finite Galois group $\G$, and consider the canonical homomorphism
of groups
$\Phi_0: \G \to \RPicar{\Bbbk}{R}$, 
where $\RPicar{\Bbbk}{R}\,=\,\{ [P] \in \Picar{R}|\,\, kp=pk,
\forall \, k \in \Bbbk, p \in P\}$, which
sends $x \to [R^x]$ to the class of the $R$-bimodules $R^x$ whose
underlying left $R$-module is ${}_RR$ and
its right $R$-module structure was twisted by the automorphism $x$, see Example \ref{Example:B-Rio}. 
In this case, the over ring is chosen to be  $S\,=\, \oplus_{x \in \G}\Phi_0(x)$ 
defined as the usual crossed product attached to $\Phi_0$, 
which is not necessary trivial, that is, defined using any
$2$-cocycle.
Thus, in the Galois theory of commutative rings, the homomorphism of groups which leads to the study of intermediate crossed products rings, is the obvious homomorphism of groups $\bara{\Theta}$ which satisfies $\Phi_0\,=\,\mu \circ \bara{\Theta}$, where $\mu$ is as in diagram \eqref{Eq:factoriza}.
\end{remark}

\section{An abelian group of generalized crossed products.}\label{sec:3}

From now on, we fix a ring extension $R \subseteq S$ with the same set of local units $\Sf{E}$. Let $\G$ be any group and assume given  a morphism of groups 
$$ \overline{\Theta}: \G \longrightarrow \Inv{R}{S},\quad \lr{x \longmapsto \Theta_x},$$
where we have used the notation of Remark \ref{rem:triangulo}.
The multiplication of $S$ induces then a factor map
$$\{ \F^{\Theta}_{x,y}: \Theta_x \tensor{R} \Theta_y \longrightarrow \Theta_{xy}, : x,\, y\, \in \G\}. $$ 
Clearly $R = \Theta_1$, whence $\mathsf{E}$ is a set of local units for $\Delta (\Theta)$. 
We introduce in this section an abelian group $\Scr{C}(\Theta/R)$
whose elements are the isomorphism classes of generalized crossed products
whose homogeneous components are similar as $R$-bimodules to that of
$\Delta(\Theta)$, see Section \ref{sec:2} for definition.
Our definition is inspired from that of Miyashita 
\cite[\S 2.]{Miyashita:1973}, however the proofs presented here are
slightly different. This group will be crucial
for the construction of the seven terms exact sequence of groups
in the next section.
\smallskip

 Let us consider thus
the set $\Scr{C}(\Theta/R)$ of isomorphism classes of generalized crossed products
$[\Delta(\Gamma)]$ such that, for each $x \in \G$, we have
$\Gamma_x \sim \Theta_x$, that is they are similar as $R$-bimodules, see
Section \ref{sec:1}.

\begin{proposition}\label{prop:4}
Consider the set of isomorphisms classes of
generalized crossed products $\Scr{C}(\Theta/R)$ defined
above.
Then $\Scr{C}(\Theta/R)$ has the structure of an abelian group.
Moreover the subset $\Scr{C}_0(\Theta/R)$ consisting
of classes $[\Delta(\Lambda)]$ such that for every $x \in \G$,
$\Lambda_x\cong \Theta_x$ as $R$-bimodules, is a sub-group of
$\Scr{C}(\Theta/R)$.
\end{proposition}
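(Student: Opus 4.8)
The plan is to equip $\Scr{C}(\Theta/R)$ with a multiplication coming from the ``tensor product over $R$'' of generalized crossed products, prove it descends to isomorphism classes, and then verify the group axioms. Concretely, given two classes $[\Delta(\Gamma)]$ and $[\Delta(\Lambda)]$ in $\Scr{C}(\Theta/R)$, I would set their product to be the class of the generalized crossed product whose $x$-th homogeneous component is $\Gamma_x \tensor{R} \Lambda_x$. First I would check that this is again a legitimate object of $\Scr{C}(\Theta/R)$: each $\Gamma_x \tensor{R} \Lambda_x$ is an invertible unital $R$-bimodule (the tensor product of invertibles is invertible), and since $\Gamma_x \sim \Theta_x$ and $\Lambda_x \sim \Theta_x$, compatibility of the similarity relation with $\tensor{R}$ gives $\Gamma_x \tensor{R}\Lambda_x \sim \Theta_x\tensor{R}\Theta_x$. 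This last bimodule is \emph{not} similar to $\Theta_x$ in general, so to land back inside $\Scr{C}(\Theta/R)$ I must twist: the correct product is $[\Delta(\Gamma \tensor{R}\Lambda)]$ with $x$-component $\Gamma_x\tensor{R}\Theta_x^{-1}\tensor{R}\Lambda_x$ (equivalently, one uses the fixed $\Delta(\Theta)$ as a ``unit'' to renormalize), so that each component is similar to $\Theta_x$ as required.

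The multiplication maps of the new crossed product are built from those of $\Gamma$, $\Lambda$, and $\Theta$ together with the twisting natural isomorphisms $\Sf{T}_{-,-}$ of Proposition \ref{prop:1}, which let me permute tensor factors so as to assemble a map
\[
(\Gamma_x \tensor{R}\Theta_x^{-1}\tensor{R}\Lambda_x) \tensor{R} (\Gamma_y \tensor{R}\Theta_y^{-1}\tensor{R}\Lambda_y) \longrightarrow \Gamma_{xy}\tensor{R}\Theta_{xy}^{-1}\tensor{R}\Lambda_{xy}.
\]
The weak associativity of $\Sf{T}_{-,-}$ recorded in Lemma \ref{lema:3} is exactly what is needed to show that the resulting family $\{\F_{x,y}\}$ satisfies the associativity constraint \eqref{asociativa}, and hence defines a bona fide factor map and thus a generalized crossed product. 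I would then verify that the product is well defined on isomorphism classes: a morphism of generalized crossed products induces, componentwise, $R$-bilinear isomorphisms, and tensoring these (again reshuffling with $\Sf{T}_{-,-}$) produces a morphism between the product crossed products.

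Next I would check the group axioms. The identity element is the class $[\Delta(\Theta)]$ itself, since its $x$-component is $\Theta_x$ and the twist $\Gamma_x\tensor{R}\Theta_x^{-1}\tensor{R}\Theta_x \cong \Gamma_x$ collapses back, with the unit isomorphisms furnished by $\fk{l}, \fk{r}$ of the invertible bimodule $\Theta_x$. The inverse of $[\Delta(\Gamma)]$ is the class whose $x$-component is $\Gamma_x^{-1}\tensor{R}\Theta_x\tensor{R}\Theta_x$, i.e. built from the inverse bimodules $\Gamma_x^{-1}$; the verification that $\Gamma_x\tensor{R}\Gamma_x^{-1}\cong R$ feeds into showing the product with the inverse is isomorphic to $\Delta(\Theta)$. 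Commutativity is precisely where $\Sf{T}_{-,-}$ enters again: the natural isomorphism $\Gamma_x\tensor{R}\Lambda_x \cong \Lambda_x\tensor{R}\Gamma_x$ assembles into an isomorphism of the two product crossed products, and one must confirm it is a \emph{graded} ring homomorphism fixing $R$.

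The main obstacle I anticipate is neither defining the operation nor exhibiting inverses, but rather the bookkeeping needed to confirm that all the twisting isomorphisms $\Sf{T}_{-,-}$ glue together coherently so that each candidate map is genuinely associative, unital, and graded-multiplicative. In the non-unital setting this is genuinely delicate: every application of $\Sf{T}_{M,N}$ carries a choice of two-sided unit $e \in Unit\{x,y\}$, and, as the Remark after the decomposition $e=\sum_{(e)}x_ey_e$ warns, such units need not behave as naively as in the unital case. Thus the core of the work is a careful diagram chase, invoking Lemma \ref{lema:3} to reduce the pentagon-type coherence for the products to the already-established weak associativity, and checking that the unit ambiguities wash out because all maps involved are $R$-bilinear. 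Once coherence is in hand, that $\Scr{C}_0(\Theta/R)$ is a subgroup is immediate: it is closed under the product and inverse since $\Lambda_x\cong\Theta_x$ is preserved under the componentwise tensor-and-twist construction (the twist $\Lambda_x\tensor{R}\Theta_x^{-1}\tensor{R}\Lambda'_x\cong\Theta_x$ when $\Lambda_x\cong\Lambda'_x\cong\Theta_x$), and it visibly contains the identity $[\Delta(\Theta)]$.
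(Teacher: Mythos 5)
Your overall route is the same as the paper's: the product of $[\Delta(\Gamma)]$ and $[\Delta(\Lambda)]$ is taken componentwise to be $\Gamma_x\tensor{R}\Theta_{x^{-1}}\tensor{R}\Lambda_x$ (your $\Theta_x^{-1}$ is the same thing, since $\Theta_x\Theta_{x^{-1}}=\Theta_1=R$ in $S$), the factor maps are assembled with the twist $\Sf{T}_{-,-}$ of Proposition \ref{prop:1}, associativity is reduced to Lemma \ref{lema:3}, the identity is $[\Delta(\Theta)]$, and commutativity again comes from $\Sf{T}_{-,-}$. All of that matches the paper.

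There is, however, a genuine problem with your inverse. You propose that $[\Delta(\Gamma)]^{-1}$ have $x$-component $\Gamma_x^{-1}\tensor{R}\Theta_x\tensor{R}\Theta_x$ and say that $\Gamma_x\tensor{R}\Gamma_x^{-1}\cong R$ "feeds into" the cancellation. But the product's $x$-component is $\Gamma_x\tensor{R}\Theta_x^{-1}\tensor{R}\Gamma_x^{-1}\tensor{R}\Theta_x\tensor{R}\Theta_x$, and $\Gamma_x$ is \emph{not} adjacent to $\Gamma_x^{-1}$: to bring them together you would have to commute $\Theta_x^{-1}$ past $\Gamma_x^{-1}$, and Proposition \ref{prop:1} only supplies a twist $\Sf{T}_{M,N}$ when both $M\,|\,R$ and $N\,|\,R$, which fails for $\Theta_x^{-1}$ and $\Gamma_x^{-1}$ (they are similar to $\Theta_x^{-1}$, not to $R$). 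In $\Picar{R}$ your candidate works precisely when $[\Gamma_x\tensor{R}\Theta_x^{-1}]$ commutes with $[\Theta_x]$, which is not guaranteed: writing $[\Gamma_x]=[P_x][\Theta_x]$ with $P_x\sim R$, your product class is $[P_x]\,[\Theta_x]^{-1}[P_x]^{-1}[\Theta_x]\,[\Theta_x]$, and the middle conjugation need not be trivial in the noncommutative setting. The paper's formula $\Theta_x\tensor{R}\Omega_{x^{-1}}\tensor{R}\Theta_x$ is ordered exactly so that only adjacent mutually inverse factors are cancelled ($\Theta_{x^{-1}}\tensor{R}\Theta_x\cong R$, then $\Omega_x\tensor{R}\Omega_{x^{-1}}\cong R$), with no illegal commutation. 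So replace your inverse by $\Theta_x\tensor{R}\Gamma_{x^{-1}}\tensor{R}\Theta_x$; with that correction the rest of your plan (including the description of $\Scr{C}_0(\Theta/R)$ as a subgroup) goes through as in the paper.
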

\begin{proof}
Given two classes $[\Delta(\Omega)], [\Delta(\Gamma)] \in
\Scr{C}(\Theta/R)$, their multiplication is defined by
$$ [\Delta(\Omega)] \, [\Delta(\Gamma)]\,\, =\,\, [\underset{x \in
\G}{\oplus}\, \Omega_x\tensor{R}\Theta_{x^{-1}}\tensor{R}\Gamma_x].
$$
The factor maps of its representative generalized crossed product
are given by the following composition
$$
\xymatrix{ \Omega_x\tensor{R}\Theta_{x^{-1}}\tensor{R}\Gamma_x
\tensor{R}
\Omega_y\tensor{R}\Theta_{y^{-1}}\tensor{R}\Gamma_y
\ar@{->}|-{\Omega_x\tensor{}
\Sf{T}_{\Theta_{x^{-1}}\tensor{}\Gamma_x,\,
\Omega_y\tensor{R}\Theta_{y^{-1}}}\tensor{}\Gamma_y}[rdd]
\ar@{-->}_-{\F^{\Omega\Gamma}_{x,y}}@/_8pc/[dddddr] & & \\ & & \\ &
\Omega_x\tensor{R}\Omega_{y}\tensor{R}\Theta_{y^{-1}} \tensor{R}
\Theta_{x^{-1}}\tensor{R}\Gamma_{x}\tensor{R}\Gamma_y 
\ar@{->}|-{\F^{\Omega}_{x,y}\tensor{}\F^{\Theta}_{y^{-1},x^{-1}}\tensor{}\F^{\Gamma}_{x,y}}[ddd]
&
\\ & & \\ & & \\ &
\Omega_{xy}\tensor{R}\Theta_{(xy)^{-1}}\tensor{R}\Gamma_{xy}, & }
$$
where $\Sf{T}_{\Theta_{x^{-1}}\tensor{}\Gamma_x,\,
\Omega_y\tensor{R}\Theta_{y^{-1}}}$ is the twist $R$-bilinear map
defined in Proposition \ref{prop:1}.
The associativity of the 
$\F^{\Omega\Gamma}_{-,-}$'s is easily deduced using Lemma
\ref{lema:3}.
This multiplication is commutative since for any two classes 
$[\Delta(\Omega)], [\Delta(\Gamma)] \in \Scr{C}(\Theta/R)$, we have
a family of $R$-bilinear
isomorphisms
$$ 
\xymatrix@R=40pt{ \Gamma_x \tensor{R} \Theta_{x^{-1}} \tensor{R}
\Omega_{x} \ar@{->}^-{\cong}[rr]
\ar@{-->}_-{\cong}@/_4pc/[ddrr] & & \Gamma_x \tensor{R}
\Theta_{x^{-1}}
\tensor{R} \Omega_{x}\tensor{R}\Theta_{x^{-1}} \tensor{R} \Theta_x
\ar@{->}|-{\Sf{T}\tensor{}\Theta_x}[d] \\ & &
\Omega_x \tensor{R} \Theta_{x^{-1}} 
\tensor{R} \Gamma_{x}\tensor{R}\Theta_{x^{-1}} \tensor{R} \Theta_x
\ar@{->}^-{\cong}[d] \\ & & \Omega_x \tensor{R} \Theta_{x^{-1}}
\tensor{R} \Gamma_{x},}
$$
which is easily shown to be compatible with both factors maps of
$\Gamma$ and $\Omega$.
Thus, it induces an isomorphism at the level of 
generalized crossed products.

It is clear that the class of $[\Delta(\Theta)]$ is the unit of 
this multiplication. The inverse of any class $[\Delta(\Omega)]$ is
given by
$$
[\Delta(\Omega)]^{-1}\,\,=\,\, [\underset{x \in\G}{\oplus}
\Theta_x\tensor{R}\Omega_{x^{-1}}\tensor{R} \Theta_x].
$$
Lastly, the fact that $\Scr{C}_0(\Theta/R)$ is a sub-group of
$\Scr{C}(\Theta/R)$, can be immediately deduced
from definitions.
\end{proof}

\begin{proposition}\label{prop:5}
Consider the abelian group
$\Scr{C}_0(\Theta/R)$ of Proposition \ref{prop:4}. Then there is an
isomorphism of groups
$$
\Scr{C}_0(\Theta/R) \,\, \cong\,\, H_{\Theta}^{2}(\G, \mathcal{U}(\Z)).
$$
\end{proposition}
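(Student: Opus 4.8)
The plan is to construct mutually inverse group homomorphisms between $\Scr{C}_0(\Theta/R)$ and $H_{\Theta}^{2}(\G,\mathcal{U}(\Z))$. Recall that a class in $\Scr{C}_0(\Theta/R)$ is represented by a generalized crossed product $\Delta(\Lambda)$ whose homogeneous components satisfy $\Lambda_x \cong \Theta_x$ as $R$-bimodules for every $x\in\G$. First I would fix, for each $x\in\G$, an $R$-bilinear isomorphism $\Sf{a}_x : \Lambda_x \to \Theta_x$; this is exactly the setup of Proposition \ref{prop:2} with the roles of $\Theta$ and $\Gamma$ played by $\Theta$ and $\Lambda$. The proposition then produces a normalized $2$-cocycle $\widetilde{\tau_{x,y}} \in Z_{\Theta}^2(\G,\mathcal{U}(\Z))$ measuring the failure of the $\Sf{a}_x$ to intertwine the two factor maps $\F^{\Theta}_{x,y}$ and $\F^{\Lambda}_{x,y}$. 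The assignment $[\Delta(\Lambda)] \mapsto [\widetilde{\tau_{-,-}}]$ is the candidate map $\Scr{C}_0(\Theta/R) \to H_{\Theta}^{2}(\G,\mathcal{U}(\Z))$.

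Next I would check that this assignment is well defined, i.e.\ independent of the chosen isomorphisms $\Sf{a}_x$ and of the representative $\Delta(\Lambda)$ in its isomorphism class. Changing each $\Sf{a}_x$ by an $R$-bilinear automorphism of $\Theta_x$, whose image under Lemma \ref{lema:4} is a unit $u_x \in \mathcal{U}(\Z)$, alters $\widetilde{\tau_{x,y}}$ precisely by the coboundary $u_x\,{}^xu_y\,u_{xy}^{-1}$ (using the $\G$-action \eqref{Eq:action} and equation \eqref{Eq:t} to commute the units past elements of $\Theta_x$); similarly a morphism of generalized crossed products $\Delta(\Lambda)\to\Delta(\Lambda')$ over $R$ composes with the $\Sf{a}_x$'s and again only shifts the cocycle by a coboundary. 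Conversely, I would build the inverse map: given a normalized $2$-cocycle $\sigma_{-,-}$, twist the factor maps of $\Delta(\Theta)$ to $\bara{\F}^{\Theta}_{x,y}(\fk{u}_x\tensor{R}\fk{u}_y) = \sigma_{x,y}(e)\,\F^{\Theta}_{x,y}(\fk{u}_x\tensor{R}\fk{u}_y)$ exactly as in the last statement of Proposition \ref{prop:3}; the $2$-cocycle condition on $\sigma$ guarantees (via the computation in Proposition \ref{prop:3}) that the associated $3$-cocycle $\widetilde{\alpha_{x,y,z}}$ is trivial, so the twisted family is genuinely associative and the diagrams \eqref{Eq:unitario} can be arranged to hold after normalizing $\sigma$, yielding a bona fide generalized crossed product $\Delta(\Theta^{\sigma})$ with $\Theta^{\sigma}_x = \Theta_x$. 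This defines $[\sigma_{-,-}] \mapsto [\Delta(\Theta^{\sigma})]$.

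Then I would verify that the two constructions are mutually inverse and that both respect the group structures. Starting from $\sigma$, forming $\Delta(\Theta^{\sigma})$ with the identity isomorphisms $\Sf{a}_x = \mathrm{id}_{\Theta_x}$, Proposition \ref{prop:2} returns exactly $\widetilde{\tau_{x,y}} = \sigma_{x,y}$, so one composite is the identity on cohomology; the other composite is the identity up to the well-definedness already established. For multiplicativity, I would take two classes $[\Delta(\Omega)],[\Delta(\Gamma)] \in \Scr{C}_0(\Theta/R)$ with cocycles $\tau^{\Omega},\tau^{\Gamma}$ and compute the cocycle of the product class, whose components are $\Omega_x\tensor{R}\Theta_{x^{-1}}\tensor{R}\Gamma_x$ from Proposition \ref{prop:4}. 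Here the factor maps are assembled from $\F^{\Omega}$, $\F^{\Theta}$, $\F^{\Gamma}$ and the twist transformation $\Sf{T}$; tensoring the chosen isomorphisms $\Sf{a}^{\Omega}_x$ and $\Sf{a}^{\Gamma}_x$ with identities on $\Theta_{x^{-1}}$ gives isomorphisms to $\Theta_x\tensor{R}\Theta_{x^{-1}}\tensor{R}\Theta_x\cong\Theta_x$, and a direct (if tedious) computation with Lemma \ref{lema:4} shows the resulting cocycle is $\tau^{\Omega}_{x,y}\,\tau^{\Gamma}_{x,y}$ up to a coboundary.

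I expect the main obstacle to be this last multiplicativity computation, because the product in $\Scr{C}_0(\Theta/R)$ routes through the twist natural transformation $\Sf{T}_{-,-}$ of Proposition \ref{prop:1}, and one must track how $\Sf{T}$ interacts with the scalars $u(e)\in\mathcal{U}(\Z)$ that represent the relevant $R$-bilinear automorphisms under Lemma \ref{lema:4}. The key technical device throughout will be equation \eqref{Eq:t}, which lets one slide a unit $u(e)$ from the right of an element $\fk{t}\in\Theta_x$ to ${}^xu(e)$ on the left, and its extension \eqref{Eq:t1} to similar bimodules; careful, repeated application of these identities (exactly in the style of the proofs of Propositions \ref{prop:3} and \ref{prop:2}) should reduce the verification to the $\G$-action \eqref{Eq:action} and confirm that the product of classes corresponds to the product of cohomology classes, completing the isomorphism.
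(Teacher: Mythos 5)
Your proposal is correct and follows essentially the same route as the paper: the same cocycle assignment via Proposition \ref{prop:2}, the same coboundary argument for well-definedness, the same twisting of $\F^{\Theta}_{x,y}$ by $\sigma_{x,y}(e)$ to realize surjectivity, and the same reliance on the twist map of Proposition \ref{prop:1} together with equation \eqref{Eq:t} for multiplicativity. The only cosmetic difference is that you package injectivity and surjectivity as an explicit two-sided inverse, whereas the paper verifies them separately for the single map $\zeta$.
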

\begin{proof}
The stated isomorphism is given by the following map:
$$ \zeta: \Scr{C}_0(\Theta/R) \longrightarrow H_{\Theta}^{2}(\G,
\mathcal{U}(\Z)), \quad \lr{ [\Delta(\Lambda)] \longmapsto
[\widetilde{\tau}_{-,-}] },$$
where $\widetilde{\tau}_{-,-}$ is the normalized $2$-cocycle defined
in Proposition \ref{prop:2} and $[\widetilde{\tau}_{-,-}]$ denotes
its
equivalence class in $H_{\Theta}^2(\G,\mathcal{U}(\Z))$.
First we need to show that this map is in fact well defined. To do
so, we take two isomorphic generalized crossed products $\chi:
\Delta(\Lambda)
\overset{\cong}{\to} \Delta(\Sigma)$ which represent the same
element in the subgroup
$\Scr{C}_0(\Theta/R)$, with $R$-bilinear isomorphisms 
$\Sf{a}_x:\Lambda_x \rightarrow \Theta_x \leftarrow \Sigma_x:
\Sf{b}_x$,
for all $x\in \G$. Then we check that the associated $2$-cocycles
are cohomologous. So let us denote these cocycles by
$\widetilde{\tau}_{-,-}$ and $\widetilde{\gamma}_{-,-}$ associated,
respectively, to $\Delta(\Lambda)$ and
$\Delta(\Sigma)$. We need to show that
$[\widetilde{\tau}_{-,-}]\,=\,[\widetilde{\gamma}_{-,-}]$.
Recall from Proposition \ref{prop:2} that we have 
$$
\tau_{x,y} \circ \F^{\Theta}_{x,y} \circ
\lr{\Sf{a}_x\tensor{R}\Sf{a}_y}\,\,=\,\, \Sf{a}_{xy} \circ
\F^{\Lambda}_{x,y}, \quad
\gamma_{x,y} \circ \F^{\Theta}_{x,y} \circ
\lr{\Sf{b}_x\tensor{R}\Sf{b}_y} \,\,=\,\, \Sf{b}_{xy} \circ
\F^{\Sigma}_{x,y},
$$
for every $x, y \in \G$, where $\F^{\Lambda}_{-,-}$ and
$\F^{\Sigma}_{-,-}$ are, respectively,
the factor maps relative to $\Lambda$ and $\Sigma$. For each $x \in
\G$, we set the following $R$-bilinear
isomorphism
$$
\xymatrix{\beta_x: \Theta_x \ar@{->}^-{\Sf{a}_x^{-1}}[r] & \Lambda_x
\ar@{->}^-{\chi_x}[r] & \Sigma_x \ar@{->}^-{\Sf{b}_x}[r] &
\Theta_x},
$$
where $\chi_x$ is the $x$-homogeneous component of the isomorphism
$\chi$.
Then it is easily seen, using the fact that $\chi$ is morphism of
generalized crossed products,
that, for each pair of elements $x,y\in\G$,  we have
\begin{equation}\label{Eq:beta}
\beta_{xy} \circ \tau_{x,y} \circ \F^{\Theta}_{x,y} \,\, =\,\, 
\gamma_{x,y} \circ \F^{\Theta}_{x,y} \circ
\lr{\beta_x\tensor{R}\beta_y}.
\end{equation}
We claim that 
\begin{equation}\label{Eq:beta-gamma}
\widetilde{\tau}_{x,y}(e)
\widetilde{\beta}_{xy}(e)\,\,=\,\,\widetilde{\gamma}_{x,y}(e)
\widetilde{\beta}_{x}(e) {}^x\widetilde{\beta}_y(e), \quad
\text{for all } e \in \Sf{E}.
 \end{equation}
So let $e \in \Sf{E}$, and consider its decomposition
$e\,=\,\sum_{(e)} x_e \bara{x}_e$
with respect to $\Theta_x$, for a given $x \in \G$. Let $e_1 \in
Unit\{x_e ,\bara{x}_e \}$.
Using equation \eqref{Eq:beta}, we get the following equality in $S$
\begin{equation}\label{Eq:iguales}
\sum_{(e),\, (e_1)} \beta_{xy} \circ \tau_{x,y} (x_ey_{e_1})
\bara{y}_{e_1}\bara{x}_e
\,\, =\,\, \sum_{(e),\, (e_1)} \gamma_{x,y}
\lr{\beta_x(x_e)\beta_y(y_{e_1})} \bara{y}_{e_1}\bara{x}_e.
\end{equation}
Routine computations show that,
\begin{eqnarray*}
\sum_{(e),\, (e_1)} \beta_{xy}\tau_{x,y}(x_ey_{e_1})
\bara{y}_{e_1}\bara{x}_e &=&
\widetilde{\tau}_{x,y}(e_1) \widetilde{\beta}_{xy}(e_1) e
\,\,\,=\,\,\, \widetilde{\tau}_{x,y}(e)
\widetilde{\beta}_{xy}(e),\,\,\text{ and } \\
\sum_{(e),\, (e_1)} \gamma_{xy}\lr{\beta_x(x_e)\beta_y(y_{e_1})}
\bara{y}_{e_1}\bara{x}_e &\overset{\eqref{Eq:t}}{=}&
\widetilde{\gamma}_{x,y}(e_1) \widetilde{\beta}_{x}(e_1)
{}^x\widetilde{\beta}_y(e_1) e\,\,\,=\,\,\,
\widetilde{\gamma}_{x,y}(e) \widetilde{\beta}_{x}(e)
{}^x\widetilde{\beta}_y(e),
\end{eqnarray*}
which by \eqref{Eq:iguales} imply that 
$$
\widetilde{\tau}_{x,y}(e)
\widetilde{\beta}_{xy}(e)\,\,=\,\,\widetilde{\gamma}_{x,y}(e)
\widetilde{\beta}_{x}(e) {}^x\widetilde{\beta}_y(e),
$$
which is the claimed equality. On the other hand we define 
$$
\xymatrix@R=0pt{h: \G \ar@{->}[rr] & &
\mathcal{U}(\Z)\,=\,\Aut{R-R}{R} \\ x \ar@{|->}[rr] & & \lr{h_x: r
\longmapsto \sum_{(e)}r\beta_x(x_{e}) \bara{x}_e} }
$$
where $e \in Unit\{r\}$, and $e\,=\, \sum_{(e)}x_e\bara{x}_e$ is the
decomposition of $e$ in $R\,=\, \Theta_x\Theta_{x^{-1}}$. It is
clear form
definitions that $\widetilde{\beta}_x\,=\, h_x$, for ever $x \in
\G$. Taking an arbitrary element $r \in R$, we can show using
equation
\eqref{Eq:beta-gamma} that 
$$ \widetilde{\tau}_{x,y} \circ h_{xy}(r)\,\,=\,\,
\widetilde{\gamma}_{x,y} \circ {}^xh_y \circ h_x(r).$$
Therefore, $\widetilde{\tau}_{x,y} \circ h_{xy}\,\,=\,\,
\widetilde{\gamma}_{x,y} \circ {}^xh_y \circ h_x,$
which means that $\widetilde{\tau}_{-,-}$ and
$\widetilde{\gamma}_{-,-}$ are cohomologous.

By the same way, we show that, if $\widetilde{\tau}_{-,-}$ and
$\widetilde{\gamma}_{-,-}$ are cohomologous, then
$\Delta(\Lambda/R)$ and $\Delta(\Sigma/R)$ are isomorphic as
generalized crossed products,
which  means that the stated map $\zeta$ is injective. 
To show that this map is surjective, we start with a given
normalized
$2$-cocycle $\sigma_{-,-} :\G^2 \to \mathcal{U}(\Z)$, 
and consider the following $R$-bilinear automorphisms: for every $x
\in \G$
$$ \vartheta_x: \Theta_x \longrightarrow \Theta_x,\quad \lr{\fk{u}
\longmapsto \sigma_{x,x}(e)\fk{u}},$$
where $e \in Unit\{\fk{u}\}$. Now set $\Sigma_x\,=\, \Theta_x$ as an
$R$-bimodule with the $R$-bilinear isomorphism
$\vartheta_x: \Sigma_x \to \Theta_x$ previously defined.
We define new factor maps relative to those $\Sigma_x$'s, by 
$$ \F^{\Sigma}_{x,y}: \Sigma_x\tensor{R}\Sigma_y \to
\Sigma_{xy},\quad \lr{\fk{u}_x\tensor{R}\fk{u}_y \longmapsto
\sigma_{x,y}(e)\F^{\Theta}_{x,y}(\fk{u}_x\tensor{R}\fk{u}_y)},$$
where $e \in Unit\{\fk{u}_x,\fk{u}_y\}$. 
The $2$-cocycle condition on $\sigma_{-,-}$ gives the associativity
of $\F^{\Sigma}_{-,-}$, while
the normalized condition gives the unitary property. Thus,
$\Delta(\Sigma/R)$
is a generalized crossed product whose isomorphic 
class belongs by definition to the sub-group $\Scr{C}_0(\Theta/R)$.

Lastly, by a routine computation, using the definition of the twist
natural map given in Proposition \ref{prop:1},
we show that the map $\zeta$ is a homomorphism of groups.
\end{proof}

\section{The Chase-Harrison-Rosenberg seven terms exact
sequence.}\label{sec:4}
In this section we show the analogue of Chase-Harrison-Rosenberg's
\cite{Chase et al:1965} seven terms exact
sequence, generalizing by this the case of commutative Galois
extensions with finite Galois group due of
T. Kanzaki \cite[Theorem p.187]{Kanzaki:1968} and 
the noncommutative unital case treated by Y. Miyashita in
\cite[Theorem 2.12]{Miyashita:1973}.
\smallskip

From now on we fix a ring extension $R \subseteq S$ with a same set
of local units $\Sf{E}$,
together with a morphism of groups $\overline{\Theta}: \G \to \Inv{R}{S}$ and
the associated generalized crossed product
$\Delta(\Theta/R)$  having $\F^{\Theta}_{-,-}$ as a
factor
map relative to $\Theta$. 

We denote by $\Picar{R}$ and $\Picar{S}$, respectively, the Picard
group of $R$ and $S$.
There is a canonical left $\G$-action on $\Picar{R}$ induced by the
obvious homomorphism of groups $\Inv{R}{S} \to \Picar{R}$ given explicitly by
$$
{}^x[P]\,\,=\,\,
[\Theta_x\tensor{R}P\tensor{R}\Theta_{x^{-1}}],\quad \text{ for
every } x \in \G, \text{ and } [P] \in\Picar{R}.
$$
The corresponding $\G$-invariant sub-group is denoted by
$\Picar{R}^{\G}$.
We will consider 
the sub-group $\RPicar{\Z}{R}$ whose elements are represented by
$\Z$-invariant $R$-bimodules, in the sense that
$$
[P] \in \RPicar{\Z}{R},\,\,\text{if and only if }\,\, [P] \in
\Picar{R}, \text{ and }\,
\fk{z}(e) p e'\,\,=\,\, e p \fk{z}(e'),\,\, \forall p \in P,
$$
for every pair of units $e,e' \in \Sf{E}$, and every element $\fk{z}
\in \Z$. In this way, we set
$$
\RPicar{\Z}{R}^{\G}\,\,:=\,\, \RPicar{\Z}{R} \,\cap\,
\Picar{R}^{\G}.
$$

Recall from \cite[Sect. 3]{ElKaoutit/Gomez:2010} the group $\Scr{P}(S/R)$:
$$
\Scr{P}(S/R)\,=\, \left\{ \underset{}{} \objeto{[P]}{\phi}{[X]}|\,
[P] \in \Picar{R}, [X] \in \Picar{S}, \text{ and a map }
\phi: {}_RP_R \to {}_RX_R  \right\}
$$ 
where at least one of the maps $\bara{\phi}_{l} : P\tensor{R}S \to
X$, or $\bara{\phi}_{r} : S\tensor{R}P \to X$,
canonically attached to $\phi$, is an isomorphisms of bimodules. For
more details on the structure group of this set we refer to
\cite[Section 3.]{ElKaoutit/Gomez:2010}. Obviously there is a
homomorphism of groups
\begin{equation}\label{Eq:Ol}
\Scr{O}_l: \Scr{P}(S/R) \longrightarrow \Picar{R},
\lr{(\objeto{[P]}{\phi}{[X]}) \longmapsto [P]}.
\end{equation}
This group also inherits the above $\G$-action. 
That is, there is a canonical left $\G$-action on this group which
is induced by $\Theta$ and given as follows:
$$
{}^x\lr{\objeto{[P]}{\phi}{[X]}}\,\,=\,\,
\lr{\objeto{[\Theta_x\tensor{R}P\tensor{R}\Theta_{x^{-1}}]}{\psi}{[X]}},
$$
where $\psi\,:=\, \Theta_x\tensor{R}\phi\tensor{R}\Theta_{x^{-1}}$
(composed with the multiplication of $S$).
The subgroup $\Scr{P}(S/R)^{\G}$ of $\G$-invariant elements of
$\Scr{P}(S/R)$
has a sightly simpler description:

\begin{lemma}\label {lema:6}
Keeping the above notations, we have 
$$
\Scr{P}(S/R)^{\G}\,\,=\,\,\left\{ \underset{}{}
\objeto{[P]}{\phi}{[X]} \in \Scr{P}(S/R)|\,\, \phi(P)\Theta_x \,=\,
\Theta_x\phi(P), \text{ for every }
x \in \G \right\},
$$ 
where $\phi(P)\Theta_x$ and $\Theta_x\phi(P)$ stand for the obvious
subsets of the $S$-bimodule $X$, e.g.
$$
\phi(P)\,\Theta_x\,\,=\,\, \left\{\underset{finite}{\sum} \phi(p_i)
\fk{u}_x^i|\,\, p_i \in P,\, \fk{u}_x^i \in \Theta_x \right\}.
$$
\end{lemma}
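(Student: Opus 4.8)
The statement characterizes the $\G$-invariant elements of $\Scr{P}(S/R)$ as those objects $\objeto{[P]}{\phi}{[X]}$ for which $\phi(P)\Theta_x = \Theta_x\phi(P)$ as subsets of $X$ for every $x \in \G$. An element of $\Scr{P}(S/R)$ is $\G$-invariant precisely when, for each $x \in \G$, there is an isomorphism in $\Scr{P}(S/R)$ from $\objeto{[\Theta_x\tensor{R}P\tensor{R}\Theta_{x^{-1}}]}{\psi}{[X]}$ to $\objeto{[P]}{\phi}{[X]}$, where $\psi = \Theta_x\tensor{R}\phi\tensor{R}\Theta_{x^{-1}}$ followed by the multiplication of $S$. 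The key observation is that $X$ is fixed in both data, so the morphism in $\Scr{P}(S/R)$ amounts to an isomorphism of $R$-bimodules $g_x : \Theta_x\tensor{R}P\tensor{R}\Theta_{x^{-1}} \to P$ compatible with the maps into $X$; this compatibility is exactly the assertion that the images inside $X$ coincide. First I would unwind the definition of a morphism in $\Scr{P}(S/R)$ from \cite[Section 3.]{ElKaoutit/Gomez:2010} to make precise that $\G$-invariance of $\objeto{[P]}{\phi}{[X]}$ is equivalent to the equality of subsets $\psi(\Theta_x\tensor{R}P\tensor{R}\Theta_{x^{-1}}) = \phi(P)$ inside $X$, for every $x$.

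For the inclusion $\subseteq$, suppose $\objeto{[P]}{\phi}{[X]}$ is $\G$-invariant. The image $\psi(\Theta_x\tensor{R}P\tensor{R}\Theta_{x^{-1}})$ is, after applying the multiplication of $S$ in $X$, exactly the subset $\Theta_x\,\phi(P)\,\Theta_{x^{-1}}$. The invariance condition identifies this with $\phi(P)$, so I obtain $\Theta_x\,\phi(P)\,\Theta_{x^{-1}} = \phi(P)$. Multiplying on the right by $\Theta_x$ and using that $\Theta_{x^{-1}}\Theta_x = R$ together with the fact that $\phi(P)$ and $\Theta_x$ are unital $R$-sub-bimodules of $S$ (so that $\phi(P)\,R = \phi(P)$ and $R\,\Theta_x = \Theta_x$), I recover $\Theta_x\,\phi(P) = \phi(P)\,\Theta_x$, which is the desired equality.

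The inclusion $\supseteq$ reverses this computation. Assuming $\phi(P)\Theta_x = \Theta_x\phi(P)$ for all $x$, I would multiply on the right by $\Theta_{x^{-1}}$ and again invoke $\Theta_x\Theta_{x^{-1}} = R$ and unitality to obtain $\Theta_x\,\phi(P)\,\Theta_{x^{-1}} = \phi(P)$, which is precisely the subset-equality inside $X$ witnessing $\G$-invariance. The underlying bimodule isomorphism $\Theta_x\tensor{R}P\tensor{R}\Theta_{x^{-1}} \cong P$ is then supplied automatically: both sides map isomorphically (via $\bara{\phi}_l$ or $\bara{\phi}_r$, one of which is an isomorphism by the definition of $\Scr{P}(S/R)$) onto the same subset of $X$, and the multiplication maps $\Theta_x\tensor{R}\Theta_{x^{-1}} \to R$ are isomorphisms since $\Theta_x \in \Inv{R}{S}$.

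\textbf{The main obstacle} I anticipate is the careful bookkeeping of local units when passing between the tensor-product description of $\psi$ and the subset description in terms of products in $S$. Specifically, identifying $\psi(\Theta_x\tensor{R}P\tensor{R}\Theta_{x^{-1}})$ with the literal subset $\Theta_x\,\phi(P)\,\Theta_{x^{-1}}$ requires knowing that the canonical map $\Theta_x\tensor{R}P\tensor{R}\Theta_{x^{-1}} \to S$ induced by multiplication surjects onto this subset, and that the cancellation $\Theta_{x^{-1}}\Theta_x = R$ interacts correctly with the non-commuting idempotents of $\Sf{E}$ (the subtlety flagged in the Remark following the decomposition $e = \sum_{(e)} x_e y_e$). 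Here I would lean on the invertibility of $\Theta_x$ in $\Inv{R}{S}$ and on the unitality of all bimodules involved, using that for a unital $R$-sub-bimodule $M \subseteq S$ one has $MR = M = RM$, to guarantee that no information is lost when inserting or deleting the factor $R = \Theta_x\Theta_{x^{-1}}$. The equalities $\Theta_x\Theta_{x^{-1}} = R = \Theta_{x^{-1}}\Theta_x$ are exactly the defining property of $\Inv{R}{S}$, so the algebra closes, but the verification that these are genuine equalities of subsets of $S$ (not merely isomorphisms) is what makes the argument work in the local-units setting.
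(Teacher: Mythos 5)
Your proposal is correct and follows essentially the same route as the paper's proof: both identify the image of $\psi$ with the subset $\Theta_x\,\phi(P)\,\Theta_{x^{-1}}$ of $X$, pass between $\Theta_x\,\phi(P)\,\Theta_{x^{-1}}=\phi(P)$ and $\phi(P)\,\Theta_x=\Theta_x\,\phi(P)$ using $\Theta_x\Theta_{x^{-1}}=R=\Theta_{x^{-1}}\Theta_x$ together with unitality, and lift the resulting equality of subsets to the required $R$-bilinear isomorphism $P\cong\Theta_x\tensor{R}P\tensor{R}\Theta_{x^{-1}}$ compatible with the maps into $X$. The only difference is one of bookkeeping: the paper invokes the injectivity of $\phi$ directly from \cite[Lemma 3.1]{ElKaoutit/Gomez:2010}, which is the precise statement underlying your appeal to $\bara{\phi}_{l}$ or $\bara{\phi}_{r}$ being an isomorphism.
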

\begin{proof}
Is based up on two main facts. The first one is that the
$R$-bilinear map
$\phi: P \to X$ defining the given element $\objeto{[P]}{\phi}{[X]}
\in\Scr{P}(S/R)$,
is always injective, see \cite[Lemma 3.1]{ElKaoutit/Gomez:2010}. The
second fact consists on the following equivalence:
For a fixed $x \in\G$, saying that $\phi(P)\Theta_x\,=\,
\Theta_x\phi(P)$ in ${}_SX_{S}$ is equivalent to say that there is
an isomorphism
$f_x: P\cong \Theta_x\tensor{R}P\tensor{R}\Theta_{x^{-1}}$ which
completes the commutativity of the following diagram
$$
\xymatrix@C=60pt@R=25pt{ P \ar@{->}^-{\phi}[rrr] \ar@{-->}_-{f_x}[d]
& & & X \ar@{=}^-{}[d]
\\ \Theta_x\tensor{R}P\tensor{R}\Theta_{x^{-1}}
\ar@/_2pc/@{-->}_-{\psi}[rrr]
\ar@{->}^-{\Theta_x\tensor{}\phi\tensor{}\Theta_{x^{-1}}}[r]
& \Theta_x\tensor{R}X\tensor{R}\Theta_{x^{-1}} \ar@^{(->}[r] &
S\tensor{R}X\tensor{R} S \ar@{->}^-{}[r] & X. }
$$
The later means, by the definition of $\Scr{P}(S/R)$, that 
$$
\lr{\objeto{[P]}{\phi}{[X]} }\,\, =\,\,
\lr{\objeto{[\Theta_x\tensor{}P\tensor{}\Theta_{x^{-1}}]}{\psi}{[X]}}.
$$
\end{proof}

\subsection{The first exact sequence.}\label{subsec:1}

Now we set 
$$
\Scr{P}_{\Z}(S/R)\,\,=\,\,\left\{ \objeto{[P]}{\phi}{[X]}
\in\Scr{P}(S/R)|\,\, [P] \in\RPicar{\Z}{R} \underset{}{} \right\},$$
and
$$
\Scr{P}_{\Z}(S/R)^{\G}\,\, =\,\, \Scr{P}_{\Z}(S/R) \, \cap\,
\Scr{P}(S/R)^{\G}.
$$

On the other hand, we consider the group  
$\Aut{R-ring}{S}$ of all $R$-ring automorphisms of $S$ (with same set of local units). It has the following subgroup 
$$
\Aut{R-ring}{S}^{(\G)}\,\,=\,\,\left\{ f \in \Aut{R-ring}{S} |\,\,
f(\Theta_x)\,=\, \Theta_x, \text{ for all } x \in \G \underset{}{}
\right\}.
$$

There are two interesting maps which will be used in the sequel: The
first one is given by
$$
\Scr{E}: \Aut{R-ring}{S} \longrightarrow \Scr{P}(S/R), \lr{ f
\longmapsto (\objeto{[R]}{\iota_f}{[S_f]})},
$$
where $\iota_f$ is the inclusion $R \subset S_f$, and  $S_f$ is the $S$-bimodule induced from the automorphism $f$,
that is, the left $S$-action is that of ${}_SS$, while the right one
has been altered by $f$, that is, we  have a new left $S$-action 
$$s.s'\,=\, s \, f(s'), \text{ for all } s, s' \in S.$$ 
The second connects the group $\mathcal{U}(\Z)$ with
$\Aut{R-ring}{S}$,
and it is defined by 
\begin{equation}\label{Eq:F}
\Scr{F}: \mathcal{U}(\Z)\,=\,\Aut{R-R}{R} \longrightarrow
\Aut{R-ring}{S}, \lr{ \sigma \longmapsto \left[\underset{}{}
\Scr{F}(\sigma): s \mapsto \sigma^{-1}(e) \, s\, \sigma(e)\right]},\end{equation}
where $e \in Unit\{s\}$, $s \in S$.

\begin{proposition}\label{prop:6}
Keeping the previous  notations, there is
a commutative diagram whose rows are exact sequences of groups
$$
\xymatrix{ \mathcal{U}(\Z)\,=\,\Aut{R-R}{R} \ar@{=}[d]
\ar@{->}^-{\Scr{F}}[r] & \Aut{R-ring}{S} \ar@{->}^-{\Scr{E}}[r]
& \Scr{P}(S/R) \ar@{->}^-{\Scr{O}_l}[r] & \Picar{R}  \\
\mathcal{U}(\Z)\,=\, \Aut{R-R}{R} \ar@{->}[r] & \Aut{R-ring}{S}^{(\G)}
\ar@{->}[r] \ar@^{(->}[u] & \Scr{P}_{\Z}(S/R)^{\G} \ar@{->}[r]
\ar@^{(->}[u] &
\RPicar{\Z}{R}^{\G}. \ar@^{(->}[u] }
$$  
\end{proposition}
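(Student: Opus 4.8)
The plan is to regard $\Scr{O}_l$ as the homomorphism already fixed in \eqref{Eq:Ol}, to check that $\Scr{F}$ and $\Scr{E}$ are well defined homomorphisms of groups, to prove that the top row is exact at its two interior terms $\Aut{R-ring}{S}$ and $\Scr{P}(S/R)$, and then to obtain the bottom row as a corestriction of the top one. Everything rests on a single structural observation: for $\sigma\in\mathcal{U}(\Z)$ and $r\in R$ with $e\in Unit\{r\}$ one has $\sigma(r)=\sigma(e)\,r=r\,\sigma(e)$, while $\sigma(e)$ and $\sigma^{-1}(e)$ are mutually inverse inside the unital ring $eRe$. Thus $\Scr{F}(\sigma)$ is just conjugation by the family $\sigma(e)$ on each corner $eSe$, and from this its independence of the chosen $e$, the identity $\Scr{F}(\sigma\tau)=\Scr{F}(\sigma)\circ\Scr{F}(\tau)$ (using commutativity of $\Z$), and the fact that $\Scr{F}(\sigma)$ is an $R$-ring automorphism of $S$ fixing $\Sf{E}$ are all routine. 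That $\Scr{E}(f)=(\,[R],\iota_f,[S_f]\,)$ lies in $\Scr{P}(S/R)$ is clear since the attached map $S\tensor{R}R\to S_f$ is an isomorphism; multiplicativity of $\Scr{E}$ follows from the $S$-bimodule isomorphism $S_f\tensor{S}S_g\cong S_{fg}$ and the product of $\Scr{P}(S/R)$ described in \cite{ElKaoutit/Gomez:2010}.

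Exactness at $\Aut{R-ring}{S}$ is the heart of the matter. For $\mathrm{im}\,\Scr{F}\subseteq\ker\Scr{E}$ I would note that, $\Scr{F}(\sigma)$ being inner by $\sigma(e)$, the assignment $s\mapsto s\,\sigma^{-1}(e)$ is an $S$-bimodule isomorphism $S_{\Scr{F}(\sigma)}\cong S$ that, together with the automorphism $\sigma^{-1}\in\mathcal{U}(\Z)$ acting on the $R$-component, exhibits $\Scr{E}(\Scr{F}(\sigma))$ as the neutral element of $\Scr{P}(S/R)$. Conversely, if $f\in\ker\Scr{E}$ then there is an $S$-bimodule isomorphism $g\colon S_f\to S$ and an $a\in\mathcal{U}(\Z)$ with $g\circ\iota_f=\iota\circ a$; left $S$-linearity and right $S_f$-linearity of $g$, combined with $f|_R=\mathrm{id}$, force $g$ to be determined by a unit family lying in $R$, i.e. by $\sigma(e)$ for a unique $\sigma\in\mathcal{U}(\Z)$, and the right-linearity relation then reads $f(t)\,\sigma(e)=\sigma(e)\,t$ on corners, which is precisely $f=\Scr{F}(\sigma)$. \textbf{This reconstruction is the main obstacle}: the Remark after Lemma \ref{lema:4} warns that a two sided unit of a decomposition $\{x_e,y_e\}$ need not coincide with $e$, so one cannot argue with a single global invertible element and must instead track the corners $eRe$ coherently across all of $\Sf{E}$.

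Exactness at $\Scr{P}(S/R)$ is comparatively soft. The inclusion $\mathrm{im}\,\Scr{E}\subseteq\ker\Scr{O}_l$ is immediate from $\Scr{O}_l(\Scr{E}(f))=[R]$. For the reverse inclusion, take $(\,[P],\phi,[X]\,)\in\ker\Scr{O}_l$, so that $P\cong R$ as $R$-bimodules; transporting along this isomorphism I may assume $P=R$, and then by definition of $\Scr{P}(S/R)$ one of the maps attached to $\phi$, say $S\tensor{R}R\to X$, is an $S$-bimodule isomorphism. Hence $X$ is trivial as a one sided $S$-module, so $X\cong S_f$ where $f$ is the $R$-ring automorphism of $S$ read off from the residual right $S$-action, and one checks $(\,[P],\phi,[X]\,)=\Scr{E}(f)$.

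Finally, for the bottom row I would verify that all three maps corestrict to the displayed subgroups, after which commutativity is automatic (the vertical arrows being inclusions and the lower horizontals the corestrictions of the upper ones) and exactness transfers from the top row once the preimages constructed above are seen to stay inside the subgroups. Concretely: for $\fk{t}\in\Theta_x$ with $e\in Unit\{\fk{t}\}$, equation \eqref{Eq:t} gives $\fk{t}\,\sigma(e)={}^{x}\sigma(e)\,\fk{t}$, whence $\Scr{F}(\sigma)(\fk{t})=\sigma^{-1}(e)\,{}^{x}\sigma(e)\,\fk{t}\in\Theta_x$, so $\Scr{F}$ lands in $\Aut{R-ring}{S}^{(\G)}$. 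If $f$ fixes every $\Theta_x$ then inside $S_f$ one has $R\,\Theta_x=\Theta_x\,R\ (=\Theta_x)$, so Lemma \ref{lema:6} makes $\Scr{E}(f)$ $\G$-invariant, while $[R]\in\RPicar{\Z}{R}$ holds because $\fk{z}(e)\,r\,e'=e\,\fk{z}(r)\,e'=e\,r\,\fk{z}(e')$ for every $\fk{z}\in\Z$ and all units $e,e'$; thus $\Scr{E}$ maps $\Aut{R-ring}{S}^{(\G)}$ into $\Scr{P}_{\Z}(S/R)^{\G}$, and $\Scr{O}_l$ sends $\Scr{P}_{\Z}(S/R)^{\G}$ into $\RPicar{\Z}{R}^{\G}$ by definition. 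The $\sigma$ recovered in the second paragraph automatically lies in the common source $\mathcal{U}(\Z)$, and the $f$ recovered in the third lies in $\Aut{R-ring}{S}^{(\G)}$ precisely because $\G$-invariance of $(\,[P],\phi,[X]\,)$ translates, through Lemma \ref{lema:6}, into $f(\Theta_x)=\Theta_x$.
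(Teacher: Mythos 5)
Your proof is correct and follows essentially the same route as the paper, whose own proof consists of citing \cite[Proposition 5.1]{ElKaoutit/Gomez:2010} for the exactness of the top row and declaring the bottom row to be a routine consequence of the definitions --- precisely the corestriction argument you carry out. The only substantive difference is that you reprove the top row from scratch (including the reconstruction of $\sigma$ from a trivialization of $S_f$, which is indeed the delicate point) instead of invoking the earlier paper.
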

\begin{proof}
The exactness of the first row was proved in \cite[Proposition
5.1]{ElKaoutit/Gomez:2010}. The commutativity as well as the
exactness
of the second row, using routine computations, are immediately
deduced from the definition of the involved maps.
\end{proof}

The conditions under assumption, that is, the existence of $\overline{\Theta}
: \G \to \Inv{R}{S}$ a homomorphism of groups with a given
extension of rings $R \subseteq S$ with the same set of local units
$\Sf{E}$, are satisfied for the extension $R \subseteq
\Delta(\Theta)$.
Precisely, the map $\Theta$ factors thought out a homomorphism of
groups
$\overline{\Theta}': \G \to \Inv{R}{\Delta(\Theta)}, \lr{x \mapsto \Theta_x}$,
as
$R \subseteq \Delta(\Theta)$ is an extension of rings with the same
set of local units $\Sf{E}$.
Thus, applying Proposition \ref{prop:6} to this extension, we obtain
the first statement of the following corollary.

\begin{corollary}\label{coro:1}
Consider the generalized crossed product $\Delta\,:=\,\Delta(\Theta)$
of $S$ with $\G$. Then there is an exact sequence of groups
$$
\xymatrix{ 1 \ar@{->}[r] & 
\mathcal{U}(\Z) \ar@{->}[r] & \Aut{R-ring}{\Delta}^{(\G)} \ar@{->}[r]
& \Scr{P}_{\Z}(\Delta/R)^{\G} \ar@{->}[r] &
\RPicar{\Z}{R}^{\G}.  }
$$  
In particular, we have the following exact sequence of groups
$$
\xymatrix{ 1 \ar@{->}[r] & 
H_{\Theta}^1(\G, \, \mathcal{U}(\Z)) \ar@{->}^-{\Scr{S}_1}[r] &
\Scr{P}_{\Z}(\Delta/R)^{\G} \ar@{->}^-{\Scr{S}_2}[r] &
\RPicar{\Z}{R}^{\G}.  }
$$  
\end{corollary}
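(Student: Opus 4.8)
The plan is to derive both exact sequences from Proposition \ref{prop:6}, applied not to the fixed extension $R\subseteq S$ but to the extension $R\subseteq\Delta:=\Delta(\Theta)$. As noted just before the statement, $\Delta$ is a ring with the same set of local units $\Sf{E}$ and $\overline{\Theta}$ lifts to a homomorphism $\overline{\Theta}':\G\to\Inv{R}{\Delta}$, $x\mapsto\Theta_x$; thus the hypotheses of Proposition \ref{prop:6} hold with $S$ replaced by $\Delta$. Its second row then reads
\[
\mathcal{U}(\Z)\xrightarrow{\ \Scr{F}\ }\Aut{R-ring}{\Delta}^{(\G)}\xrightarrow{\ \Scr{E}\ }\Scr{P}_{\Z}(\Delta/R)^{\G}\xrightarrow{\ \Scr{O}_l\ }\RPicar{\Z}{R}^{\G},
\]
and it is exact at its two inner terms. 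This is precisely the first displayed sequence, so nothing beyond invoking Proposition \ref{prop:6} is needed for it.

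The essential work is to recognize the inner part of this sequence as a fragment of the cohomology $H^*_{\Theta}(\G,\mathcal{U}(\Z))$. First I would identify $\Aut{R-ring}{\Delta}^{(\G)}$ with the group $Z^1_{\Theta}(\G,\mathcal{U}(\Z))$ of $1$-cocycles. An $f\in\Aut{R-ring}{\Delta}^{(\G)}$ is a graded $R$-ring automorphism fixing $R=\Theta_1$ and preserving every $\Theta_x$, hence decomposes as $f=\bigoplus_x f_x$ with $f_x\in\Aut{R-R}{\Theta_x}$, $f_1=\mathrm{id}$, subject to $f_{xy}(\fk{u}_x\fk{u}_y)=f_x(\fk{u}_x)f_y(\fk{u}_y)$. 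Setting $c_x:=\widetilde{f_x}\in\mathcal{U}(\Z)$ via the group isomorphism of Lemma \ref{lema:4}, this multiplicativity translates, on combining Lemma \ref{lema:4} with the commutation rule \eqref{Eq:t} of Lemma \ref{lema:5}, into $c_{xy}=c_x\,{}^xc_y$ together with $c_1=1$, i.e.\ $c\in Z^1_{\Theta}(\G,\mathcal{U}(\Z))$. Since each $\widetilde{(-)}$ is a group isomorphism, $f\mapsto(c_x)_x$ is an isomorphism of groups $\Aut{R-ring}{\Delta}^{(\G)}\cong Z^1_{\Theta}(\G,\mathcal{U}(\Z))$.

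Next I would compute the image of $\Scr{F}$ under this identification. For $\sigma\in\mathcal{U}(\Z)$ and $\fk{t}\in\Theta_x$ with unit $e$, the definition \eqref{Eq:F} together with \eqref{Eq:t} gives $\Scr{F}(\sigma)(\fk{t})=\sigma^{-1}(e)\,\fk{t}\,\sigma(e)=\sigma^{-1}(e)\,{}^x\sigma(e)\,\fk{t}$, so the $x$-component of $\Scr{F}(\sigma)$ corresponds to $\sigma^{-1}\,{}^x\sigma\in\mathcal{U}(\Z)$. As $\sigma$ ranges over $\mathcal{U}(\Z)$, the assignment $x\mapsto\sigma^{-1}\,{}^x\sigma$ ranges exactly over the subgroup $B^1_{\Theta}(\G,\mathcal{U}(\Z))$ of $1$-coboundaries. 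Hence, under the isomorphism of the previous paragraph, $\ker\Scr{E}=\mathrm{im}\,\Scr{F}$ corresponds to $B^1_{\Theta}(\G,\mathcal{U}(\Z))$, so that
\[
\mathrm{im}\,\Scr{E}\;=\;\ker\Scr{O}_l\;\cong\;\Aut{R-ring}{\Delta}^{(\G)}\big/\mathrm{im}\,\Scr{F}\;\cong\;Z^1_{\Theta}(\G,\mathcal{U}(\Z))\big/B^1_{\Theta}(\G,\mathcal{U}(\Z))\;=\;H^1_{\Theta}(\G,\mathcal{U}(\Z)).
\]
This yields the second sequence: the map $\Scr{S}_1$ is the composite of this isomorphism $H^1_{\Theta}(\G,\mathcal{U}(\Z))\xrightarrow{\cong}\mathrm{im}\,\Scr{E}$ with the inclusion $\mathrm{im}\,\Scr{E}\hookrightarrow\Scr{P}_{\Z}(\Delta/R)^{\G}$, hence it is injective, while $\Scr{S}_2:=\Scr{O}_l$ satisfies $\ker\Scr{S}_2=\ker\Scr{O}_l=\mathrm{im}\,\Scr{E}=\mathrm{im}\,\Scr{S}_1$, which is the exactness at $\Scr{P}_{\Z}(\Delta/R)^{\G}$.

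The main obstacle I anticipate is the faithful translation of the multiplicativity of $f$ (and of the inner automorphism $\Scr{F}(\sigma)$) into the cocycle, respectively coboundary, identities. This is where the local-unit setting bites: a chosen $e\in Unit\{\fk{u}_x,\fk{u}_y\}$ need not be a two-sided unit for the product $\fk{u}_x\fk{u}_y$, nor for the elements in a decomposition $e=\sum_{(e)}x_e\bara{x}_e$, so one must pass to auxiliary units $e_1\in Unit\{x_e,\bara{x}_e\}$ and exploit the $R$-bilinearity of all maps in order to descend identities from elements back to the level of $\mathcal{U}(\Z)$, exactly as in the proofs of Propositions \ref{prop:3} and \ref{prop:2}. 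Once this bookkeeping is in place, the cocycle and coboundary conditions follow mechanically from Lemma \ref{lema:4} and \eqref{Eq:t}, and the two sequences are established.
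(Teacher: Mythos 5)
Your proposal is correct and follows essentially the same route as the paper: the first sequence is obtained by applying Proposition \ref{prop:6} to the extension $R\subseteq\Delta(\Theta)$, and the second by identifying $\Aut{R-ring}{\Delta}^{(\G)}$ with $Z^1_{\Theta}(\G,\mathcal{U}(\Z))$ via Lemma \ref{lema:4} and equation \eqref{Eq:t}, and the image of $\Scr{F}$ with $B^1_{\Theta}(\G,\mathcal{U}(\Z))$, exactly as in the paper's proof. The bookkeeping with auxiliary units that you flag as the main obstacle is indeed the only delicate point, and the paper handles it the same way you propose.
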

\begin{proof}
We only need to check the particular statement. So let $f
\in\Aut{R-ring}{\Delta}^{(\G)}$, this gives
a family of $R$-bilinear isomorphisms
$f_x:\Theta_x \to \Theta_x$, $x \in \G$, which by Lemma \ref{lema:4}
they lead to a map
$$ 
\sigma: \G \longrightarrow \mathcal{U}(\Z),\quad \lr{x \longmapsto
\widetilde{f}_x},
$$
as $\Theta_x \in \Inv{R}{S}$. An easy verification, using \eqref{Eq:t}, shows that, for
every unit $e \in\Sf{E}$, we have
\begin{equation}\label{Eq:33}
\widetilde{f}_{xy}(e)\,\,=\,\, \widetilde{f}_{x}(e)
{}^x\widetilde{f}_{y}(e).
\end{equation}
Therefore, $$\sigma_{xy}\,=\, \sigma_x \circ {}^x \sigma_y, \quad
\text{ for every } x, y \in\G.$$
That is, $\sigma$ is a normalized $1$-cocycle, i.e. $\sigma \in
Z_{\Theta}^1(\G, \mathcal{U}(\Z))$.

Conversely, given a normalized $1$-cocycle $\gamma \in Z_{\Theta}^1(\G,
\mathcal{U}(\Z))$, we consider the $R$-bilinear isomorphisms
$$g_x : \Theta_x \longrightarrow \Theta_x, \quad \lr{u \longmapsto
\gamma_x(e) u}, \text{ where } e \in Unit\{u\}, u \in\Theta_x.$$
Clearly, the direct sum $g:=\oplus_{x \in\G} g_x$ defines an
automorphism of generalized crossed product.
Hence, $g \in  \Aut{R-ring}{\Delta}^{(\G)}$. 

In conclusion we have constructed a mutually inverse maps which in
fact establishes an isomorphism of groups
\begin{equation}\label{Eq:DZU}
\Aut{R-ring}{\Delta}^{(\G)}\,\, \cong\,\, Z_{\Theta}^1(\G, \mathcal{U}(\Z)).  
\end{equation}
Now let us consider en element $\fk{u}
\in\mathcal{U}(\Z)=\Aut{R-R}{R}$, and
its image $\Scr{F}(\fk{u})$ under the map $\Scr{F}$ defined in
equation \eqref{Eq:F} using the extension
$R \subseteq \Delta$. 
Then, for every element  $a \in\Theta_x$,
we have 
\begin{eqnarray*}
\Scr{F}(\fk{u}) (a) &=& \fk{u}^{-1}(e) \,a \, \fk{u}(e),\quad e \in
Unit\{a\} \\
&\overset{\eqref{Eq:t}}{=}& \fk{u}^{-1}(e) \,{}^x\fk{u}(e) \, a.
\end{eqnarray*}
Therefore, $\fk{u}$ defines, under the isomorphism of
\eqref{Eq:DZU}, a $1$-coboundary,
$$ \G \longrightarrow \mathcal{U}(\Z),\quad \lr{x \longmapsto
{}^x\fk{u} \circ \fk{u}^{-1}}.$$
We have then constructed an isomorphism of groups 
$$ \Aut{R-ring}{\Delta}^{(\G)} / \Aut{R-R}{R} \,\, \cong \,\, H_{\Theta}^1(\G,
\, \mathcal{U}(\Z)).$$
The exactness follows now from the first statement which is a
particular case of Proposition \ref{prop:6}.
\end{proof}

\subsection{The second exact sequence.}\label{subsec:2}

For any element $[P] \in \Picar{R}$, we will denote by $[P^{-1}]$
its inverse element. Let us consider the following groups
$$
\RPicar{\Z}{R}^{(\G)}\,\,:=\,\, \left\{ \underset{}{} [P] \in
\RPicar{\Z}{R} |\,\, P\tensor{R}\Theta_x\tensor{R}P^{-1} \,\sim\,
\Theta_x,
\,\text{ for all } x \in \G \right\}.
$$
A routine computation shows that this is a subgroup of
$\RPicar{\Z}{R}$ which contains the subgroup of all $\G$-invariant
elements. That is,
there is a homomorphism of groups $\RPicar{\Z}{R}^{\G} \to
\RPicar{\Z}{R}^{(\G)}$.

Given an element $[P] \in \RPicar{\Z}{R}^{(\G)}$ and set
$\Omega_x\,:=\,P\tensor{R}\Theta_x\tensor{R}P^{-1}$, for every
$x \in\G$. Consider now the family of $R$-bilinear isomorphisms 
$$\F^{\Omega}_{x,y}: \Omega_x\tensor{R}\Omega_y \to \Omega_{xy}$$
which is defined using the factor maps $\F^{\Theta}_{x,y}$ and the
isomorphism $P\tensor{R}P^{-1} \cong R$.
This is clearly a factor map relative 
to the homomorphism of groups $\overline{\Omega}: \G \to \Inv{R}{S}$ sending 
$x \mapsto \Omega_x$. This gives us a generalized crossed product
$\Delta(\Omega)$, and in fact
establishes a homomorphism of groups 
$$ \Scr{L}: \RPicar{\Z}{R}^{(\G)} \to \Scr{C}(\Theta/R),$$
where the right hand group was defined in Section \ref{sec:3}. The
proof of the following lemma is left to the reader.
\begin{lemma}\label{lema:7}
Keep the above notations. There is a commutative diagram of groups
$$
\xymatrix{ \RPicar{\Z}{R}^{\G} \ar@^{(->}[d] \ar@{->}^-{\Scr{L}}[rr]
& & \Scr{C}_0(\Theta/R) \ar@^{(->}[d]
\\ \RPicar{\Z}{R}^{(\G)} \ar@{->}^-{\Scr{L}}[rr] & &
\Scr{C}(\Theta/R). }
$$ 
\end{lemma}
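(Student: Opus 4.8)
The plan is to reduce the whole statement to a single bimodule computation. Both horizontal arrows are given by the same rule $[P] \mapsto [\Delta(\Omega)]$, where $\Omega_x = P\tensor{R}\Theta_x\tensor{R}P^{-1}$, and both vertical arrows are the evident inclusions (the right one coming from Proposition \ref{prop:4}, the left one being a subgroup inclusion). Consequently, once I check that for a $\G$-invariant class the crossed product $\Delta(\Omega)$ actually lies in $\Scr{C}_0(\Theta/R)$, commutativity of the square will be automatic, since the upper $\Scr{L}$ is then nothing but the restriction of the lower one (and hence again a homomorphism of groups). So the real task is: if $[P] \in \RPicar{\Z}{R}^{\G}$, show $\Omega_x \cong \Theta_x$ as $R$-bimodules for every $x \in \G$, strengthening the mere similarity $\Omega_x \sim \Theta_x$ which already places $[P]$ in $\RPicar{\Z}{R}^{(\G)}$.

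First I would rewrite $\G$-invariance in a more usable form. By definition of the $\G$-action, $[P] \in \RPicar{\Z}{R}^{\G}$ means $\Theta_x\tensor{R}P\tensor{R}\Theta_{x^{-1}} \cong P$ for all $x$. Tensoring this isomorphism on the right by $\Theta_x$ and invoking that $\overline{\Theta}$ is a homomorphism into $\Inv{R}{S}$, so that $\Theta_{x^{-1}}\tensor{R}\Theta_x \cong R$ through the multiplication of $S$ (the isomorphism recalled in Remark \ref{rem:triangulo}), yields the equivalent commutation relation $\Theta_x\tensor{R}P \cong P\tensor{R}\Theta_x$ for all $x \in \G$. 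Feeding this back into the definition of $\Omega_x$ and using $P\tensor{R}P^{-1} \cong R$ gives the chain $\Omega_x = P\tensor{R}\Theta_x\tensor{R}P^{-1} \cong \Theta_x\tensor{R}P\tensor{R}P^{-1} \cong \Theta_x$, which is exactly the desired component-wise isomorphism, so that $[\Delta(\Omega)] \in \Scr{C}_0(\Theta/R)$. Read only up to similarity, the same chain also justifies the left vertical inclusion $\RPicar{\Z}{R}^{\G} \subseteq \RPicar{\Z}{R}^{(\G)}$.

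There is no serious obstacle here; the only point to keep in mind is that membership in $\Scr{C}_0(\Theta/R)$ demands solely the component-wise isomorphisms $\Omega_x \cong \Theta_x$, and not an isomorphism of generalized crossed products respecting the factor maps. Thus one need not verify any compatibility of the above isomorphisms with the maps $\F^{\Omega}_{-,-}$, and the entire verification stays at the level of the Picard group of $R$.
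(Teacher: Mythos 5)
Your proposal is correct, and since the paper explicitly leaves this proof to the reader, your argument is precisely the intended routine verification: the only nontrivial content is that $\G$-invariance of $[P]$, combined with $\Theta_{x^{-1}}\tensor{R}\Theta_x \cong R$, upgrades the similarity $P\tensor{R}\Theta_x\tensor{R}P^{-1} \sim \Theta_x$ to a genuine $R$-bimodule isomorphism, which by the definition of $\Scr{C}_0(\Theta/R)$ (component-wise isomorphism only, no compatibility with factor maps required) places $\Scr{L}([P])$ in $\Scr{C}_0(\Theta/R)$. The commutativity of the square is then automatic, as you observe, because the top arrow is the restriction of the bottom one and the vertical arrows are inclusions.
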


Now we can show  our second exact sequence.

\begin{proposition}\label{prop:7}
Let  $\Delta(\Theta)$ be a
generalized crossed product of $S$ with $\G$. Then there is an exact
sequence of groups
$$
\xymatrix{ \Scr{P}_{\Z}(\Delta(\Theta)/R)^{\G}
\ar@{->}^-{\Scr{S}_2}[r] & \RPicar{\Z}{R}^{\G}
\ar@{->}^-{\Scr{S}_3}[r] & \Scr{C}_0(\Theta/R), }
$$ 
where $\Scr{S}_2$ and $\Scr{S}_3$ are, respectively, the
restrictions of the map $\Scr{O}_l$ given in equation \eqref{Eq:Ol}
attached to the extension
$R \subseteq \Delta(\Theta)$, and of the map defined in Lemma
\ref{lema:7}.
\end{proposition}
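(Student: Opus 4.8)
The exactness asserted is at the middle term $\RPicar{\Z}{R}^{\G}$, and since $\Scr{S}_2$ and $\Scr{S}_3$ are homomorphisms of groups (being restrictions of $\Scr{O}_l$ and of the map $\Scr{L}$ of Lemma \ref{lema:7}), proving it amounts to the two inclusions $\mathrm{Im}(\Scr{S}_2)\subseteq\ker(\Scr{S}_3)$ and $\ker(\Scr{S}_3)\subseteq\mathrm{Im}(\Scr{S}_2)$. Here $\Scr{S}_2$ sends an element of $\Scr{P}_{\Z}(\Delta/R)^{\G}$ represented by an $R$-bilinear map $\phi:P\to X$ to $[P]$, whereas $\Scr{S}_3$ sends $[P]$ to $[\Delta(\Omega)]$, the class in $\Scr{C}_0(\Theta/R)$ of the generalized crossed product attached to $\Omega_x=P\tensor{R}\Theta_x\tensor{R}P^{-1}$. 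Since the neutral element of $\Scr{C}_0(\Theta/R)$ is $[\Delta(\Theta)]$ (Proposition \ref{prop:4}), the relation $\Scr{S}_3([P])=1$ means precisely that $\Delta(\Omega)\cong\Delta(\Theta)$ as generalized crossed products. The plan is to read both inclusions off the single description of $\Scr{P}_{\Z}(\Delta/R)^{\G}$ provided by Lemma \ref{lema:6}, applied to the extension $R\subseteq\Delta:=\Delta(\Theta)$, namely the identity $\phi(P)\Theta_x=\Theta_x\phi(P)$ inside $X$ for every $x\in\G$.

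For the first inclusion I start from $\phi:P\to X$ representing an element of $\Scr{P}_{\Z}(\Delta/R)^{\G}$. As $\phi$ is injective and one of the canonical maps, say $\bara{\phi}_l:P\tensor{R}\Delta\to X$, is an isomorphism (\cite[Lemma 3.1]{ElKaoutit/Gomez:2010}), the bimodule $X$ carries a $\G$-grading with homogeneous components $X_x=\phi(P)\Theta_x$. The invariance condition $\phi(P)\Theta_x=\Theta_x\phi(P)$ exhibits $X_x$ as the common image of $P\tensor{R}\Theta_x$ and $\Theta_x\tensor{R}P$, hence yields an $R$-bilinear isomorphism $c_x:P\tensor{R}\Theta_x\overset{\cong}{\longrightarrow}\Theta_x\tensor{R}P$; tensoring on the right by $P^{-1}$ and contracting $P\tensor{R}P^{-1}\cong R$ produces $g_x:\Omega_x\overset{\cong}{\longrightarrow}\Theta_x$ with $g_1$ the canonical identification. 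I would then check that $(g_x)_{x\in\G}$ intertwines $\F^{\Omega}_{x,y}$ with $\F^{\Theta}_{x,y}$, i.e. is a morphism of generalized crossed products; this is exactly the associativity of the multiplication of the $\Delta$-bimodule $X$ read on $X_x\cdot X_y\subseteq X_{xy}$. Thus $\Delta(\Omega)\cong\Delta(\Theta)$ and $\Scr{S}_3\circ\Scr{S}_2=1$.

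For the reverse inclusion I start from $[P]\in\RPicar{\Z}{R}^{\G}$ together with $R$-bilinear isomorphisms $g_x:\Omega_x=P\tensor{R}\Theta_x\tensor{R}P^{-1}\to\Theta_x$ assembling into an isomorphism $\Delta(\Omega)\cong\Delta(\Theta)$ of generalized crossed products, and I must manufacture a witness $\phi:P\to X$ in $\Scr{P}_{\Z}(\Delta/R)^{\G}$ lying over $[P]$. Set $X=\bigoplus_{x\in\G}X_x$ with $X_x=P\tensor{R}\Theta_x$, the right $\Delta$-action given by $P\tensor{}\F^{\Theta}_{x,y}$. For the left action, invert $g_y$ to obtain $d_y:\Theta_y\tensor{R}P\overset{\cong}{\longrightarrow}P\tensor{R}\Theta_y$ and let $\Theta_y$ act on $X_x$ through $(P\tensor{}\F^{\Theta}_{y,x})\circ(d_y\tensor{R}\Theta_x):\Theta_y\tensor{R}X_x\to X_{yx}$. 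Taking $\phi:P\cong X_1=P\tensor{R}R\hookrightarrow X$ the degree-one inclusion, strong gradedness forces $\phi(P)\Theta_x=X_x=\Theta_x\phi(P)$, so the invariance condition of Lemma \ref{lema:6} holds; as $[P]\in\RPicar{\Z}{R}$ by hypothesis, the triple belongs to $\Scr{P}_{\Z}(\Delta/R)^{\G}$ and $\Scr{S}_2$ sends it to $[P]$.

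The main obstacle is the verification, in this second step, that the prescribed $X$ is genuinely an invertible unital $\Delta$-bimodule: one must confirm that the left and right $\Delta$-actions commute and that the left action is associative, and then exhibit an inverse in $\Picar{\Delta}$. Both the bimodule axiom and the associativity are encoded in the hypothesis that $g=\oplus_x g_x$ respects the factor maps $\F^{\Omega}_{-,-}$ and $\F^{\Theta}_{-,-}$ and that $g_1$ is the canonical identification $\Omega_1\cong R$; an inverse is furnished by the symmetric construction built from $g_x^{-1}$ on $P^{-1}\tensor{R}\Delta$, its invertibility ultimately resting on that of $P$ over $R$. These are computations of the same nature as those in the proofs of Propositions \ref{prop:3} and \ref{prop:5}, carried out on common two-sided units of the elements involved, and I would not expect surprises beyond the careful bookkeeping of local units that is already characteristic of the non-unital setting.
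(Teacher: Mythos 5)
Your proposal is correct and follows essentially the same route as the paper: both inclusions are read off the characterization of $\Scr{P}_{\Z}(\Delta(\Theta)/R)^{\G}$ from Lemma \ref{lema:6} via isomorphisms $P\tensor{R}\Theta_x\cong\Theta_x\tensor{R}P$ compatible with the factor maps, and the witness over $[P]$ is built by endowing $P\tensor{R}\Delta(\Theta)$ (the paper uses the mirror-image $\Delta(\Theta)\tensor{R}P$, invoking the twist-map conditions of \cite[Eq.\ (5.1), (5.2)]{ElKaoutit:2010a}) with a $\Delta(\Theta)$-bimodule structure whose inverse in $\Picar{\Delta(\Theta)}$ is $P^{-1}\tensor{R}\Delta(\Theta)$. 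The remaining verifications you defer are exactly the ones the paper also treats as routine.
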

\begin{proof}
Let $[P] \in \RPicar{\Z}{R}^{\G}$ such that
$\Scr{S}_3([P])\,=\,[\Delta(\Omega)]\,=\, [\Delta(\Theta)]$, where
for every $x \in\G$, $\Omega_x\,=\,
P\tensor{R}\Theta_x\tensor{R}P^{-1}$. This means that
we have an isomorphism $\Delta(\Omega) \cong \Delta(\Theta)$ of
generalized crossed products, and so a family of $R$-bilinear
isomorphisms
$\iota_x: \Theta_x \tensor{R}P \, \cong \, P\tensor{R}\Theta_x$, $x
\in \G$. We then obtain an $R$-bilinear
isomorphism
$$ \iota=\underset{x \in\,\G}{\oplus} \iota_x:\,
\Delta(\Theta)\tensor{R} P \longrightarrow
P\tensor{R}\Delta(\Theta),$$
which in fact satisfies the conditions stated in \cite[Eq. (5.1) and
(5.2), p. 161]{ElKaoutit:2010a}.
This implies that $\Delta(\Theta)\tensor{R}P$ admits a structure 
of $\Delta(\Theta)$-bimodule whose underlying left structure is
given by that of ${}_{\Delta(\Theta)}\Delta(\Theta)$. In this way,
the previous map $\iota$ becomes an isomorphism of
$(\Delta(\Theta), R)$-bimodules, and so we have
\begin{eqnarray*}
(\Delta(\Theta)\tensor{R}P )\tensor{\Delta(\Theta)}
(P^{-1}\tensor{R}\Delta(\Theta)) &\cong &
(\Delta(\Theta)\tensor{R}P )\tensor{\Delta(\Theta)}
(\Delta(\Theta)\tensor{R}P^{-1}) \\ &\cong&
\Delta(\Theta)\tensor{R}P\tensor{R}P^{-1} \\ &\cong& \Delta(\Theta),\end{eqnarray*}
an isomorphism of $\Delta(\Theta)$-bimodules. Whence the isomorphic
class of $\Delta(\Theta)\tensor{R}P$ belongs to
$\Picar{\Delta(\Theta)}$.
We have then construct an element
$\objeto{[P]}{\phi}{[\Delta(\Theta)\tensor{R}P]}$ of the group
$\Scr{P}(\Delta(\Theta)/R)$, where $\phi$ is the obvious map
$$\phi: P \longrightarrow \Delta(\Theta)\tensor{R}P,\quad \lr{ p
\longmapsto e\tensor{R}p},$$
where $e \in Unit\{p\}$. It is clear that, for every 
$x \in\G$, the equality $\phi(P)\Theta_x\,=\, \Theta_x \phi(P)$
holds true in $\Delta(\Theta)\tensor{R}P$.
Thus, $\objeto{[P]}{\phi}{[\Delta(\Theta)\tensor{R}P]} \in
\Scr{P}_{\Z}(\Delta(\Theta)/R)^{(\G)}$ and
$$\Scr{S}_2\lr{\objeto{[P]}{\phi}{[\Delta(\Theta)\tensor{R}P]}}\,=\,
[P].$$ This shows the inclusion
${\rm Ker}(\Scr{S}_3) \,\subseteq {\rm Im}(\Scr{S}_2)$.

Conversely, let $[Q] \in{\rm Im}(\Scr{S}_2)$, that is, there exists
an element $\objeto{[Q]}{\psi}{[Y]} \in
\Scr{P}_{\Z}(\Delta(\Theta)/R)^{\G}$. We need to compute the image
$\Scr{S}_3([Q])$. From the choice of $[Q]$ we know that
$[Q] \in \RPicar{\Z}{R}$ and that, for every $x \in \G$, $\psi(Q)
\Theta_x\,=\,\Theta_x \psi(Q)$ in the ${\Delta(\Theta)}$-bimodule
$Y$.
As in the proof of Lemma \ref{lema:6}, the later means that, there
are $R$-bilinear isomorphisms:
$$ f_x: \, Q\tensor{R}\Theta_x \overset{\cong}{\longrightarrow}
\Theta_x\tensor{R}Q, \text{ for every } x \in\G,$$
which convert commutative the following diagrams
$$
\xymatrix@C=40pt{ Q\tensor{R}\Theta_x\tensor{R}\Theta_y
\ar@{->}^-{Q\tensor{}\F^{\Theta}_{x,y}}[rr]
\ar@{->}_-{f_x\tensor{}\Theta_y}[dd] & & Q\tensor{R}\Theta_{xy}
\ar@{->}^-{f_{xy}}[dd] \ar@^{(->}[rd] & \\ & & & Y \\
\Theta_x\tensor{R}Q\tensor{R}\Theta_y
\ar@{->}_-{\Theta_x\tensor{}f_y}[rd] & & \Theta_{xy}\tensor{R}Q
\ar@^{(->}[ru] & \\
& \Theta_x\tensor{R}\Theta_y\tensor{R}Q
\ar@{->}_-{\F^{\Theta}_{x,y}\tensor{}Q}[ru] & & }
$$
Coming back to the image $\Scr{S}_3([Q]):=[\Delta(\Gamma)]$, we know
that
for every $x \in\G$, we have $\Gamma_x=
Q\tensor{R}\Theta_x\tensor{R}Q^{-1}\, \cong \, \Theta_x$, via the
$f_x$'s. The above commutative diagram
shows that these isomorphisms are compatible with the factor maps
$\F^{\Gamma}_{x,y}$ and $\F^{\Theta}_{x,y}$. Therefore,
$\Delta(\Theta)\,\cong\,
\Delta(\Gamma)$ as generalized crossed products, and so
$\Scr{S}_3([Q])\,=\,[\Delta(\Theta)]$ the neutral element of the
group $\Scr{C}_0(\Theta/R)$.
\end{proof}

\subsection{Comparison with a previous alternative exact sequence.}

Let us assume  here that  $\underline{\Theta}: \G \overset{\sigma}{\to} \aut{R} \to \Picar {R}$, as in  Example \ref{Example:B-Rio}.  We consider the extension $R \subset R\G$, where $S:=R\G=\Delta(\Theta)$ is the skew group ring of $R$ by $\G$.  Thus $\bara{\Theta}: \G \to \Inv{R}{S}$ factors though out $\mu$ as in diagram \eqref{Eq:factoriza}.
In what follows, we want to explain the relation between the five terms exact sequence  given in \cite[Theorem 2.8]{Beattie/Del Rio:1999}, and the resulting one by combining   Proposition \ref{prop:7} and Corollary \ref{coro:1}, that is the sequence 
$$
\xymatrix{ 1 \ar@{->}[r] & 
H_{\Theta}^1(\G, \, \mathcal{U}(\Z)) \ar@{->}^-{\Scr{S}_1}[r] & \Scr{P}_{\Z}(\Delta(\Theta)/R)^{\G}
\ar@{->}^-{\Scr{S}_2}[r] & \RPicar{\Z}{R}^{\G}
\ar@{->}^-{\Scr{S}_3}[r] &   H^2_{\Theta}(\G,\U(\Z)) \cong \Scr{C}_0(\Theta/R). }
$$ 
As we argued in Example \ref{Example:B-Rio}, the cohomology  $H^*_{\Theta}(\G,\U(\Z))$ coincides with that considered in \cite{Beattie/Del Rio:1999}. Thus, following the notation of \cite{Beattie/Del Rio:1999}, the result \cite[Theorem 2.8]{Beattie/Del Rio:1999} says that 
$$
\xymatrix{ 1 \ar@{->}[r] & 
H_{\Theta}^1(\G, \, \mathcal{U}(\Z)) \ar@{->}^-{\varphi_{\sigma}}[r] & \G\text{-}\Picar{R}
\ar@{->}^-{F_{\sigma}}[r] & \Picar{R}^{\G}
\ar@{->}^-{\Phi_{\sigma}}[r] &   H^2_{\Theta}(\G,\U(\Z)) }
$$
is an exact sequence (of pointed sets). It is not difficult to see that the restriction of $\Phi_{\sigma}$ to $\RPicar{\Z}{R}^{\G}
$ gives exactly $\Scr{S}_3$. On the other hand, for any element $\objeto{[P]}{\phi}{[X]} \in \Scr{P}_{\Z}(\Delta(\Theta)/R)^{\G}$, we will define an object $(P,\tau)$ where $\tau: \G \to {\bf Aut}_{\mathbb{Z}}(P)$  (to the group of additive automorphisms of $P$) is a $\G$-homomorphism \cite[Definition 2.3]{Beattie/Del Rio:1999}. As in the proof of  Lemma \ref{lema:6}, we know that there is a family of  $R$-bilinear isomorphisms $\Sf{j}_x:  R^x\tensor{R}P \to P\tensor{R}R^x$, $x \in \G$, so we define $\tau$ by the following composition
$$
\xymatrix{\tau_x: {}^{x^-1}P \ar@{->}^-{\cong}[r] & R^x\tensor{R}P \ar@{->}^-{\Sf{j}_x}[r] & P\tensor{R}R^x \ar@{->}^-{\cong}[r] & P^x,   }
$$
here $P^y$ denotes the $R$-bimodule constructed from $P$ by twisting its right module structure using the automorphism $y$, and the same construction is used on the left. To check that $\tau$ is a homomorphism of groups, one uses the equality 
$$
\Sf{j}_{xy} \circ (\F^{\Theta}_{x,y}\tensor{R}P)\,\,=\,\, (P\tensor{R}\F^{\Theta}_{x,y}) \circ (\Sf{j}_x\tensor{R}R^y) \circ (R^x\tensor{R}\Sf{j}_y), \quad \text{ for all } x,y \in \G.
$$
Now, if we  take another representative, that is, if we  assume that 
$$
\objeto{[P]}{\phi}{[X]}\,\,=\,\, \objeto{[P']}{\phi'}{[X']}  \in \Scr{P}_{\Z}(\Delta(\Theta)/R)^{\G},
$$
then, by the definition of the group $\Scr{P}(\Delta(\Theta)/R)$, see \cite[Section 3]{ElKaoutit/Gomez:2010}, there are bilinear isomorphisms $f: P \to P'$ and $g: X \to X'$ such that 
$$
\xymatrix{ P \ar@{->}^-{\phi}[r]  \ar@{->}_-{f}[d] & X \ar@{->}^-{g}[d] \\  P' \ar@{->}_-{\phi'}[r] & X' }
$$
commutes.  In this way we have also the following commutative diagram 
$$
\xymatrix{ R^x\tensor{R}P \ar@{->}^-{\Sf{j_x}}[r]  \ar@{->}_-{R^x\tensor{}f}[d] & P\tensor{R}R^x \ar@{->}^-{f\tensor{}R^x}[d] \\  R^x\tensor{R}P' \ar@{->}_-{\Sf{j}'_x}[r] &  P'\tensor{R}R^x,}
$$ which says that $\tau'_x \circ f\,=\, f \circ \tau_x$. This implies that $[P,\tau] \,=\, [P',\tau']$ in the group $\G\text{-}\Picar{R}$. We then have constructed a monomorphism of groups $\Scr{P}_{\Z}(\Delta(\Theta)/R)^{\G} \to \G\text{-}\Picar{R}$ such that 
the restriction of the map $F_{\sigma}$ to  $\Scr{P}_{\Z}(\Delta(\Theta)/R)^{\G}$ is exactly $\Scr{S}_2$.  We leave to the reader to check that we furthermore obtain a commutative diagram
$$
\xymatrix{ 1 \ar@{->}[r] & 
H_{\Theta}^1(\G, \, \mathcal{U}(\Z)) \ar@{=}[d]  \ar@{->}^-{\Scr{S}_1}[r] & \Scr{P}_{\Z}(\Delta(\Theta)/R)^{\G} \ar@{^{(}->}[d]
\ar@{->}^-{\Scr{S}_2}[r] & \RPicar{\Z}{R}^{\G} \ar@{^{(}->}[d]
\ar@{->}^-{\Scr{S}_3}[r] &   H^2_{\Theta}(\G,\U(\Z)) \ar@{=}[d] \\
1 \ar@{->}[r] & 
H_{\Theta}^1(\G, \, \mathcal{U}(\Z))   \ar@{->}^-{\varphi_{\sigma}}[r] & \G\text{-}\Picar{R}
\ar@{->}^-{F_{\sigma}}[r] & \Picar{R}^{\G}
\ar@{->}^-{\Phi_{\sigma}}[r] &   H^2_{\Theta}(\G,\U(\Z)).  
}
$$

\subsection{The third exact sequence.}\label{subsec:3}
We define the group $\Scr{B}(\Theta/R)$ as the quotient group
\begin{equation}\label{Eq:B}
\xymatrix{ \RPicar{\Z}{R}^{(\G)} \ar@{->}^-{\Scr{L}}[r] &
\Scr{C}(\Theta/R) \ar@{->}[r] & \Scr{B}(\Theta/R) \ar@{->}[r] & 1,}
\end{equation}
where $\Scr{L}$ is the map defined in Lemma \ref{lema:7}.

The third exact sequence of groups is stated in the following

\begin{proposition}\label{prop:8}
Let $\Delta(\Theta)$ be a
generalized crossed product of $S$ with $\G$. Then there is an exact
sequence of groups
$$
\xymatrix{ \RPicar{\Z}{R}^{\G} \ar@{->}^-{\Scr{S}_3}[r] &
\Scr{C}_0(\Theta/R) \ar@{->}^-{\Scr{S}_4}[r] & \Scr{B}(\Theta/R), }
$$
where $\Scr{S}_3$ and $\Scr{S}_4$ are constructed from the sequence
\eqref{Eq:B} and the
commutativity of the diagram stated in  Lemma \ref{lema:7}.
\end{proposition}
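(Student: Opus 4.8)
The plan is to make the two maps explicit and then check exactness at $\Scr{C}_0(\Theta/R)$ by proving the two inclusions ${\rm Im}(\Scr{S}_3)\subseteq {\rm Ker}(\Scr{S}_4)$ and ${\rm Ker}(\Scr{S}_4)\subseteq {\rm Im}(\Scr{S}_3)$. First I would record that, by Lemma \ref{lema:7}, $\Scr{S}_3$ is the map $\Scr{L}\colon \RPicar{\Z}{R}^{\G}\to \Scr{C}_0(\Theta/R)$ sending $[P]\mapsto [\Delta(\Omega)]$ with $\Omega_x=P\tensor{R}\Theta_x\tensor{R}P^{-1}$, and that $\Scr{S}_4$ is the composite of the inclusion $\Scr{C}_0(\Theta/R)\hookrightarrow \Scr{C}(\Theta/R)$ with the canonical projection $\pi\colon \Scr{C}(\Theta/R)\to \Scr{B}(\Theta/R)$ of \eqref{Eq:B}. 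Since $\Scr{L}$ is a homomorphism into the abelian group $\Scr{C}(\Theta/R)$ (Proposition \ref{prop:4}), its image is a subgroup, so $\Scr{B}(\Theta/R)$, $\pi$ and hence $\Scr{S}_4$ are well defined homomorphisms of groups.

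The inclusion ${\rm Im}(\Scr{S}_3)\subseteq {\rm Ker}(\Scr{S}_4)$ should be immediate: for $[P]\in \RPicar{\Z}{R}^{\G}$ the commutativity of the square in Lemma \ref{lema:7} identifies $\Scr{S}_3([P])$, viewed inside $\Scr{C}(\Theta/R)$, with the value of $\Scr{L}$ on the larger group $\RPicar{\Z}{R}^{(\G)}$; this lies in ${\rm Im}(\Scr{L})={\rm Ker}(\pi)$ by \eqref{Eq:B}, so $\Scr{S}_4(\Scr{S}_3([P]))=\pi(\Scr{L}([P]))=1$.

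The substantial direction is ${\rm Ker}(\Scr{S}_4)\subseteq {\rm Im}(\Scr{S}_3)$. I would start from $[\Delta(\Lambda)]\in \Scr{C}_0(\Theta/R)$ with $\Scr{S}_4([\Delta(\Lambda)])=1$; this means $[\Delta(\Lambda)]\in {\rm Im}(\Scr{L})$, so there is $[P]\in \RPicar{\Z}{R}^{(\G)}$ with $\Scr{L}([P])=[\Delta(\Lambda)]$ in $\Scr{C}(\Theta/R)$, where $\Omega_x=P\tensor{R}\Theta_x\tensor{R}P^{-1}$ and $[\Delta(\Omega)]=[\Delta(\Lambda)]$. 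The key is to upgrade the mere membership $[P]\in \RPicar{\Z}{R}^{(\G)}$ to genuine $\G$-invariance $[P]\in \RPicar{\Z}{R}^{\G}$. Because the class lands in $\Scr{C}_0$, and not merely in $\Scr{C}$, I obtain honest $R$-bilinear isomorphisms $P\tensor{R}\Theta_x\tensor{R}P^{-1}\cong \Theta_x$ for every $x\in \G$. Tensoring on the right by $P$ and using $P^{-1}\tensor{R}P\cong R$ gives $P\tensor{R}\Theta_x\cong \Theta_x\tensor{R}P$; tensoring this isomorphism on the right by $\Theta_{x^{-1}}$ and using $\Theta_x\tensor{R}\Theta_{x^{-1}}\cong \Theta_1\cong R$ then yields $\Theta_x\tensor{R}P\tensor{R}\Theta_{x^{-1}}\cong P$, that is ${}^x[P]=[P]$ for all $x$. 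Hence $[P]\in \Picar{R}^{\G}$, and since already $[P]\in \RPicar{\Z}{R}$ we conclude $[P]\in \RPicar{\Z}{R}^{\G}$. Then $\Scr{S}_3([P])=\Scr{L}([P])=[\Delta(\Omega)]=[\Delta(\Lambda)]$, using the injectivity of $\Scr{C}_0(\Theta/R)\hookrightarrow \Scr{C}(\Theta/R)$ to read the equality back inside $\Scr{C}_0$, which proves the inclusion.

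I expect the only delicate point to be exactly this upgrade from similarity to isomorphism: one must exploit that $\Scr{S}_4([\Delta(\Lambda)])=1$ forces the representative $\Delta(\Omega)$ to have components $\Omega_x$ \emph{isomorphic} (not just similar) to $\Theta_x$, which is precisely what distinguishes $\Scr{C}_0$ from $\Scr{C}$ and what makes the cancellation of $P$ with $P^{-1}$ and of $\Theta_x$ with $\Theta_{x^{-1}}$ legitimate, thereby yielding the $\G$-invariance of $[P]$.
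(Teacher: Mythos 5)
Your proposal is correct and follows essentially the same route as the paper: the inclusion ${\rm Im}(\Scr{S}_3)\subseteq{\rm Ker}(\Scr{S}_4)$ is read off from the diagram of Lemma \ref{lema:7} and the defining sequence \eqref{Eq:B}, and for the converse one lifts a kernel element to some $[P]\in\RPicar{\Z}{R}^{(\G)}$ with $\Scr{L}([P])=[\Delta(\Lambda)]$ and uses membership in $\Scr{C}_0(\Theta/R)$ to obtain genuine isomorphisms $P\tensor{R}\Theta_x\tensor{R}P^{-1}\cong\Theta_x$, hence $[P]\in\RPicar{\Z}{R}^{\G}$. Your write-up merely makes explicit the cancellation argument that the paper leaves implicit in the phrase ``which means that $[P]\in\RPicar{\Z}{R}^{\G}$''.
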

\begin{proof}
The inclusion ${\rm Im}(\Scr{S}_3) \subseteq {\rm Ker}(\Scr{S}_4)$
is by construction trivial.
Now, let $[\Delta(\Gamma)] \in \Scr{C}_0(\Theta/R)$ such that
$\Scr{S}_4([\Delta(\Gamma)])\,=\, 1$. By the definition of the group
$\Scr{B}(\Theta/R)$, there exists $[P] \in \RPicar{\Z}{R}^{(\G)}$
such that $\Scr{L}([P])\,=\, [\Delta(\Gamma)]$. Therefore, we obtain
a chain of $R$-bilinear isomorphisms:
$$ \Theta_x\,\,\cong\,\, \Gamma_x\,\,\cong\,\,
P\tensor{R}\Theta_x\tensor{R}P^{-1}, \text{ for all } x \in \G,$$
which means that $[P] \in \RPicar{\Z}{R}^{\G}$, and so
$[\Delta(\Gamma)]=\Scr{S}_3([P])$ which completes the proof.
\end{proof}

\begin{remark}
In the case of a unital commutative Galois extension $\Bbbk \subseteq
R$
with a finite Galois group, see for instance
\cite[p. 396]{Auslander/Goldman:1960}, 
and taking the extension $R \subseteq S$ and $\Theta$ as in Remark
\ref{rem:com}.
The homomorphism $\Scr{S}_4$ coincides, up to the isomorphism of
Proposition \ref{prop:5}, with
the homomorphism of groups defined in \cite[Theorem A 12, p.
406]{Auslander/Goldman:1960} and where the group
$\Scr{B}(\Theta/R)$ coincides with the Brauer group of the
$\Bbbk$-Azumaya algebras split by $R$.
\end{remark}

\subsection{The fourth exact sequence.}\label{subsec:4}

Consider the following subgroup of the Picard group $\Picar{R}$:
$$
\RPicar{0}{R}\,\,=\,\, \left\{ \underset{}{} [P] \in \Picar{R}|\,\,
P\,\sim\, R, \text{ as bimodules} \right\}.
$$
As we have seen in Proposition \ref{prop:1}, this is an abelian
group.
Clearly it inherits the $\G$-module structure of $\Picar{R}$: The
action is given by
$$
{}^x[P]\,=\, [\Theta_x\tensor{R}P\tensor{R}\Theta_{x^{-1}}], \text{
for every } x \in\G.
$$

\begin{lemma}\label{lema:8}
Keep the above notations. Then the map
$$
\xymatrix@R=0pt{ \Scr{C}(\Theta/R) \ar@{->}^-{\zeta}[rr] & &
Z^{1}(\G,\RPicar{0}{R})
\\ [\Delta(\Gamma)] \ar@{|->}[rr] & & \left[\underset{}{} x
\longmapsto [\Gamma_x][\Theta_{x^{-1}}] \right] }
$$ 
defines a homomorphism of groups. Furthermore, there is an exact
sequence of groups
$$
\xymatrix{ 1\ar@{->}[r] & \Scr{C}_0(\Theta/R) \ar@{->}^-{}[r] &
\Scr{C}(\Theta/R) \ar@{->}^-{\zeta}[r] & Z^{1}(\G,\RPicar{0}{R}). }
$$
\end{lemma}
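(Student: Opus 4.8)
The plan is to establish the statement in four moves: first that the assignment $[\Delta(\Gamma)] \mapsto \bigl(x \mapsto [\Gamma_x][\Theta_{x^{-1}}]\bigr)$ is well defined with values in $Z^{1}(\G,\RPicar{0}{R})$, then that $\zeta$ is multiplicative, and finally that $\ker(\zeta) = \Scr{C}_0(\Theta/R)$; the leftmost arrow is simply the subgroup inclusion of Proposition \ref{prop:4}, hence a monomorphism, which accounts for the initial $1 \to$.

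First I would check that the proposed assignment lands in the right group. Since $\Gamma_x \sim \Theta_x$ and similarity is compatible with the tensor product over $R$ (see the discussion following Lemma \ref{lema:1}), one has $\Gamma_x \tensor{R}\Theta_{x^{-1}} \sim \Theta_x \tensor{R}\Theta_{x^{-1}} \cong \Theta_1 \cong R$, so that $[\Gamma_x][\Theta_{x^{-1}}] \in \RPicar{0}{R}$; the same argument shows the $\G$-action $ {}^x[P]=[\Theta_x\tensor{R}P\tensor{R}\Theta_{x^{-1}}]$ preserves $\RPicar{0}{R}$. To verify the cocycle identity $c_{xy} = c_x\cdot {}^xc_y$ (the convention already used in Corollary \ref{coro:1}), I would expand, using the factor maps $\F^{\Theta}$ and $\F^{\Gamma}$ together with the cancellation $\Theta_{x^{-1}}\tensor{R}\Theta_x \cong R$:
\[
[\Gamma_x][\Theta_{x^{-1}}] \cdot {}^x\lr{[\Gamma_y][\Theta_{y^{-1}}]} = \bigl[\Gamma_x \tensor{R}\Theta_{x^{-1}}\tensor{R}\Theta_x\tensor{R}\Gamma_y\tensor{R}\Theta_{y^{-1}}\tensor{R}\Theta_{x^{-1}}\bigr] = \bigl[\Gamma_{xy}\tensor{R}\Theta_{(xy)^{-1}}\bigr],
\]
which is exactly $c_{xy}$. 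Well-definedness on isomorphism classes is immediate, since a morphism of generalized crossed products is graded and therefore restricts to $R$-bilinear isomorphisms $\Gamma_x \cong \Gamma'_x$ on each homogeneous component, whence $[\Gamma_x]=[\Gamma'_x]$ in $\Picar{R}$.

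For the homomorphism property I would unwind the multiplication of $\Scr{C}(\Theta/R)$ recalled in Proposition \ref{prop:4}: the product $[\Delta(\Omega)][\Delta(\Gamma)]$ has homogeneous components $\Omega_x \tensor{R}\Theta_{x^{-1}}\tensor{R}\Gamma_x$. Evaluating $\zeta$ on this product at $x$ yields $[\Omega_x \tensor{R}\Theta_{x^{-1}}\tensor{R}\Gamma_x\tensor{R}\Theta_{x^{-1}}]$, which is literally the product in $\RPicar{0}{R}$ of the two values $[\Omega_x][\Theta_{x^{-1}}]$ and $[\Gamma_x][\Theta_{x^{-1}}]$, i.e. $\zeta([\Delta(\Omega)])_x\,\zeta([\Delta(\Gamma)])_x$. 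Here $\RPicar{0}{R}$ is abelian by Proposition \ref{prop:1}, so $Z^{1}(\G,\RPicar{0}{R})$ is a group under pointwise multiplication and the above computation shows $\zeta$ is a homomorphism.

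Finally, for exactness at $\Scr{C}(\Theta/R)$ I would compute the kernel. If $\zeta([\Delta(\Gamma)])$ is the trivial cocycle, then $\Gamma_x \tensor{R}\Theta_{x^{-1}} \cong R$ for every $x$; tensoring on the right by $\Theta_x$ and using $\Theta_{x^{-1}}\tensor{R}\Theta_x \cong R$ gives $\Gamma_x \cong \Theta_x$, so $[\Delta(\Gamma)] \in \Scr{C}_0(\Theta/R)$. Conversely, if $\Gamma_x \cong \Theta_x$ for all $x$ then $[\Gamma_x][\Theta_{x^{-1}}] = [\Theta_x\tensor{R}\Theta_{x^{-1}}] = [R]$, so $\zeta([\Delta(\Gamma)])$ is trivial. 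Hence $\ker(\zeta)=\Scr{C}_0(\Theta/R)$, which together with injectivity of the inclusion is precisely the asserted exactness. None of these steps is a genuine obstacle; the only point demanding care is the bookkeeping of the left $\G$-action and the repeated insertion and cancellation of the factor-map isomorphisms in the cocycle identity, which is where an order- or inverse-placement error could most easily slip in.
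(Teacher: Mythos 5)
Your proposal is correct and follows essentially the same route as the paper: the paper declares the homomorphism property and the inclusion $\Scr{C}_0(\Theta/R)\subseteq \mathrm{Ker}(\zeta)$ to be immediate from the definitions (which is what your cocycle and multiplicativity computations spell out), and proves the reverse inclusion exactly as you do, by cancelling $\Theta_{x^{-1}}\tensor{R}\Theta_x\cong R$ to get $\Gamma_x\cong\Theta_x$.
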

\begin{proof}
The first claim is immediate from the definitions, as well as the
inclusion $\Scr{C}_0(\Theta/R) \subseteq {\rm Ker}(\zeta)$.
Conversely, given an element $[\Delta(\Gamma)] \in
\Scr{C}(\Theta/R)$ such that $\zeta([\Delta(\Gamma)])\,=\,1$.
This implies that for every $x \in\G$, there is an isomorphism of
$R$-bimodule
$\Gamma_x\tensor{R}\Theta_{x^{-1}} \,\cong\, R$. Hence, $\Gamma_x
\,\cong\, \Theta_x$, for every $x \in\G$, which means
that $[\Delta(\Gamma)] \in\Scr{C}_0(\Theta/R)$.
\end{proof}

We define another abelian group, which in the commutative Galois
case \cite{Kanzaki:1968}, coincides with the first
cohomology group of $\G$ with coefficients in $\RPicar{0}{R}$. It is
defined by the following commutative diagram
$$
\xymatrix{ \RPicar{\Z}{R}^{(\G)} \ar@{-->}[rr]
\ar@{->}_-{\Scr{L}}[rd] & & Z^{1}(\G,\RPicar{0}{R}) \ar@{->}^-{}[r]
& \bara{H}^{1}(\G,\RPicar{0}{R}) \ar@{->}[r] & 1 \\ &
\Scr{C}(\Theta/R) \ar@{->}_-{\zeta}[ru] & & & }
$$
whose  row is an exact sequence. 
Our fourth exact sequence is given by the following.

\begin{proposition}\label{prop:9}
Let $\Delta(\Theta)$ be a
generalized crossed product of $S$ with $\G$. Then there is an exact
sequence of groups
$$
\xymatrix{  
\Scr{C}_0(\Theta/R) \ar@{->}^-{\Scr{S}_4}[r] & \Scr{B}(\Theta/R)
\ar@{->}^-{\Scr{S}_5}[r] & \bara{H}^{1}(\G,\RPicar{0}{R}). }
$$
\end{proposition}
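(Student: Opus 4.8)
The plan is to make both maps completely explicit and then run a short diagram chase whose engine is the identification of $\Scr{C}_0(\Theta/R)$ with $\mathrm{Ker}(\zeta)$ from Lemma \ref{lema:8}. First I would fix the quotient homomorphism $q : \Scr{C}(\Theta/R) \to \Scr{B}(\Theta/R)$ coming from the defining sequence \eqref{Eq:B}, so that $\Scr{S}_4$ is precisely the restriction of $q$ to the subgroup $\Scr{C}_0(\Theta/R)$. For $\Scr{S}_5$, I would define it as the homomorphism induced on $\Scr{B}(\Theta/R)$ by the composite $\Scr{C}(\Theta/R) \overset{\zeta}{\longrightarrow} Z^{1}(\G,\RPicar{0}{R}) \longrightarrow \bara{H}^{1}(\G,\RPicar{0}{R})$, where $\zeta$ is the homomorphism of Lemma \ref{lema:8} and the second arrow is the canonical projection from the diagram defining $\bara{H}^{1}(\G,\RPicar{0}{R})$. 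The point to verify here is that this composite annihilates $\mathrm{Im}(\Scr{L})$: by the very construction of $\bara{H}^{1}(\G,\RPicar{0}{R})$ as the cokernel of $\zeta\circ\Scr{L}$, the subgroup $\zeta(\mathrm{Im}\,\Scr{L})=\mathrm{Im}(\zeta\circ\Scr{L})$ maps to the neutral element, so the composite descends through $q$ to a well-defined homomorphism $\Scr{S}_5$.

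Next I would prove the inclusion $\mathrm{Im}(\Scr{S}_4) \subseteq \mathrm{Ker}(\Scr{S}_5)$. The sole input is the exactness assertion of Lemma \ref{lema:8}, which identifies $\Scr{C}_0(\Theta/R)$ with $\mathrm{Ker}(\zeta)$. Hence, for any class $[\Delta(\Lambda)] \in \Scr{C}_0(\Theta/R)$ one has $\zeta([\Delta(\Lambda)]) = 1$ in $Z^{1}(\G,\RPicar{0}{R})$, and therefore $\Scr{S}_5(\Scr{S}_4([\Delta(\Lambda)])) = \Scr{S}_5(q([\Delta(\Lambda)]))$ is the image of $1$ in $\bara{H}^{1}(\G,\RPicar{0}{R})$, namely the neutral element. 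This yields the easy inclusion at once, with no computation beyond invoking Lemma \ref{lema:8}.

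The substantive inclusion is $\mathrm{Ker}(\Scr{S}_5) \subseteq \mathrm{Im}(\Scr{S}_4)$. I would take $b \in \mathrm{Ker}(\Scr{S}_5)$ and lift it to some $[\Delta(\Gamma)] \in \Scr{C}(\Theta/R)$ with $q([\Delta(\Gamma)]) = b$. The hypothesis $\Scr{S}_5(b) = 1$ says exactly that $\zeta([\Delta(\Gamma)])$ lies in $\mathrm{Im}(\zeta \circ \Scr{L})$, so there is $[P] \in \RPicar{\Z}{R}^{(\G)}$ with $\zeta(\Scr{L}([P])) = \zeta([\Delta(\Gamma)])$. Since $\zeta$ is a homomorphism of groups (Lemma \ref{lema:8}), the class $[\Delta(\Gamma)]\,\Scr{L}([P])^{-1}$ lies in $\mathrm{Ker}(\zeta) = \Scr{C}_0(\Theta/R)$. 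Finally, because $\Scr{L}([P]) \in \mathrm{Im}(\Scr{L})$ is killed by $q$, I would conclude
\[
b \,=\, q([\Delta(\Gamma)]) \,=\, q\!\left( [\Delta(\Gamma)]\,\Scr{L}([P])^{-1} \right) \,=\, \Scr{S}_4\!\left( [\Delta(\Gamma)]\,\Scr{L}([P])^{-1} \right) \in \mathrm{Im}(\Scr{S}_4),
\]
which completes the chase.

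Once the two maps are pinned down, every step is formal, so I do not anticipate a genuine computational obstacle; the only delicate point is the well-definedness of $\Scr{S}_5$, that is, verifying that $\zeta$ followed by the projection to $\bara{H}^{1}(\G,\RPicar{0}{R})$ annihilates $\mathrm{Im}(\Scr{L})$. This is guaranteed by the construction of $\bara{H}^{1}(\G,\RPicar{0}{R})$ as the cokernel of $\zeta\circ\Scr{L}$ in the diagram preceding the statement, so the required compatibility is built into the definitions rather than demanding a separate argument, and the commutativity recorded in Lemma \ref{lema:7} ensures that $\Scr{S}_4$ and $\zeta$ interact consistently on the nested groups $\RPicar{\Z}{R}^{\G} \subseteq \RPicar{\Z}{R}^{(\G)}$.
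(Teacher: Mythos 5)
Your proposal is correct and follows essentially the same route as the paper: you define $\Scr{S}_5$ via the cokernel description of $\bara{H}^{1}(\G,\RPicar{0}{R})$, get the easy inclusion from the identification $\Scr{C}_0(\Theta/R)=\mathrm{Ker}(\zeta)$ in Lemma \ref{lema:8}, and for the converse you use the very same corrected element $[\Delta(\Gamma)]\,\Scr{L}([P])^{-1}$ that the paper uses. No gaps.
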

\begin{proof}
First we need to define $\Scr{S}_5$. 
We construct it as the map which completes the commutativity of the
diagram
$$
\xymatrix{ & \Scr{C}_0(\Theta/R) \ar@{^{(}->}^-{}[d]
\ar@{->}^-{\Scr{S}_4}[rd] & & \\
\RPicar{\Z}{R}^{(\G)} \ar@{->}^-{\Scr{L}}[r] & \Scr{C}(\Theta/R)
\ar@{->}^-{\Scr{L}^c}[r] \ar@{->}_-{\zeta}[d]
& \Scr{B}(\Theta/R) \ar@{-->}^-{\Scr{S}_5}[d] \ar@{->}[r] & 1 \\ 
& Z^{1}(\G,\RPicar{0}{R}) \ar@{->}[r] &
\bara{H}^{1}(\G,\RPicar{0}{R}) \ar@{->}[r] & 1, }
$$
whose first row is exact. 

The fact that ${\rm Im}(\Scr{S}_4) \subseteq {\rm Ker}(\Scr{S}_5) $
is easily deduced from the previous
diagram and Lemma \ref{lema:8}. 
Conversely, let $\Xi\,=\,\Scr{L}^c([\Delta(\Gamma)])$, for some
$[\Delta(\Gamma)] \in \Scr{C}(\Theta/R)$,
be an element in $\Scr{B}(\Theta/R)$ (here $f^c$ denotes the
cokernel map of $f$),
such that $\Scr{S}_5(\Xi)\,=\,\Scr{S}_5\circ
\Scr{L}^c([\Delta(\Gamma)])\,=\, 1$. Then $(\zeta\Scr{L})^c \circ
\zeta([\Delta(\Gamma)])\,=\,1$,
which implies that $\zeta([\Delta(\Gamma)]) \in
\zeta\lr{\Scr{L}(\RPicar{\Z}{R}^{(\G)})}$. Therefore, there exists
an element
$[P] \in \RPicar{\Z}{R}^{(\G)}$ such that
$[\Delta(\Gamma)]\Scr{L}([P])^{-1} \in {\rm Ker}(\zeta)\,=\,
\Scr{C}_0(\Theta/R)$ by Lemma
\ref{lema:8}. Now, we have 
\begin{eqnarray*}
\Scr{S}_4\lr{[\Delta(\Gamma)]\Scr{L}([P])^{-1}} &=&
\Scr{S}_4\lr{[\Delta(\Gamma)]\Scr{L}([P^{-1}])}
\\ &=& \Scr{L}^c([\Delta(\Gamma)])\Scr{L}^c\Scr{L}([P^{-1}]) \\ &=&
\Scr{L}^c([\Delta(\Gamma)]) \,\,=\,\, \Xi,
\end{eqnarray*}
and this shows that $\Xi \in {\rm Im}(\Scr{S}_4)$.
\end{proof}

\subsection{The fifth exact sequence and the main
Theorem.}\label{subsec:5}

Keep from Subsection \ref{subsec:4} the definition of the group
$\RPicar{0}{R}$ with its structure of $\G$-module.
Before stating the fifth sequence, we will need first to give a
homomorphism of group from
the group $\bara{H}^1(\G,\RPicar{0}{R})$ to the third cohomology
group $H_{\Theta}^3(\G, \mathcal{U}(\Z))$.

Given a normalized $1$-cocycle $g \in Z^1(\G,\RPicar{0}{R})$ and put
$g_x\,=\, [\nabla_x]$. Then, for every pair of elements $x, y
\in\G$, one
can easily shows that 
\begin{equation}\label{Eq:nabla}
{}^xg_y\,[\Theta_x]\,\,=\,\, [\Theta_x] \, g_y.
\end{equation}
Now, for every $x \in\G$, we set 
$$ [U_x] \,\,:=\,\, g_x [\Theta_x], \text{ in } \Picar{R}.$$
By the cocycle condition, we obtain 
\begin{eqnarray*}
[U_{xy}] &=& g_{xy} [\Theta_{xy}] \\ &=& g_x
{}^xg_y[\Theta_x][\Theta_y] \\ &\overset{\eqref{Eq:nabla}}{=}&
g_x [\Theta_x]\,  g_y[\Theta_y] \\ &=& [U_x][U_y].
\end{eqnarray*}
This means that there are $R$-bilinear isomorphisms 
\begin{equation}\label{Eq:Fg}
\F^g_{x,y} : U_x\tensor{R}U_y \longrightarrow U_{xy}, 
\end{equation}
with $\F^g_{1,x}\,=\, id\,=\, \F^g_{x,1}$ for every $x \in\G$. By
Proposition \ref{prop:3}, we have a $3$-cocycle
in $Z_{\Theta}^{3}(\G, \mathcal{U}(\Z))$ attached to these maps
$\F^{g}_{-,-}$, which we denote by $\alpha^{g}_{-,-,-}$.
If there is another class $[V_x] \in \Picar{R}$ such that
$[V_x]\,=\, g_x [\Theta_x]$, for every $x \in\G$.
Then the families of invertible $R$-bimodules $\{V_x\}_{x\, \in\G}$
and $\{U_x\}_{x\, \in\G}$
satisfy the conditions of Proposition \ref{prop:31}. Therefore, the
associated $3$-cocycles are cohomologous.
This means that the correspondence $g \mapsto [\alpha^{g}_{-,-,-}]
\in H_{\Theta}^3(\G, \mathcal{U}(\Z))$ is a well defined map.
Henceforth,  we have a well defined homomorphism of groups 
\begin{equation}\label{Eq:alphag}
\xymatrix@R=0pt{ Z^{1}(\G,\RPicar{0}{R})
\ar@{->}^-{\Scr{S}_{13}}[rr] & & H_{\Theta}^3(\G, \mathcal{U}(\Z)) \\ g
\ar@{|->}[rr] & & [\alpha^{g}_{-,-,-}]. } 
\end{equation}

The proof of the fact that $\Scr{S}_{13}$ is a multiplicative map
uses Lemma \ref{lema:1}(ii) and the twisted
natural transformation of Proposition \ref{prop:1}. Completes and
detailed steps of this proof are omitted.

\begin{proposition}\label{prop:10}
Let $R \subseteq S$ be an extension of rings with the same set of
local units, and $\Delta(\Theta)$ a
generalized crossed product of $S$ with $\G$. Then the homomorphism
of equation \eqref{Eq:alphag} satisfies
$\Scr{S}_{13} \circ \zeta \,\,=\,\,[1]$, where $\zeta$ is the
homomorphism of Lemma \ref{lema:8}.
Furthermore, there is a commutative diagram of groups
$$
\xymatrix@R=35pt@C=50pt{ \RPicar{\Z}{R}^{(\G)}
\ar@{->}^-{(\zeta\circ \Scr{L})}[r] &
Z^{1}(\G,\RPicar{0}{R}) \ar@{->}_-{\Scr{S}_{13}}[d] \ar@{->}[r] &
\bara{H}^1(\G,\RPicar{0}{R})
\ar@{-->}^-{\Scr{S}_6}[ld] \ar@{->}[r] & 1
\\    &  H_{\Theta}^3(\G, \mathcal{U}(\Z))  & &   }
$$
with exact row. 
\end{proposition}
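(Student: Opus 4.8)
The plan is to deduce the identity $\Scr{S}_{13} \circ \zeta = [1]$ from the mere associativity of a generalized crossed product, and then to read off the map $\Scr{S}_6$ and the commutative triangle formally from the universal property of the cokernel. First I would take an arbitrary class $[\Delta(\Gamma)] \in \Scr{C}(\Theta/R)$ and put $g := \zeta([\Delta(\Gamma)])$, so that $g_x = [\Gamma_x][\Theta_{x^{-1}}]$ by Lemma \ref{lema:8}. Following the construction of $\Scr{S}_{13}$ I would then compute the classes $[U_x] = g_x[\Theta_x]$ occurring there. Since $\bara{\Theta}$ is a homomorphism of groups into $\Inv{R}{S}$, the multiplication of $S$ yields $\Theta_{x^{-1}} \tensor{R} \Theta_x \cong \Theta_{x^{-1}}\Theta_x = R$, so $[\Theta_{x^{-1}}][\Theta_x] = [R]$ in $\Picar{R}$, and therefore $[U_x] = [\Gamma_x][\Theta_{x^{-1}}][\Theta_x] = [\Gamma_x]$ for every $x \in \G$.

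The key point is that the class $[\alpha^g_{-,-,-}] \in H_{\Theta}^3(\G, \mathcal{U}(\Z))$ entering the definition of $\Scr{S}_{13}$ does not depend on the chosen representatives of the classes $g_x[\Theta_x]$; this is exactly the content of Proposition \ref{prop:31}. I would therefore compute it with the convenient representatives $U_x = \Gamma_x$, equipped with the factor maps $\F^{\Gamma}_{x,y}$ of $\Delta(\Gamma)$ itself --- a legitimate choice since $\Gamma_x \sim \Theta_x$. Because $\Delta(\Gamma)$ is an associative ring, its factor maps satisfy the associativity constraint, i.e. diagram \eqref{asociativa} with $\Gamma$ in place of $\Theta$ commutes; comparing this with equation \eqref{Eq:ass} forces $\alpha^g_{x,y,z} = \mathrm{id}_{\Gamma_{xyz}}$, whence $\widetilde{\alpha^g_{x,y,z}} = 1$ in $\mathcal{U}(\Z)$. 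Thus the $3$-cocycle attached to $g$ is the trivial one, so $\Scr{S}_{13}(\zeta([\Delta(\Gamma)])) = [1]$, which establishes $\Scr{S}_{13} \circ \zeta = [1]$.

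For the diagram, associativity of composition gives $\Scr{S}_{13} \circ (\zeta \circ \Scr{L}) = (\Scr{S}_{13} \circ \zeta) \circ \Scr{L} = [1]$, so $\Scr{S}_{13}$ vanishes on the image of $\zeta \circ \Scr{L}$. Since $\bara{H}^1(\G, \RPicar{0}{R})$ was defined in Subsection \ref{subsec:4} precisely as the cokernel of $\zeta \circ \Scr{L}$ (and all the groups involved are abelian), the universal property of the cokernel yields a unique homomorphism $\Scr{S}_6 : \bara{H}^1(\G, \RPicar{0}{R}) \to H_{\Theta}^3(\G, \mathcal{U}(\Z))$ with $\Scr{S}_6 \circ q = \Scr{S}_{13}$, where $q$ is the canonical projection; this is the dashed arrow making the triangle commute, and the top row is exact at $\bara{H}^1(\G, \RPicar{0}{R})$ by construction. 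I expect the main obstacle to be not the diagram chase, which is purely formal, but the honest identification of the $3$-cocycle: one must be certain that passing to the representatives $\Gamma_x$ with their own factor maps leaves the cohomology class unchanged (this is where Proposition \ref{prop:31} is indispensable) and that the associativity of $\Delta(\Gamma)$ is precisely the triviality of $\alpha^g_{-,-,-}$.
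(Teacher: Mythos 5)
Your proposal is correct and follows essentially the same route as the paper's proof: both identify $g_x[\Theta_x]=[\Gamma_x]$, choose the $\Gamma_x$ with the factor maps $\F^{\Gamma}_{-,-}$ of $\Delta(\Gamma)$ as representatives (licensed by Proposition \ref{prop:31}), and observe that associativity of $\Delta(\Gamma)$ makes the resulting $3$-cocycle trivial. Your explicit appeal to the universal property of the cokernel for the construction of $\Scr{S}_6$ merely fills in the step the paper dismisses as ``now clear.''
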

\begin{proof}
We need to check that $\Scr{S}_{13} \circ \zeta([\Delta(\Gamma)])
\,\,=\,\,[1]$, for any element
$[\Delta(\Gamma)] \in \Scr{C}(\Theta/R)$. Set
$f\,=\,\zeta([\Delta(\Gamma)])$, so we have
$f_x[\Theta_x]\,=\,[\Gamma_x]$, for every $x \in \G$. Thus, the
corresponding maps
$\F^f_{-.-}$ of equation \eqref{Eq:Fg}, are exactly given by the
factors maps
$\F^{\Gamma}_{-,-}$ of $\Delta(\Gamma)$ relative to $\Gamma$. They
define an associative multiplication
on $\Delta(\Gamma)$, and so induce a trivial $3$-cocycle, which
means that $\Scr{S}_{13} (f)\,=\,[1]$.
The last statement is now clear.
\end{proof}

At this level we are ending up with the following commutative
diagram
$$
\xymatrix@R=20pt@C=40pt{ & \RPicar{\Z}{R}^{\G} \ar@{^{(}->}^-{}[d]
\ar@{->}^-{\Scr{S}_3}[r] & \Scr{C}_0(\Theta/R) \ar@{^{(}->}^-{}[d]
\ar@{->}^-{\Scr{S}_4}[rd] & & \\
1 \ar@{->}[r] & \RPicar{\Z}{R}^{(\G)} \ar@{=}[d]
\ar@{->}^-{\Scr{L}}[r] & \Scr{C}(\Theta/R)
\ar@{->}_{\zeta}[d] \ar@{->}^-{\Scr{L}^c}[r] & \Scr{B}(\Theta/R)
\ar@{->}[r] \ar@{->}^-{\Scr{S}_5}[d] & 1
\\ & \RPicar{\Z}{R}^{(\G)}  \ar@{->}^-{(\zeta\circ \Scr{L})}[r] & 
Z^{1}(\G,\RPicar{0}{R}) \ar@{->}^-{\Scr{S}_{13}}[d]
\ar@{->}^-{(\zeta\circ \Scr{L})^c}[r] & \bara{H}^1(\G,\RPicar{0}{R})
\ar@{->}^-{\Scr{S}_6}[ld] \ar@{->}[r] & 1
\\  &  &  H_{\Theta}^3(\G, \mathcal{U}(\Z))  &  &   }
$$

\begin{proposition}\label{prop:11}
Let  $\Delta(\Theta)$ be a
generalized crossed product of $S$ with $\G$. Then there is an exact
sequence of groups
$$
\xymatrix{ \Scr{B}(\Theta/R) \ar@{->}^-{\Scr{S}_5}[r] &
\bara{H}^1(\G,\RPicar{0}{R}) \ar@{->}^-{\Scr{S}_6}[r] &
H_{\Theta}^3(\G, \mathcal{U}(\Z)). }
$$
\end{proposition}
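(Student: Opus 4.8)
The plan is to establish exactness at $\bara{H}^1(\G,\RPicar{0}{R})$ by a diagram chase through the large commutative diagram preceding the statement, exploiting the three structural identities it encodes: $\Scr{S}_5\circ\Scr{L}^c=(\zeta\circ\Scr{L})^c\circ\zeta$, $\Scr{S}_6\circ(\zeta\circ\Scr{L})^c=\Scr{S}_{13}$, and $\Scr{S}_{13}\circ\zeta=[1]$ from Proposition \ref{prop:10}. Here $\Scr{L}^c:\Scr{C}(\Theta/R)\to\Scr{B}(\Theta/R)$ and $(\zeta\circ\Scr{L})^c:Z^1(\G,\RPicar{0}{R})\to\bara{H}^1(\G,\RPicar{0}{R})$ denote the two cokernel (hence surjective) maps. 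For the inclusion ${\rm Im}(\Scr{S}_5)\subseteq{\rm Ker}(\Scr{S}_6)$ I would simply compose: $\Scr{S}_6\circ\Scr{S}_5\circ\Scr{L}^c=\Scr{S}_6\circ(\zeta\circ\Scr{L})^c\circ\zeta=\Scr{S}_{13}\circ\zeta=[1]$, and since $\Scr{L}^c$ is surjective this forces $\Scr{S}_6\circ\Scr{S}_5=[1]$.

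The substantive inclusion is ${\rm Ker}(\Scr{S}_6)\subseteq{\rm Im}(\Scr{S}_5)$. Given $\xi\in\bara{H}^1(\G,\RPicar{0}{R})$ with $\Scr{S}_6(\xi)=[1]$, I would first lift $\xi$ along the surjection $(\zeta\circ\Scr{L})^c$ to a normalized representative $1$-cocycle $g\in Z^1(\G,\RPicar{0}{R})$, so that $\Scr{S}_{13}(g)=\Scr{S}_6\big((\zeta\circ\Scr{L})^c(g)\big)=\Scr{S}_6(\xi)=[1]$. By the very construction of $\Scr{S}_{13}$ in \eqref{Eq:alphag}, this says that the $3$-cocycle $\alpha^g_{-,-,-}\in Z_{\Theta}^3(\G,\mathcal{U}(\Z))$ attached to the maps $\F^g_{-,-}$ of \eqref{Eq:Fg} is cohomologically trivial. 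Writing $g_x=[\nabla_x]$ and $U_x=g_x[\Theta_x]$, the family $\{\F^g_{x,y}\}$ realizes the homomorphism $\underline{U}:\G\to\Picar{R}$, $x\mapsto[U_x]$, and satisfies the unitary diagrams \eqref{Eq:unitario} because $\F^g_{1,x}=id=\F^g_{x,1}$. I would then apply Proposition \ref{prop:3}: the triviality of $\widetilde{\alpha^g}$ yields a normalized $\sigma:\G\times\G\to\mathcal{U}(\Z)$ correcting $\{\F^g_{x,y}\}$ to a genuine factor map $\{\bara{\F}^g_{x,y}\}$ for $\underline{U}$, and hence a generalized crossed product $\Delta(U)=\bigoplus_{x\in\G}U_x$.

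It remains to locate $\Delta(U)$ in $\Scr{C}(\Theta/R)$ and to compute its image. Since $\nabla_x\sim R$ as bimodules and the similarity relation is compatible with $\tensor{R}$, one has $U_x\cong\nabla_x\tensor{R}\Theta_x\sim\Theta_x$ for every $x$, so $[\Delta(U)]\in\Scr{C}(\Theta/R)$. Evaluating $\zeta$ from Lemma \ref{lema:8} gives $\zeta([\Delta(U)])_x=[U_x][\Theta_{x^{-1}}]=g_x[\Theta_x][\Theta_{x^{-1}}]=g_x[R]=g_x$, i.e. $\zeta([\Delta(U)])=g$. Consequently $\Scr{S}_5\big(\Scr{L}^c([\Delta(U)])\big)=(\zeta\circ\Scr{L})^c\big(\zeta([\Delta(U)])\big)=(\zeta\circ\Scr{L})^c(g)=\xi$, so $\xi\in{\rm Im}(\Scr{S}_5)$, completing the chase. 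I expect the main obstacle to be the clean invocation of Proposition \ref{prop:3} — converting the mere vanishing of $[\alpha^g]$ in $H_{\Theta}^3(\G,\mathcal{U}(\Z))$ into an honest factor map, and thus a crossed product whose homogeneous components are exactly the prescribed $U_x$ — together with checking that these components remain similar to the $\Theta_x$ so that the resulting class genuinely lies in $\Scr{C}(\Theta/R)$; the final identity $\zeta([\Delta(U)])=g$ is then a direct computation.
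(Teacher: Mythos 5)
Your proposal is correct and follows essentially the same route as the paper: both directions are the same diagram chase, and for the substantive inclusion the paper likewise lifts the class to a $1$-cocycle $h$, forms $\Omega_x$ with $[\Omega_x]=h_x[\Theta_x]$, and corrects the maps $\F^h_{x,y}$ by a normalized $2$-cochain $\sigma$ trivializing the associated $3$-cocycle to obtain a genuine factor map with $\zeta([\Delta(\Omega)])=h$. The only cosmetic difference is that you invoke the last statement of Proposition \ref{prop:3} to produce the corrected factor map, while the paper writes the correction $\F^{\Omega}_{x,y}=\sigma_{x,y}(e)\F^{h}_{x,y}$ out explicitly; the content is identical.
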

\begin{proof}
It is clear from the above diagram that ${\rm Im}(\Scr{S}_5)
\subseteq {\rm Ker}(\Scr{S}_6)$
since by Proposition \ref{prop:10} $\Scr{S}_{13} \circ \zeta\,=\,
[1]$. Conversely, let us consider
a class $[h] \in \bara{H}^1(\G,\RPicar{0}{R})$ such that
$\Scr{S}_6([h])\,=\, 1$, for some element
$h \in Z^{1}(\G,\RPicar{0}{R})$. By the same diagram, 
we also have that the class $\Scr{S}_{13}(h) \,=\, [1] \in H_{\Theta}^3(\G,
\mathcal{U}(\Z))$.
This says that $\Scr{S}_{13}(h)\,=\,[\beta]$, for some $ \beta \in
B_{\Theta}^2(\G, \mathcal{U}(\Z))$,
and so there exists $\sigma_{-,-} : \G\times\G \to \mathcal{U}(\Z)$
such that
$$ \beta_{x,y,z}\,\,=\,\, \sigma_{x,yz}\, {}^x\sigma_{y,z}\,
\sigma_{x,y}^{-1}\, \sigma_{xy,z}^{-1},$$
for every $x, y,, z \in\G$. Now, for each $x \in\G$, we put
$h_x[\Theta_x]\,=\, [\Omega_x]$, and consider
$\F^{h}_{-,-}$ the associated family of maps as in equation
\eqref{Eq:Fg},
$\F^{h}_{x,y}: \Omega_x\tensor{R}\Omega_y \to \Omega_{xy}$, where
$x, y \in\G$. By definition
the $\beta_{-,-,-}$'s satisfy 
$$ \beta_{x,y,z} \circ \F^{h}_{x,yz} \circ
\lr{\F^{h}_{x,y}\tensor{R}\Omega_z}\,\,=\,\, \F^{h}_{xy,z}
\circ \lr{\Omega_x\tensor{R}\F^{h}_{y,z}},$$
for every $x, y,, z \in\G$. In this way, there are factor maps
$\F^{\Omega}_{-,-}$ relative to $\Omega$, defined by
$$
\xymatrix@R=0pt{ \F^{\Omega}_{x,y}: \Omega_x \tensor{R}\Omega_y
\ar@{->}[rr] & & \Omega_{xy} \\ \omega_x\tensor{R}\omega_y
\ar@{|->}[rr] & &
\sigma_{x,y}(e) \F^{h}_{x,y}(\omega_x\tensor{R}\omega_y), }
$$
where $e \in Unit\{\omega_x,\,\omega_y\}$. A routine computation
shows that $\Delta(\Omega)$ is actually a generalized crossed
product
of $S$ with $\G$ with factor maps $\F^{\Omega}_{-,-}$ relative to
$\Omega$.

On the other hand, since for each $x \in \G$, $h_x \in
\RPicar{0}{R}$, we have $\Omega_x \, \sim \, \Theta_x$. Thus
$[\Delta(\Omega)] \in
\Scr{C}(\Theta/R)$  with $\zeta([\Delta(\Omega)])\,=\, h$. Whence 
$$(\zeta\Scr{L})^c(h)\,=\, [h]\,=\, (\zeta\Scr{L})^c\circ \zeta
([\Delta(\Omega)])\,=\, \Scr{S}_5(\Scr{L}^c([\Delta(\Omega)])),$$
which implies that $[h]\in {\rm Im}(\Scr{S}_5)$, and this completes
the proof.
\end{proof}

The following diagram summarizes the information we have show so
far.

$$
\xymatrix@R=20pt@C=30pt{ H_{\Theta}^1(\G, \mathcal{U}(\Z))
\ar@{->}^-{\Scr{S}_1}[r] & \Scr{P}_{\Z}(\Delta/R)
\ar@{->}^-{\Scr{S}_2}[r] &
\RPicar{\Z}{R}^{\G} \ar@{^{(}->}^-{}[d] \ar@{->}^-{\Scr{S}_3}[r] &
H_{\Theta}^2(\G, \mathcal{U}(\Z)) \ar@{^{(}->}^-{}[d]
\ar@{->}^-{\Scr{S}_4}[rd] & \\
& & \RPicar{\Z}{R}^{(\G)} \ar@{=}[d] \ar@{->}^-{\Scr{L}}[r] &
\Scr{C}(\Theta/R)
\ar@{->}_{\zeta}[d] \ar@{->}^-{\Scr{L}^c}[r] & \Scr{B}(\Theta/R)
\ar@{->}^-{\Scr{S}_5}[d]
\\ & & \RPicar{\Z}{R}^{(\G)}  \ar@{->}^-{(\zeta\circ \Scr{L})}[r] &Z^{1}(\G,\RPicar{0}{R}) \ar@{->}^-{\Scr{S}_{13}}[d]
\ar@{->}^-{(\zeta\circ \Scr{L})^c}[r] & \bara{H}^1(\G,\RPicar{0}{R})
\ar@{->}^-{\Scr{S}_6}[ld]
\\ &  &  &  H_{\Theta}^3(\G, \mathcal{U}(\Z))  &     }
$$

We are now in  position to announce our main result.

\begin{theorem}\label{them}
Let $R \subseteq S$ be an extension of rings with the same set of
local units, and $\Delta(\Theta)$ a
generalized crossed product of $S$ with a group $\G$. Then there is
an exact sequence of groups
\begin{footnotesize}
$$
\xymatrix{1 \ar@{->}[r] & 
H_{\Theta}^1(\G, \, \mathcal{U}(\Z)) \ar@{->}^-{\Scr{S}_1}[r] &
\Scr{P}_{\Z}(\Delta/R)^{\G} \ar@{->}^-{\Scr{S}_2}[r] &
\RPicar{\Z}{R}^{\G} \ar@{->}^-{\Scr{S}_3}[r] &
H_{\Theta}^{2}(\G,\mathcal{U}(\Z))
   \ar@{->}^-{\Scr{S}_4}[r]  & \Scr{B}(\Theta/R)   
\\ & & \ar@{->}^-{\Scr{S}_5}[r] & \bara{H}^1(\G,\RPicar{0}{R})
\ar@{->}^-{\Scr{S}_6}[r] &
H_{\Theta}^3(\G, \mathcal{U}(\Z)). &   }
$$ 
\end{footnotesize}

\end{theorem}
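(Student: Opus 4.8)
The plan is to obtain the seven-term sequence purely by concatenating the partial exact sequences already established, after transporting the term $\Scr{C}_0(\Theta/R)$ to $H_{\Theta}^2(\G,\mathcal{U}(\Z))$ via the isomorphism $\zeta$ of Proposition \ref{prop:5}. No genuinely new computation is needed: every internal node of the long sequence occurs as the \emph{middle} term of one of the three-term sequences of Corollary \ref{coro:1} and Propositions \ref{prop:7}, \ref{prop:8}, \ref{prop:9} and \ref{prop:11}, each of which already asserts full exactness (both inclusions ${\rm Im}\subseteq {\rm Ker}$ and ${\rm Ker}\subseteq {\rm Im}$) at that node.

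Explicitly, I would read off injectivity of $\Scr{S}_1$ together with exactness at $\Scr{P}_{\Z}(\Delta/R)^{\G}$ from Corollary \ref{coro:1}; exactness at $\RPicar{\Z}{R}^{\G}$, that is ${\rm Im}(\Scr{S}_2)={\rm Ker}(\Scr{S}_3)$, from Proposition \ref{prop:7}; exactness at $\Scr{C}_0(\Theta/R)$, that is ${\rm Im}(\Scr{S}_3)={\rm Ker}(\Scr{S}_4)$, from Proposition \ref{prop:8}; exactness at $\Scr{B}(\Theta/R)$ from Proposition \ref{prop:9}; and exactness at $\bara{H}^1(\G,\RPicar{0}{R})$ from Proposition \ref{prop:11}. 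Since $\Scr{S}_2$ is, in both Corollary \ref{coro:1} and Proposition \ref{prop:7}, the very same restriction of $\Scr{O}_l$, and likewise $\Scr{S}_3,\Scr{S}_4,\Scr{S}_5$ are each defined once and for all through Lemma \ref{lema:7}, the sequence \eqref{Eq:B} and the diagram in the proof of Proposition \ref{prop:9}, the matching maps in consecutive three-term sequences coincide. Hence the identities ${\rm Im}(\Scr{S}_i)={\rm Ker}(\Scr{S}_{i+1})$ chain together to yield exactness of the whole sequence; the large commutative diagram displayed just before the statement is precisely the record of these identifications.

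Finally, to pass from $\Scr{C}_0(\Theta/R)$ to $H_{\Theta}^2(\G,\mathcal{U}(\Z))$ I would replace $\Scr{S}_3$ by $\zeta\circ\Scr{S}_3$ and $\Scr{S}_4$ by $\Scr{S}_4\circ\zeta^{-1}$; as $\zeta$ is an isomorphism of groups by Proposition \ref{prop:5}, exactness at this middle term and at its two neighbours is unaffected. The only obstacle is therefore bookkeeping rather than substance: one must confirm that every node of the displayed sequence has had its exactness accounted for by exactly one of the cited results, and that the left-hand end carries the injectivity $1\to H_{\Theta}^1(\G,\mathcal{U}(\Z))$ supplied by Corollary \ref{coro:1}. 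Once this is checked, the theorem follows by assembling the pieces.
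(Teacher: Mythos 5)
Your proposal is correct and follows exactly the paper's own argument: the theorem is proved there as a direct consequence of Corollary \ref{coro:1} and Propositions \ref{prop:7}, \ref{prop:8}, \ref{prop:9} and \ref{prop:11}, with $\Scr{C}_0(\Theta/R)$ identified with $H_{\Theta}^{2}(\G,\mathcal{U}(\Z))$ via Proposition \ref{prop:5}. Your additional bookkeeping --- checking that the overlapping maps in consecutive three-term sequences coincide and that conjugating by the isomorphism $\zeta$ preserves exactness --- is exactly what the paper leaves implicit.
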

\begin{proof}
This is a direct consequence of Corollary \ref{coro:1} and
Propositions
\ref{prop:7}, \ref{prop:8},  \ref{prop:9}, and \ref{prop:11}.
\end{proof}

\end{document}